\def\be#1{\begin{equation}\label{#1}}
\def\bas{\begin{align*}}
\def\eas{\end{align*}}
\def\bi{\begin{itemize}}
\def\ei{\end{itemize}}
\theoremstyle{plain}
   \newtheorem{theorem}[subsection]{Theorem}
   \newtheorem{proposition}[subsection]{Proposition}
   \newtheorem{lemma}[subsection]{Lemma}
   \newtheorem{corollary}[subsection]{Corollary}
\begin{document}

\author{James Wright}
\address{Maxwell Institute of Mathematical Sciences and the School of Mathematics, University of
Edinburgh, JCMB, The King's Buildings, Peter Guthrie Tait Road, Edinburgh, EH9 3FD, Scotland}
\email{j.r.wright@ed.ac.uk}

\subjclass{42B20}



\title[A theory of complex oscillatory integrals]{A theory of complex oscillatory integrals: \\ A case study}

\maketitle

\begin{abstract}
In this paper we develop a theory for oscillatory integrals with complex phases. When
$f:{\mathbb C}^n \to {\mathbb C}$, we evaluate this phase function on the basic
character ${\rm e}(z) := e^{2\pi i x} e^{2\pi i y}$ of ${\mathbb C} \simeq {\mathbb R}^2$
(here $z = x+iy \in {\mathbb C}$ or $z = (x,y) \in {\mathbb R}^2$) and consider
oscillatory integrals of the form
$$
I \ = \ \int_{{\mathbb C}^n} {\rm e}(f({\underline{z}})) \, \phi({\underline{z}}) \, d{\underline{z}} 
$$
where $\phi \in C^{\infty}_c({\mathbb C}^n)$.

Unfortunately basic scale-invariant bounds for the oscillatory integrals $I$ do not hold in
the generality that they do in the real setting. Our main effort is to develop a perspective and arguments
to locate scale-invariant bounds in (necessarily) less generality than we are accustomed to in the real setting.
\end{abstract}

\section{Introduction}\label{introduction}

Oscillatory integral estimates known as van der Corput estimates are very useful
in a wide range of areas.
It states that for every integer $k\ge 1$, there is a constant $C_k$ such that whenever $f : [a,b] \to {\mathbb R}$
is a smooth function satisfying $|f^{(k)}|\ge 1$ on $[a,b]$, then
\begin{equation}\label{vc-real}
\bigl| \int_a^b e^{2\pi i \lambda f(x)} \, dx \bigr| \ \le \ C_k \ |\lambda|^{-1/k}.
\end{equation}
Here $\lambda \in {\mathbb R}$ is a real parameter. When $k=1$ a monotonicity condition is needed on $f'$.
The example $f(x) = x^k$ shows the sharpness of the exponent $1/k$. 

The usefulness
of these estimates lies in the uniformity of the constant $C_k$ which depends only on $k$. As a consequence
the quantitative hypothesis $|f^{(k)}(x)|\ge 1$ for $x \in [a,b]$ can be relaxed to $|f^{(k)}(x)|\ge \mu$
for $x \in [a,b]$ and the estimate {\it scales} accordingly. Applying \eqref{vc-real} to $g = \mu^{-1} f$
(so that $|g^{(k)}|\ge 1$ holds on $[a,b]$), we have
\begin{equation}\label{vc-real-scaled}
\bigl| \int_a^b e^{2\pi i \lambda f(x)} \, dx \bigr| \ \le \ C_k \ |\mu \lambda|^{-1/k},
\end{equation}
a conclusion we would not be able to deduce if the constant $C_k$ in \eqref{vc-real} depended on the phase $f$.
For lack of better terminology, we will call the bound in \eqref{vc-real} or \eqref{vc-real-scaled}
a {\it scale-invariant} bound. See \cite{Stein-beijing}
or \cite{Stein-big} for further details.

The situation changes when we move from the real field ${\mathbb R}$ to the complex field ${\mathbb C}$
and consider complex differentiable phases $f : D \to {\mathbb C}$ where $D$
is some domain in ${\mathbb C}^n$. Here we consider oscillatory integrals of the form
\begin{equation}\label{complex-osc-form}
I_{\phi}(f) \ = \ \int_{{\mathbb C}^n} {\rm e}(f({\underline{z}})) \, \phi({\underline{z}}) \, d{\underline{z}}
\end{equation}
where $\phi \in C^{\infty}_c({\mathbb C}^n)$ is a smooth cut-off function and ${\rm e}(z) = e^{2\pi i x}e^{2\pi i y}$
with $z = x + iy \in {\mathbb C}$ is the basic character on the locally compact abelian group ${\mathbb C} \simeq {\mathbb R}^2$.

The study of these oscillatory integrals is not
to be confused with the beautiful theory of complex oscillatory integrals defined by evaluating
the above complex phase $f$ on the basic character $t \to e^{2\pi i t}$ from ${\mathbb R}$ which
has a unique analytic extension from ${\mathbb R}$ to ${\mathbb C}$ and is very useful in developing
a geometric-invariant theory for real oscillatory integrals. See \cite{AGV}.

The oscillatory integral $I_{\phi}(f)$ is a {\it real} oscillatory integral with the real phase ${\rm Re}(f) + {\rm Im}(f)$.
It would be nice if we had complex versions of van der Corput estimates where scale-invariant bounds for $I_{\phi}(f)$
are derived from a condition that some complex derivative of $f$ is bounded below. We would then be able to apply
such estimates to the fourier transform ${\widehat{\sigma}}$ of measures $\sigma$ in ${\mathbb C}^n$
since we can write
$$
{\widehat{\sigma}}(T {\underline{w}}) \ = \ \int_{{\mathbb C}^n} {\rm e}(\langle {\underline{w}}, {\underline{z}} \rangle ) \,
d \sigma ({\underline{z}})
$$
for some sympletic (and hence measure-preserving) transformation $T$ on ${\mathbb C}^n$. Here
$\langle {\underline{w}}, {\underline{z}} \rangle = w_1 z_1 + \cdots + w_n z_n$. Therefore questions
regarding $L^p$ norms of ${\widehat{\sigma}}$ (which arise in the fourier restriction problem for example) can then
be investigated using complex van der Corput estimates. See Section \ref{Fourier-transform} for details.

Unfortunately the complex analogue of \eqref{vc-real} does not hold. When $n = 1$, suppose that $f$ is a complex differentiable
function satisfying $|f^{(k)}| \ge 1$ on the unit disc ${\mathbb D} = \{z \in {\mathbb C}: |z| \le 1\}$, say. Then a scale-invariant
bound
$$
\bigl| \int_{{\mathbb C}} {\rm e}(\lambda f(z)) \, \phi(z) \, dz \bigr| \ \le \ C_{k,\phi} \, |\lambda|^{-2/k} 
$$
with a constant $C_{k, \phi}$ only depending on $k$ and $\phi \in C^{\infty}_c({\mathbb D})$ does not hold.
Considering the example $f(z) = z^k$ shows the exponent $2/k$ is optimal. 

The examples illustrating
this lack of scale-invariance are non-polynomial. For polynomials phases, we are able to adapt and extend arguments
from elementary number theory to establish scale-invariant bounds for oscillatory integrals $I_{\phi}(f)$ in \eqref{complex-osc-form}.

Fix $\phi \in C^{\infty}_c({\mathbb C}^n)$ and define 
$$
H_{f,\phi} \ = \ \inf_{{\underline{z}} \in {\rm supp}(\phi)} H_f({\underline{z}}) \ \ {\rm where} \ \ 
H_f({\underline{z}}) \ = \ \max_{|\alpha|\ge 1} \bigl(|\partial^{\alpha} f({\underline{z}})/\alpha!|^{1/|\alpha|} \bigr) .
$$
for polynomials $f \in {\mathbb C}[X_1, \ldots, X_n]$.
Notation involving complex polynomials and partial derivatives will be defined in Section \ref{prelims}.

\begin{theorem}\label{H-main-osc}
If $f \in {\mathbb C}[X_1, \ldots, X_n]$ has degree $d$, then
$$
|I_{\phi}(f)| \ \le \ C_{d,n,\phi} \ H_{f,\phi}^{-2}.
$$
Here $C_{d,n,\phi}$ depends only on $d,n$ and $\phi$.
\end{theorem}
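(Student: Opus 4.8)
\textbf{Reductions.} If $H_{f,\phi}\le 1$ there is nothing to prove, since $|I_\phi(f)|\le\|\phi\|_{L^1}\le\|\phi\|_{L^1}H_{f,\phi}^{-2}$; so assume $\lambda:=H_{f,\phi}\ge 1$. Rescaling the variables so that $\mathrm{supp}(\phi)$ lies in the unit polydisc changes $\lambda$ by a factor depending only on $d,n$ and replaces $\phi$ by controlled data. Since $\mathrm{e}(w)=e^{2\pi i\,\mathrm{Re}((1-i)w)}$, replacing $f$ by $(1-i)f$ (which changes $\lambda$ by a bounded factor) lets us write $I_\phi(f)=\int_{\mathbb C^n}e^{2\pi i g(\underline z)}\phi(\underline z)\,d\underline z$ with $g:=\mathrm{Re}(f)$ harmonic in each variable. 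By the Cauchy--Riemann equations every real partial $\partial^\beta g$ with $|\beta|\ge 1$ is $\pm\mathrm{Re}$ or $\pm\mathrm{Im}$ of a complex partial $\partial^\alpha f$ with $|\alpha|=|\beta|$; hence $|\nabla g|\asymp|\nabla_{\underline z}f|$, and at every point of $\mathrm{supp}(\phi)$ some complex partial obeys $|\partial^\alpha f/\alpha!|\ge\lambda^{|\alpha|}$. Now split $\mathrm{supp}(\phi)$ into the set $W$ where $\max_j|\partial_{z_j}f|\ge\lambda$ and its complement $E$.

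\textbf{The sublevel-set region.} On $E$ every first-order partial has modulus $<\lambda$, so the witnesses of $H_f\ge\lambda$ have order $m\ge 2$; taking the smallest such $m$, all partials of orders $<m$ are bounded by $C_{d,n}\lambda^{(\text{order})}$ while some partial $\partial^\alpha f$ with $|\alpha|=m$ satisfies $|\partial^\alpha f|\ge\lambda^m$. Writing $\partial^\alpha=\partial_{z_j}\partial^\beta$ with $|\beta|=m-1$ shows $E$ is covered by $O_{d,n}(1)$ sets of the form $\{\,|\partial^\beta f|<C_{d,n}\lambda^{m-1}\,\}\cap\{\,|\nabla(\partial^\beta f)|\ge\lambda^{m}\,\}$. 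The key lemma is scale-invariant: if $h$ is a polynomial of degree $\le D$ on the unit polydisc of $\mathbb C^n$, then $|\{\,|h|<A,\ |\nabla h|\ge B\,\}|\lesssim_{D,n}(A/B)^2$ --- proved by slicing in complex coordinate directions and using, in one complex variable, the area formula $\int_{\{|h|<A\}}|h'|^2\,dm=\int_{\{|\zeta|<A\}}\#h^{-1}(\zeta)\,dm(\zeta)\le\pi D A^2$ together with Chebyshev. Applying it with $A\asymp\lambda^{m-1}$, $B\asymp\lambda^{m}$ gives $|E|\lesssim_{d,n}\lambda^{-2}$, so the contribution of $E$ is $\lesssim_{d,n,\phi}\lambda^{-2}$ by the trivial bound. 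It is here, through the exponent $2=\dim_{\mathbb R}\mathbb C$ in this lemma, that $H_{f,\phi}^{-2}$ (rather than the real-variable $H^{-1}$) appears.

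\textbf{The non-stationary region --- the main obstacle.} On $W$ some first-order partial, say $\partial_{z_j}f$, has modulus $\ge\lambda$ on a subset $W_j$ covering $W$; here $g$ has no critical points and one integrates by parts with the operator $\psi\mapsto\partial_{\bar z_j}\!\big(\psi/\overline{\partial_{z_j}f}\big)$, producing denominators $|\partial_{z_j}f|\ge\lambda$. The difficulty --- and this is exactly the mechanism behind the failure of a naive complex van der Corput estimate --- is that iterating integration by parts generates curvature terms $\asymp|\partial_{z_j}^2 f|/|\partial_{z_j}f|^2$ and higher analogues which are \emph{not} controlled by the lower bound on $|\partial_{z_j}f|$ and may be arbitrarily large relative to it. I would resolve this by decomposing $W_j$ dyadically according to the sizes of $|\partial_{z_j}f|$ and of the higher partials of $f$: on the favourable pieces the curvature ratios are small, so boundedly many integrations by parts gain two powers of $\lambda$ (the boundary and cut-off error terms being controlled precisely because the smooth cut-offs are built from the quantities that have been dyadically frozen), while the unfavourable pieces are once more of the form $\{\,|\text{polynomial}|<A,\ |\nabla(\text{polynomial})|\ge B\,\}$ with $A/B\lesssim\lambda^{-1}$, hence of measure $\lesssim_{d,n}\lambda^{-2}$ by the lemma above. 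Summing the resulting geometric series yields the contribution of $W$ as $\lesssim_{d,n,\phi}\lambda^{-2}$.

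\textbf{Several variables.} For $n=1$ the above is complete; for general $n$ one may either run the same decomposition directly in $\mathbb R^{2n}$, with the sublevel-set estimate and the integration-by-parts bookkeeping unchanged up to constants $C_{d,n}$, or induct on $n$ by integrating out one variable via the one-variable case on generic complex slices and absorbing the genuinely mixed multi-indices into the first (sublevel-set) step. I expect essentially all of the effort to lie in the non-stationary region: organising the dyadic decomposition and the integration-by-parts error terms so that on every piece either oscillation or a small-measure sublevel-set bound produces a clean $\lambda^{-2}$, with all implicit constants depending only on $d,n$ and $\phi$.
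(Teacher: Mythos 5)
Your overall architecture --- split $\mathrm{supp}(\phi)$ into the region $W$ where the gradient is $\gtrsim\lambda$ and its complement $E$, treat $E$ by a sublevel-set estimate and $W$ by non-stationary phase --- is the same as the paper's (there the splitting is according to whether $|\nabla f(z)|$ or $J(z):=\max_{2\le|\alpha|\le d}|\partial^{\alpha}f(z)/\alpha!|^{1/|\alpha|}$ dominates). Your treatment of $E$ is correct and genuinely more elementary than the paper's: choosing the \emph{minimal} order $m$ of a witness of $H_f\ge\lambda$ reduces $E$ to finitely many sets $\{|\partial^{\beta}f|<C\lambda^{m-1},\ |\nabla\partial^{\beta}f|\ge\lambda^{m}\}$, and your area-formula/Chebyshev lemma $|\{|h|<A,\ |\nabla h|\ge B\}|\lesssim_{D,n}(A/B)^{2}$ proves exactly the first-order case of the paper's local sublevel bound \eqref{sublevel-vc-local-poly-several} without any of the Hensel-type iteration of Sections 12--13. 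That is a real simplification for this half of the argument and is worth recording.

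The gap is in $W$, and it is not a detail. You describe a strategy (decompose dyadically according to the sizes of $|\partial_{z_j}f|$ and the higher partials, integrate by parts on favourable pieces, use the sublevel lemma on unfavourable ones) but do not supply the decomposition, the cut-offs, or the summation. Two concrete points. First, to make ``boundedly many integrations by parts gain two powers of $\lambda$'' one needs a localization at the spatially varying scale $r(z)=1/J(z)$; the paper builds a Vitali covering by balls ${\mathbb B}^n_{\epsilon r(z_j)}(z_j)$ and must first prove that $J$ is comparable at nearby points (Lemma \ref{J-constant}) and that the covering has bounded overlap (Lemma \ref{bounded-overlap}) before the rescaled phase on each ball has $O(1)$ higher derivatives and gradient $\ge\Lambda_j\ge1$, so that stationary phase cleanly yields $\Lambda_j^{-N}$. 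Second, and more seriously, your claim that the unfavourable pieces are first-order sublevel sets with $A/B\lesssim\lambda^{-1}$ is not correct as stated: after the dyadic decomposition the pieces where $|\nabla f|\sim 2^{r}H$ and $J\sim 2^{r-\ell}H$ have measure growing like $2^{c\ell}2^{-2r}H^{-2}$ (this is the paper's bound for ${\mathcal B}_{j,\alpha}^{r,\ell}$), not $\lesssim\lambda^{-2}$ uniformly; the sum converges only because the oscillatory gain $\Lambda_j^{-N}\sim 2^{-N\ell}$ beats this growth. Moreover the paper controls those pieces with the local sublevel bound for a derivative condition of order $k=|\alpha|-1\ge1$, which your area-formula lemma supplies only for $k=1$; to run your version you would need either the higher-order bound (which sends you back to the Hensel machinery) or a finer decomposition exploiting that $J$ bounds \emph{all} higher partials from above --- plausible, but precisely the part you have not written.
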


The real version for polynomials $f \in {\mathbb R}[X_1, \ldots, X_n]$ can be found in \cite{ACK}. It is possible
to use the real version (afterall our phase is ${\rm Re}(f) + {\rm Im}(f) \in {\mathbb R}[X_1, \ldots, X_{2n}]$)
but we would only obtain the bound $I_{\phi}(f)| \lesssim_{d,n,\phi} H_f^{-1}$ which has limited use in applications.

As an immediate consequence of Theorem \ref{H-main-osc}, we have the following scale-invariant version of \eqref{vc-real}
for complex polynomials.

\begin{corollary}\label{complex-vc-many}
Suppose $f \in {\mathbb C}[X_1, \ldots, X_n]$ has degree $d$ and satisfies
$|\partial^{\alpha}f| \ge \mu$ on the support of $\phi \in C^{\infty}_c({\mathbb C}^n)$ for some partial derivative 
$\partial^{\alpha}$. Then 
$$
|I_{\phi}(\lambda f)| \ \le \ C_{d,n,\phi} \, |\mu\lambda|^{-2/|\alpha|}.
$$
\end{corollary}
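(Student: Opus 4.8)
The plan is to deduce the corollary directly from Theorem~\ref{H-main-osc} applied to the phase $\lambda f$; the entire content then reduces to a lower bound for the quantity $H_{\lambda f, \phi}$. We may assume $\lambda \neq 0$ and $\mu > 0$, since otherwise the right-hand side is $+\infty$ and there is nothing to prove. Note that $\partial^{\alpha}$ is a genuine partial derivative, so $|\alpha| \ge 1$; and since $|\partial^{\alpha} f| \ge \mu > 0$ on ${\rm supp}(\phi)$, the polynomial $\partial^{\alpha} f$ is not identically zero, which forces $|\alpha| \le d$. Consequently the multi-index factorial satisfies $\alpha! \le |\alpha|! \le d!$, and $(d!)^{-1/|\alpha|} \ge (d!)^{-1}$ because $|\alpha| \ge 1$.

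First I would unwind the definition of $H_{\lambda f}$, using that each derivative $\partial^{\beta}(\lambda f) = \lambda\, \partial^{\beta} f$ scales linearly in $\lambda$. For every $\underline{z} \in {\rm supp}(\phi)$ this gives
$$
H_{\lambda f}(\underline{z}) \ = \ \max_{|\beta| \ge 1} \bigl( |\lambda|^{1/|\beta|}\, |\partial^{\beta} f(\underline{z})/\beta!|^{1/|\beta|} \bigr) \ \ge \ |\lambda|^{1/|\alpha|}\, \bigl( |\partial^{\alpha} f(\underline{z})|/\alpha! \bigr)^{1/|\alpha|} \ \ge \ (d!)^{-1}\,\bigl( \mu |\lambda| \bigr)^{1/|\alpha|},
$$
where the first inequality simply restricts the maximum to $\beta = \alpha$ and the second uses the hypothesis $|\partial^{\alpha} f| \ge \mu$ on ${\rm supp}(\phi)$ together with $\alpha! \le d!$ and $|\alpha| \ge 1$. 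Taking the infimum over $\underline{z} \in {\rm supp}(\phi)$ yields $H_{\lambda f, \phi} \ge (d!)^{-1}(\mu |\lambda|)^{1/|\alpha|}$.

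Since $\lambda f$ has degree $d$, Theorem~\ref{H-main-osc} then gives
$$
|I_{\phi}(\lambda f)| \ \le \ C_{d,n,\phi}\, H_{\lambda f, \phi}^{-2} \ \le \ C_{d,n,\phi}\, (d!)^2\, \bigl( \mu |\lambda| \bigr)^{-2/|\alpha|},
$$
and absorbing the factor $(d!)^2$ --- which depends only on $d$ --- into the constant, together with $|\mu\lambda| = \mu|\lambda|$, produces exactly the asserted bound. There is no genuine obstacle here: the corollary really is immediate, and the only step demanding a moment's attention is the bookkeeping with the weights $\beta!$ and the observation that $|\alpha| \le d$, which is precisely what keeps the implied constant dependent on $d$ alone and not on the particular polynomial $f$.
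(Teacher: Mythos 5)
Your proof is correct and follows the same route the paper takes: apply Theorem~\ref{H-main-osc} to $\lambda f$ and bound $H_{\lambda f,\phi}$ from below by restricting the maximum in the definition of $H$ to the single multi-index $\alpha$. The only cosmetic difference is that the paper normalizes the hypothesis to $|\partial^{\alpha}P/\alpha!| \ge 1$ and so avoids your $(d!)^{-1}$ bookkeeping, but that bookkeeping is handled correctly.
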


The bound in Theorem \ref{H-main-osc} has a number of applications. For example a complex version
of a robust oscillatory integral estimate with polynomial phases $f \in {\mathbb R}[X]$ due to Phong and Stein \cite{PS-I}
can be established. Let $f \in {\mathbb C}[X]$ be a complex polynomial of degree $d$ and consider the
derivative $f'(z) = a \prod_{j=1}^L (z - w_j)^{m_j}$ where $\{w_j\}$ are the distinct roots of $f'$. 

\begin{proposition}\label{PS-intro} We have
$$
|I_{\phi}(f)| \ \le \ C_{d,\phi} \, \max_{1\le j \le L} \ \min_{{\mathcal C}\ni w_j} \, 
\Bigl[\frac{1}{|a \prod_{w_k \notin {\mathcal C}} (w_j - w_k)^{m_k}|}\Bigr]^{2/(S({\mathcal C}) + 1)}
$$
where the minimum is taken over all {\it root clusters} ${\mathcal C} \subseteq \{w_k\}$ containing $w_j$ and
$S({\mathcal C}) = \sum_{w_k \in {\mathcal C}} m_k$.
\end{proposition}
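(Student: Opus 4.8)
The plan is to reduce the estimate to Theorem~\ref{H-main-osc} in one variable, applied to pieces of $\phi$ adapted to the scale hierarchy generated by the roots $w_1,\dots,w_L$ of $f'$, in the spirit of the real case \cite{PS-I}. The starting point is a localised version of Theorem~\ref{H-main-osc}: if $\psi\in C_c^\infty({\mathbb D})$ is a fixed bump and $\phi_B(z)=\psi((z-z_0)/\rho)$ is a rescaled copy supported in a disc $B=B(z_0,\rho)$, then the substitution $z=z_0+\rho w$ gives $I_{\phi_B}(f)=\rho^2 I_{\psi}(g)$ with $g(w)=f(z_0+\rho w)$ of degree $d$, and since $H_g(w)=\rho\,H_f(z_0+\rho w)$, Theorem~\ref{H-main-osc} yields $|I_{\phi_B}(f)|\le C_d\,H_{f,\phi_B}^{-2}$ with a constant independent of $\rho$ — the $\rho^2$ from the Jacobian exactly cancelling the $\rho^{-2}$ from $H_{f,\phi_B}^{-2}$. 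Combined with the trivial bound $|I_{\phi_B}(f)|\le\|\phi_B\|_{L^1}\lesssim_\phi\rho^2$, this gives on every such disc
$$
|I_{\phi_B}(f)|\ \le\ C_{d,\phi}\,\min\bigl(\rho^2,\ H_{f,\phi_B}^{-2}\bigr).
$$

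Next I would build a purely geometric decomposition of ${\rm supp}(\phi)$. Fix a large $\Lambda=\Lambda_d$. For $z\in{\rm supp}(\phi)$, sort the distances $|z-w_{\sigma(1)}|\le\cdots\le|z-w_{\sigma(L)}|$; by pigeonhole among the at most $d$ consecutive ratios there is an $i$ (possibly $i=L$) with $|z-w_{\sigma(i+1)}|>\Lambda|z-w_{\sigma(i)}|$, so that $\mathcal C(z):=\{w_{\sigma(1)},\dots,w_{\sigma(i)}\}$ is a root cluster containing the nearest root $w_{j(z)}=w_{\sigma(1)}$ and well separated from the remaining roots, in the sense that ${\rm diam}(\mathcal C(z)\cup\{z\})$ is smaller by a factor $\gtrsim\Lambda$ than the distance from $z$ to any root outside $\mathcal C(z)$. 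The pair $(\mathcal C(z),w_{j(z)})$ takes at most $2^d d$ values, and a smooth partition of unity — together with one dyadic refinement in the cluster scale, which stabilises to a single disc as $z\to w_j$ since the cluster hierarchy at a fixed root has at most $d$ levels — writes $\phi=\sum_\nu\phi_\nu$ as $O_d(1)$ terms, each $\phi_\nu$ supported on a disc $B_\nu$ of radius $\rho_\nu$ on which $\mathcal C(\cdot)\equiv\mathcal C_\nu$ with nearest root $w_{j_\nu}$, and such that every root of $\mathcal C_\nu$ lies within $\lesssim_d\rho_\nu$ of every point of $B_\nu$ while every other root is at distance $\gtrsim_d\Lambda\rho_\nu$.

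The crux is a pointwise lower bound for $H_f$ on $B_\nu$. Factor $f'=g_\nu h_\nu$, where $g_\nu(z)=a\prod_{w_k\in\mathcal C_\nu}(z-w_k)^{m_k}$ has degree $S_\nu:=S(\mathcal C_\nu)$ and leading coefficient $a$, and $h_\nu(z)=\prod_{w_k\notin\mathcal C_\nu}(z-w_k)^{m_k}$ is monic. On $B_\nu$ the factors of $g_\nu$ have modulus $\lesssim_d\rho_\nu$ while those of $h_\nu$ have modulus $\sim_d|w_{j_\nu}-w_k|$; differentiating $f'=g_\nu h_\nu$ exactly $S_\nu$ times, the term $g_\nu^{(S_\nu)}h_\nu=S_\nu!\,a\,h_\nu$ dominates the others, which by Cauchy estimates on a disc of radius $\sim\rho_\nu$ carry a factor $\Lambda^{-1}$ and are absorbed once $\Lambda=\Lambda_d$ is large. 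Hence
$$
\bigl|f^{(S_\nu+1)}(z)\bigr|\ \gtrsim_d\ |a|\prod_{w_k\notin\mathcal C_\nu}|w_{j_\nu}-w_k|^{m_k}\qquad(z\in B_\nu),
$$
so $H_f(z)\gtrsim_d\bigl|a\prod_{w_k\notin\mathcal C_\nu}(w_{j_\nu}-w_k)^{m_k}\bigr|^{1/(S(\mathcal C_\nu)+1)}$; on the outer shells of a cluster the stronger bound $H_f(z)\ge|f'(z)|$ is also available, and we keep whichever is larger.

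Putting the two displays together, on $B_\nu$ one gets $|I_{\phi_\nu}(f)|\le C_{d,\phi}\,\min\bigl(\rho_\nu^2,\ |a\prod_{w_k\notin\mathcal C_\nu}(w_{j_\nu}-w_k)^{m_k}|^{-2/(S(\mathcal C_\nu)+1)}\bigr)$. The final step is elementary but is precisely where the \emph{minimum} over clusters is produced: for a fixed root $w_j$, as $\mathcal C$ runs through the nested good clusters containing $w_j$ at scales $\rho_{\mathcal C}$, the two families $\{\rho_{\mathcal C}^2\}$ and $\{|a\prod_{w_k\notin\mathcal C}(w_j-w_k)^{m_k}|^{-2/(S(\mathcal C)+1)}\}$ interlace in such a way that the minimum over all of them is $\le\min_{\mathcal C\ni w_j}|a\prod_{w_k\notin\mathcal C}(w_j-w_k)^{m_k}|^{-2/(S(\mathcal C)+1)}$ — a short induction up the at most $d$ levels using $|w_j-w_k|\lesssim_d\rho_{\mathcal C'}$ for $w_k\in\mathcal C'\setminus\mathcal C$. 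Summing the $O_d(1)$ pieces and dominating the sum by a constant times its largest term, which is at most $\max_j\min_{\mathcal C\ni w_j}[\,\cdot\,]^{2/(S(\mathcal C)+1)}$, yields the Proposition; the region where $z$ is far from every root is the case $\mathcal C=\{w_1,\dots,w_L\}$ and needs no separate treatment. The main obstacle is this bookkeeping: organising the scale hierarchy so that the decomposition depends only on the root configuration while the oscillatory bound $H_f^{-2}$ and the volume bound $\rho^2$ together beat \emph{every} cluster — exactly the gain over a single use of Theorem~\ref{H-main-osc}. The analytic input, isolating the leading term of an $S_\nu$-th derivative by Cauchy estimates, is comparatively soft.
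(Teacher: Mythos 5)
Your strategy is genuinely different from the paper's, and as written it has a gap at the final step. The paper (Section \ref{Phong-Stein}) never decomposes the integral: it applies Theorem \ref{H-main-osc} once, globally, and reduces the Proposition to the purely pointwise inequality \eqref{PS-H-complex} --- for every $z_{*}$ the \emph{nearest} root $\xi$ of $f'$ satisfies $\bigl[|a\prod_{\xi_j\notin{\mathcal C}}(\xi-\xi_j)^{e_j}|\bigr]^{1/(S({\mathcal C})+1)}\le C_d H_f(z_{*})$ for \emph{every} cluster ${\mathcal C}\ni\xi$. That inequality is proved by extracting lower bounds for $H_f(z_{*})$ from the whole hierarchy of derivatives $f^{(1+\rho_k)}$ at the single point $z_{*}$ and playing two consecutive such bounds (the two inequalities in \eqref{Q-k}) against each other according to where $S({\mathcal C})$ sits in the hierarchy. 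You instead decompose $\phi$ and try to win cluster by cluster, which forces each piece to beat \emph{all} clusters containing its root, and that is exactly where your argument breaks.

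Concretely, take $f'(z)=a(z-w_1)(z-w_2)$ with $|w_1-w_2|=\rho$ and $|a|^{-1/3}\ll\rho\ll 1$. The right-hand side of the Proposition is $\min\bigl((|a|\rho)^{-1},|a|^{-2/3}\bigr)=(|a|\rho)^{-1}$, the singleton cluster winning the minimum. Your outermost piece has ${\mathcal C}_\nu=\{w_1,w_2\}$, $\rho_\nu\sim 1$, $Q_{{\mathcal C}_\nu}=|a|$, and your per-piece bound is $\min(1,|a|^{-2/3})=|a|^{-2/3}\gg(|a|\rho)^{-1}$. The interpolation $\min(A,B)\le A^{1-\theta}B^{\theta}$ with $\theta=(S({\mathcal C}_\nu)+1)/(S({\mathcal C})+1)$ that drives your interlacing only works for super-clusters ${\mathcal C}\supseteq{\mathcal C}_\nu$ (where $\theta\le 1$); for proper sub-clusters it fails, and no choice of $\theta$ rescues it here. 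The outer piece is in fact harmless, but only because the set $\{\,{\mathcal C}(\cdot)={\mathcal C}_\nu\,\}$ for a non-minimal cluster is an \emph{annular} region excluding neighbourhoods of the roots --- so your pieces cannot all be discs containing their clusters, there are $\sim\log(1/\rho)$ dyadic scales inside such a region rather than $O_d(1)$, and on it one must use $|f'|$ together with the intermediate derivatives $f^{(S({\mathcal C}')+1)}$ and sum a geometric series. That is the actual content of the Phong--Stein decomposition argument, and it is absent from your write-up (your parenthetical ``keep whichever is larger'' cannot be implemented on a disc that contains the roots, where $\inf|f'|=0$). Either carry out that annular analysis in full, or adopt the paper's route, which replaces the entire decomposition by the single pointwise inequality \eqref{PS-H-complex}; your analytic input (isolating $S_\nu!\,a\,h_\nu$ in $f^{(S_\nu+1)}$) is essentially the paper's \eqref{H-F} and is fine.
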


This opens the door to establish complex versions of results based on the Phong-Stein bound.

Another application involves the fourier extension operator for the complex moment curve $z \to (z, z^2, \ldots, z^d)$
which we can view as a 2-surface in ${\mathbb R}^{2d}$. The oscillatory integral operator
$$
{\mathcal E} b ({\underline{w}}) \ = \ \int_{{\mathbb C}} {\rm e}(w_1 z + \cdots + w_d z^d) \, b(z) \, \phi(z) \, dz
$$
differs from the fourier extension operator of the 2-surface defined by the complex moment curve by a sympletic transformation.

\begin{proposition}\label{ACK-intro} Set $q_d = 0.5(d^2 + d) + 1$. Then ${\mathcal E}1 \in L^q ({\mathbb C}^d)$
if and only if $q>q_d$.

If ${\mathcal E}b ({\underline{w}})$ is defined with respect to the sparse polynomial $w_1 z^{k_1} + \cdots + w_d z^{k_d}$
where $K := k_1 + \cdots + k_d < (0.5) k_d (k_d +1)$, then ${\mathcal E} 1 \in L^q({\mathbb C}^d)$ if and only if
$q > K$.
\end{proposition}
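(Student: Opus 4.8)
The strategy is to convert the $L^q$ question into a measure estimate for the sublevel sets
\[
\Omega_\lambda \ := \ \bigl\{ {\underline w}\in{\mathbb C}^d : H_{f_{\underline w},\phi}\le\lambda \bigr\}, \qquad f_{\underline w}(z) \ := \ \textstyle\sum_{j=1}^d w_j z^{k_j},
\]
where $f_{\underline w}$ is the phase of ${\mathcal E}1({\underline w})$ (with $k_j=j$ in the dense case); since ${\mathcal E}1$ is smooth and bounded near the origin, only its behaviour at infinity is at issue. The underlying linear-algebraic fact is: fixing a base point $z_0\in{\rm supp}(\phi)$ and passing from ${\underline w}$ to the Taylor coefficients $c_m(z_0)=f_{\underline w}^{(m)}(z_0)/m!$ of $f_{\underline w}$ about $z_0$, the map ${\underline w}\mapsto(c_{k_1}(z_0),\dots,c_{k_d}(z_0))$ is ${\mathbb C}$-linear with triangular, unit-diagonal matrix, hence unimodular; consequently the ``frozen'' set $\{{\underline w}:H_{f_{\underline w}}(z_0)\le\lambda\}$ lies inside the polydisc $\prod_j\{|c_{k_j}(z_0)|\le\lambda^{k_j}\}$, of Lebesgue measure exactly $\pi^d\lambda^{2K}$ with $K=k_1+\dots+k_d$ (and $2K=d(d+1)=2q_d-2$ in the dense case).

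For sufficiency I would feed in Theorem \ref{H-main-osc}, $|{\mathcal E}1({\underline w})|\le C\,H_{f_{\underline w},\phi}^{-2}$, and decompose dyadically in $H_{f_{\underline w},\phi}$:
\[
\|{\mathcal E}1\|_{L^q({\mathbb C}^d)}^q \ \lesssim \ \int_{{\mathbb C}^d}\min\bigl(1,H_{f_{\underline w},\phi}^{-2}\bigr)^q\,d{\underline w} \ \lesssim \ |\Omega_1|+\sum_{k\ge0}2^{-2qk}\,|\Omega_{2^{k+1}}|,
\]
so everything reduces to bounding $|\Omega_\lambda|$. In the dense case this is routine: cover ${\rm supp}(\phi)$ by $\lesssim\lambda^2$ discs of radius $\lambda^{-1}$; a one-line Taylor estimate shows $H_{f_{\underline w}}(z_0)\le\lambda$ forces $H_{f_{\underline w}}(z_i)\lesssim_d\lambda$ at the nearest centre $z_i$, so $\Omega_\lambda$ is covered by $\lesssim\lambda^2$ frozen polydiscs, whence $|\Omega_\lambda|\lesssim\lambda^2\cdot\lambda^{2K}=\lambda^{2q_d}$ and the series converges for $q>q_d$. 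In the sparse case this crude union only gives $\lambda^{2K+2}$ and the spurious factor $\lambda^2$ must be removed; this is exactly what the hypothesis $K<\tfrac12 k_d(k_d+1)$ buys, since it says precisely that some exponent $m\le k_d$ is absent from $\{k_j\}$, so the frozen polydisc at $z_0$ also satisfies $|c_m(z_0)|\le\lambda^m$ for those missing $m$. Those extra constraints confine the admissible base points $z_0$ to a set that contracts as $H_{f_{\underline w},\phi}$ grows (for a top gap, $z_0$ is pinned near a zero of a fixed lower-order derivative), and bookkeeping this trade-off collapses the gain from uniting over $z_0$ to at most a power of $\log\lambda$, so that $|\Omega_\lambda|\lesssim\lambda^{2K}(\log\lambda)^{c(d)}$ and the series converges for $q>K$. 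Carrying out this collapse rigorously — pushing the elementary counting that underlies Theorem \ref{H-main-osc} and its real prototype \cite{ACK} through to exploit the gaps in the exponent set — is the step I expect to be the main obstacle.

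For necessity I would prove the matching lower bound, ${\mathcal E}1\notin L^q$ for $q$ at or below the threshold, by exhibiting a large set on which $|{\mathcal E}1({\underline w})|\gtrsim H_{f_{\underline w},\phi}^{-2}$. Fix a dyadic $\lambda$ and restrict to the portion of $\Omega_{2\lambda}\setminus\Omega_\lambda$ where the infimum defining $H_{f_{\underline w},\phi}$ is attained at an interior base point $z_0$ of ${\rm supp}(\phi)$ with $|c_2(z_0)|\sim\lambda^2$ and $|c_m(z_0)|\le\varepsilon\lambda^m$ for every other $m\ge1$ ($\varepsilon=\varepsilon(d)$ small) — in the sparse case, the analogue with the order of the critical point dictated by the exponents present. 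Then $f_{\underline w}$ has a non-degenerate critical point $z_*$ within $O(\lambda^{-1})$ of $z_0$ with $|f_{\underline w}''(z_*)|\sim\lambda^2$; since ${\rm Re}\,f_{\underline w}+{\rm Im}\,f_{\underline w}={\rm Re}\bigl((1-i)f_{\underline w}\bigr)$ is harmonic, its critical points are exactly the zeros of $f_{\underline w}'$ and the real Hessian at $z_*$ has determinant $-2|f_{\underline w}''(z_*)|^2\neq0$; a two-dimensional stationary-phase expansion at scale $|f_{\underline w}''(z_*)|\sim\lambda^2$, with the higher terms contributing only an $O(\varepsilon)$ relative error and the model Fresnel integral $\int_{\mathbb C}{\rm e}(cz^2)\,dz$ ($c\neq0$) being non-zero, then gives $|{\mathcal E}1({\underline w})|\sim|f_{\underline w}''(z_*)|^{-1}\sim\lambda^{-2}\sim H_{f_{\underline w},\phi}^{-2}$ on this portion. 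Its measure is comparable to $|\Omega_\lambda|$: in the dense case the models $f_{\underline w}(z)=\lambda^2(z-z_0)^2$ realize it for every $z_0$ in a $\lambda^{-1}$-net in ${\rm supp}(\phi)$, and since $f_{\underline w}'$ has degree $d-1$ no ${\underline w}$ lies in more than $d-1$ of the corresponding polydiscs, so the measure is $\gtrsim\lambda^2\cdot\lambda^{2K}=\lambda^{2q_d}$; in the sparse case a single base point already accounts for $\gtrsim\lambda^{2K}$. Summing over dyadic $\lambda=2^k$,
\[
\|{\mathcal E}1\|_q^q \ \gtrsim \ \sum_k 2^{-2qk}\,2^{2q_d k}\quad(\text{dense}), \qquad \gtrsim \ \sum_k 2^{-2qk}\,2^{2Kk}\quad(\text{sparse}),
\]
which diverges precisely when $q\le q_d$, respectively $q\le K$, completing the converse.

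Two subsidiary points remain in the necessity step, both routine once the above is in place: the rescaled oscillatory integral occurring in the stationary-phase expansion must stay bounded below uniformly as the normalized higher coefficients range over $|c_m(z_0)/\lambda^m|\le\varepsilon$ — a stability statement for conditionally convergent oscillatory integrals of bounded degree, provable via the machinery behind Theorem \ref{H-main-osc} together with the non-vanishing of $\int_{\mathbb C}{\rm e}(cz^m)\,dz$, $c\neq0$, $m\ge2$ — and one must check that the favourable base points carry a positive proportion of $|\Omega_\lambda|$, which follows from the non-emptiness and bounded overlap of the model polydiscs. With these the two halves of each statement meet at $q_d$ and at $K$.
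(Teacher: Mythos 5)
Your sufficiency argument is essentially the paper's: both use $|I(\underline{w})|\lesssim_{d,\phi}H_{f_{\underline{w}},\phi}^{-2}$ (Theorem \ref{H-main-osc} / Proposition \ref{vc-complex}), dyadic decomposition in $H$, and the sublevel-set bound $|\Omega_\lambda|\lesssim_d \lambda^{2q_d}$ obtained by covering the disc with $\sim\lambda^2$ points on a $\lambda^{-1}$-grid and, at each base point $z_0$, passing by the unit-determinant triangular map $\underline{w}\mapsto(c_1(z_0),\dots,c_d(z_0))$ to a polydisc of measure $\lesssim\lambda^{2K}$; this is precisely Proposition \ref{H-sublevel} and its proof. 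For the sparse statement the paper, like you, only gestures at ``the argument runs as in the real case,'' so neither source resolves that bookkeeping in detail; your identification of the obstruction (the spurious factor $\lambda^2$ must be absorbed using the missing exponents) is correct, and the log-loss you anticipate would not hurt the threshold $q>K$.

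The necessity argument is where you genuinely diverge from the paper. You build the large set out of configurations with a \emph{non-degenerate} (quadratic) critical point: $|c_2(z_0)|\sim\lambda^2$, $|c_m(z_0)|\le\varepsilon\lambda^m$ for $m\ne 2$, with $z_0$ ranging over a $\lambda^{-1}$-net, and a bounded-overlap claim coming from the at-most-$(d-1)$ zeros of $f'_{\underline{w}}$; the lower bound for $|I|$ is then a two-dimensional stationary-phase estimate with relative error $O(\varepsilon)$, the model being $\int_{\mathbb C}e(cz^2)\,dz\ne0$. The paper instead concentrates the size entirely in the \emph{top} coefficient: $|x_d|\sim Q_m^d$ with all lower-order Taylor data at the grid point $z_{r,s}$ bounded by $(c_1Q_m)^k$, so the local model at the critical point is the fully degenerate $\int_{\mathbb C}e(y_d z^d)\,dz=c_d|y_d|^{-2/d}\sim Q_m^{-2}$, with $c_d\ne0$. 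This choice is not cosmetic: because $c_d(z)\equiv x_d$ is base-point-independent, the sets $R_m^{r,s}(x_d)$ are disjoint for trivial reasons (one checks a single coordinate $x_{d-1}$), and the split $II=II^1+II^2$ can be controlled by tuning two absolute constants $a\gg1$ (scale of the cutoff around $z_{r,s}$) and $c_1\ll1$ (size of the lower-order box) so that $|II^2|\le\tfrac12|II^1|$, via the quantitative $\phi$-dependence $C_{d,\phi}\lesssim_N 1+J^{-N}\|\phi\|_{C^N}$ recorded in \eqref{phi-dependence}. Your route is workable but places the burden on a bounded-overlap count and on the ``stability statement for conditionally convergent oscillatory integrals'' you flag at the end — precisely the technical work the paper discharges explicitly via the $a,c_1$ calibration. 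In short: your quadratic model is the more classical stationary-phase picture; the paper's degree-$d$ model is tailored to the coefficient structure of the moment curve and produces cleaner disjointness and error control. Both produce the correct measure $\sim\lambda^{2q_d}$ and the correct lower bound $\sim\lambda^{-2}$, hence the same threshold.

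One small caution on your bounded-overlap argument: it requires not just that $f'_{\underline{w}}$ has at most $d-1$ zeros, but also that for each admissible base point $z_0$ the constraints $|c_1(z_0)|\le\varepsilon\lambda$, $|c_2(z_0)|\sim\lambda^2$ actually force a critical point within, say, $\lambda^{-1}/3$ of $z_0$, so that critical points attached to distinct $\lambda^{-1}$-separated net points really are distinct. That is true for $\varepsilon$ small, but it needs to be said, and it quietly constrains $\varepsilon$ (which in turn shrinks the measure of $D(z_0)$ by a factor $\varepsilon^{2(d-1)}$ — harmless since it is a constant depending only on $d$).
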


The real analogue of Proposition \ref{ACK-intro} is due to Arkhipov, Chubarikov and Karatsuba; see \cite{ACK-1} and \cite{ACK-2}.

\subsection*{Structure of the paper} In the following section we illustrate the lack of scale-invariant bounds for general complex functions. In Sections 3 and 4, we motivate and develop the theory of sublevel sets for complex differentiable functions
and in Section \ref{H-functional} we introduce the $H$ functional in the complex setting and illustrate its usefulness
in the theory of sublevel sets. In Sections 6 and 7, we develop the theory for oscillatory integrals with complex 
polynomial phases. 
In Sections 8-11, we give the proofs of the main results for sublevel sets and oscillatory integrals (Theorem \ref{H-main-osc}), reducing matters
to a structural sublevel set statement which we establish in Sections 12 and 13. In Section 14, we give
the proof of Proposition \ref{PS-intro} and in Sections 15-19, we give the proof of Proposition \ref{ACK-intro}. 

\subsection*{Notation}
We use the notation $A \lesssim B$ between two positive quantities $A$ and $B$ to denote $A \le CB$ for some
constant $C$. We sometimes use the notation $A \lesssim_k B$ to emphasise that the
implicit constant depends on the parameter $k$. We sometimes use $A = O(B)$ to denote the
inequality $A \lesssim B$.
Furthermore, we use $A \ll B$ to denote $A \le \delta B$ for a sufficiently small constant $\delta>0$
whose smallness will depend on the context.

\subsection*{Acknowledgement}
We thank Rob Fraser, John Green and Jonathan Hickman for enlightening conversations on the topics of this paper.

\section{An illustration}\label{illustration}

Let us begin with a simple illustration. Suppose we have a function $f$ with a large derivative,
say $|f'|\ge 1$ everywhere in some region $I$. Therefore $f$ is not stationary and we expect
the sublevel set $S_{\epsilon} = \{ z \in I : |f(z)| \le \epsilon \}$ to be small when $\epsilon$ is small. Consider two points
$z_0, z_1 \in S$ so that $f$ is small at these two points but we know that $f'$ is large at all
points $z_t := z_0 + t(z_1 - z_0), \, t\in [0,1]$ on the line segment from $z_0$ to $z_1$. Of course
these two bits of information can be connected by the fundamental theorem of calculus:
$$
\int_0^1 f'(z_t) \, dt \ (z_1 - z_0) \ = \ f(z_1) \ - \ f(z_0).
$$
Taking absolute values, we have
\begin{equation}\label{triangle}
\Bigl| \int_0^1 f'(z_t) \, dt \Bigr| \, \bigl|z_1 - z_0\bigr| \ = \ |f(z_1) - f(z_0)| \ \le \ 2 \epsilon
\end{equation}
by a simple use of the triangle inequality and 
since $z_0, z_1 \in S_{\epsilon}$ . But we know $|f'(z_t)| \ge 1$ for
all $0\le t \le 1$ and so it seems we are one step away from deducing that the diameter of $S_{\epsilon}$ is
at most $2 \epsilon$.

{\it But} we have not specified what world we are living in. The above discussion makes sense for real functions
$f$ whose derivative is always larger than 1 but it also makes sense for complex functions whose complex
derivative $f'$ is everywhere large in absolute value. With a little imagination, the above discussion makes
sense for a wide range of functions defined over disparate fields with an absolute value $|\cdot|$.

The point of this
illustration is that the real world is a very nice world to live in because the underlying field ${\mathbb R}$ is not only a complete field, 
but it is also an ordered field which gives rise
to the {\it intermediate value theorem} from elementary calculus. This simple result can pack a powerful punch
at times. 

So for the time being, let us suppose that we are living in the real world looking at a real
function $f$ on the real line with a large derivative. Then by the intermediate value theorem, we conclude that either $f'\ge 0$ is always
nonnegative or $f'\le 0$ is always nonpositive and hence 
$$
\int_0^1 |f'(z_t)| \, dt \ = \ \bigl| \int_0^1 f'(z_t) \, dt \bigr|
$$
so that we can move the abolute value sign inside the integral for free! And so indeed we are just one step away
from concluding that ${\rm diam} (S_{\epsilon}) \le 2 \epsilon$ since $|f'(z_t)| \ge 1$ for all $t \in [0,1]$. The
inequality ${\rm diam} (S_{\epsilon}) \le 2 \epsilon$ is not only a nice structural statement for the sublevel set $S_{\epsilon}$
(it implies in particular the measure bound $|S_{\epsilon}| \le 2 \epsilon$), it is {\it scale-invariant} as described in the
Introduction;  namely, we can relax
the condition that $|f'| \ge 1$. For general $f$, set $\mu = \inf_I |f'|$ and scale $g = \mu^{-1} f$ (if $\mu > 0$),
noting $|g'| \ge 1$ on $I$ and applying the above diameter bound to $g$, we see that ${\rm diam} (S_{\epsilon}) \le 2 \epsilon/\mu$.
This inequality remains true if $\mu = 0$. Such a general statement is not possible if we only knew that
${\rm diam} (S_{\epsilon}) \le C \epsilon$ where $C$ depends on $f$. 

Scale-invariant inequalities are very powerful. For example the scale-invariant measure bound 
$|S_{\epsilon}| \le 2 \epsilon$ whenever $|f'| \ge 1$ on $I$ {\it almost} implies by itself that the scale-invariant
bound $|S_{\epsilon}| \le C_k \epsilon^{1/k}$ holds whenever $|f^{(k)}| \ge 1$ on $I$. The standard induction on $k$
argument needs one additional {\it a priori} structural statement for $S_{\epsilon} = \{z \in I: |f(z)| \le \epsilon\}$
when $f^{(k)}$ does not vanish on $I$; namely, that $S_{\epsilon}$ is the union of at most $k$ intervals. This is yet
another consequence of the intermediate value theorem or the order structure of ${\mathbb R}$. The
same story holds for oscillatory integrals with a real phase $f$. One proves a scale-invariant bound when
$|f'| \ge 1$ everywhere and then uses this bound to prove a bound whenever $|f^{(k)}|\ge 1$. These are
the van der Corput estimates we mentioned in the Introduction. All this
from the order structure of the real field ${\mathbb R}$. See \cite{Stein-big} for more details.

Now let us move from the real world to the complex world. The inequality \eqref{triangle} still holds for
complex functions whose complex derivative satisfies $|f'| \ge 1$ on $I$. How far are we from concluding
that ${\rm diam} (S_{\epsilon})  \le 2 \epsilon$ without the use of the intermediate value theorem coming from the order
structure of the reals? 

The problem here is that the real and imaginary parts of $f$ can conspire to produce many
zeros of $f$ in our region $I$ while $f$ still retains the property $|f'|\ge 1$ everywhere on $I$. Consider the function
$$
f(z) \ = \ \frac{e^N (e^{Nz} - 1)}{N}
$$
for some large $N\ge 1$. We have 
$$
|f'(z)| \ = \ e^{N({\rm Re}(z) + 1)} \ \ge \ 1 \ \ {\rm for \ all} \ \  
z \in I \ := \ \{ z \in {\mathbb C} : {\rm Re}(z) \ge -1 \}.
$$ 
On the other hand, $f(z) = 0$ for infinitely
many $z$ on the line ${\rm Re}(z) = 0$; precisely for $z = x + iy$ with $x=0$ and $y = 2\pi k /N$
for every $k \in {\mathbb Z}$. Since zeros of $f$ are clearly contained in any sublevel set $S_{\epsilon}$, we see
that ${\rm diam} (S_{\epsilon}) = \infty$ for every $\epsilon>0$. This is not an artifact that $I$ is an unbounded region since we could
restrict our attention to the unit square 
${\mathfrak U} = \{ z = x + iy : -1\le x, y \le 1 \}$ and conclude that ${\rm diam} (S_{\epsilon}\cap{\mathfrak U}) \sim 2$. 
In particular ${\rm diam}(S_{\epsilon}\cap{\mathfrak U}) \not\to 0$ as $\epsilon\to 0$ which is in sharp contrast
to the situation over the real field ${\mathbb R}$.

A diameter bound is stronger than a measure bound
which is often what is required in applications. Let us modify the example above, moving the zeros
from the line ${\rm Re}(z) = 0$ where $|f'|$ is exponential in $N$ to the line ${\rm Re}(z) = -1$
where $|f'(z)| \equiv 1$. We shift the example to
\begin{equation}\label{example-complex}
f(z) \ = \ \frac{e^{N(z+1)} - 1}{N}
\end{equation}
so that the zeros of $f$ now lie on the line ${\rm Re} (z) = -1$ but $|f'(z)| \ge 1$ still holds on the
half-plane $I = \{ z \in {\mathbb C} : {\rm Re} (z) \ge -1 \}$. A simple calculation shows that
\begin{equation}\label{counterexample}
\bigl|\{ z \in {\mathfrak U} : |f(z)| \le \epsilon \}\bigr| \ \sim \ 
N \epsilon^2,  \ {\rm if} \ \epsilon < N^{-1} 
\end{equation}
for the sublevel set $S_{\epsilon}\cap{\mathfrak U}$ on the unit square
${\mathfrak U}$. 
Hence
there is {\bf no} scale-invariant bound for sublevel sets of the form
$|\{ |f| \le \epsilon\}| \le C \epsilon^2$ for general complex differentiable
functions with $|f'| \ge 1$ and where $C$ is a universal constant. A bound in terms
of $\epsilon^2$ is optimal and is the natural bound; when $f'\not= 0$, the function $f$
is an open map and locally 1-1 and so we expect, as the above example shows, the sublevel set to be a union of $\epsilon$ discs
centred at the zeros of $f$, at least when $\epsilon>0$ is small enough. 

Considering $g(z) = [f(z)]^2$ where $f$ is the example above shows that there are no scale-invariant
bounds of the form $|\{ z \in {\mathfrak U}: |g(z)| \le \epsilon\}| \le C \epsilon$ which hold for
some universal constant $C$ and every complex differentiable function $g$ satisfying $|g''(z)| \ge 1$ on ${\mathfrak U}$.

\section{Complex sublevel set bounds}\label{vc-sublevel-complex}

The parameter $N$ in the counterexample $f = f_N$ above for a scale-invariant sublevel set bound
$|\{ |f| \le \epsilon\}| \le C \epsilon^2$ can be viewed as the number of zeros $f$ on the unit square
${\mathfrak U}$. But it can also be viewed as the logarithm of the
$L^{\infty}$ norm of $f$, $\log \|f\|_{L^{\infty}}$, on any larger disc, say ${\mathbb D}_2 = \{ z : |z| \le 2\}$. 
These two points of view are connected by Jensen's formula from basic complex analysis which has the
following consequence; we have
$$
\# \{z \in {\mathbb D}_r : f(z) = 0 \} \ \lesssim_r \ \log \|f\|_{L^{\infty}({\mathbb D}_2)}
$$
for any $r<2$ and
for general complex differentiable functions $f$ on ${\mathbb D}_2$ with $|f(0)|\ge 1$. 
Note that equality occurs for the
exponential example above. In some sense, the bound \eqref{counterexample} for our exponential example
is sharp.

\begin{proposition}\label{vc-sublevel-first-derivative} 
Suppose $f\in {\mathcal H}({\mathbb D}_2)$ with $|f(z)| \le M$ for $z\in {\mathbb D}_2$.
Let $N$ denote the number of zeros of $f$ in the disc ${\mathbb D}_{5/4}$. If
$64 \epsilon  \le M^{-1}$, then
\begin{equation}\label{sublevel-bound-first}
\bigl| \{z \in {\mathbb D} : |f(z)| \ \le \ \epsilon \} \bigr| \ \le \ 10 \, N \, \epsilon^2.
\end{equation}
\end{proposition}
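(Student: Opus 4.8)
The plan is to factor out the zeros of $f$ in ${\mathbb D}_{5/4}$ and reduce the sublevel set estimate to a product over single-zero factors, each of which contributes an $\epsilon^2$-disc. Write $f(z) = g(z) \prod_{j=1}^N (z - a_j)$ where $a_1, \ldots, a_N$ are the zeros of $f$ in ${\mathbb D}_{5/4}$ (with multiplicity) and $g \in {\mathcal H}({\mathbb D}_{5/4})$ is zero-free there. The Blaschke-type estimate behind Jensen's formula gives a lower bound for $|g|$ on the smaller disc ${\mathbb D}_{9/8}$, say, in terms of $M$ and $N$: since $g = f / \prod (z-a_j)$ and each factor $|z - a_j|$ is comparable to a constant on the boundary $|z| = 5/4$, the maximum principle applied to $1/g$ yields $|g(z)| \gtrsim M / (\text{const})^N$ on ${\mathbb D}$, which is not quite enough — so instead I would use the Jensen consequence quoted just before the proposition, namely $N \lesssim \log M$ (using $|f(0)| \ge 1$, which follows from $64\epsilon \le M^{-1}$ and $f$ not being too small somewhere, or can be arranged by the normalisation), to keep the constant $(\text{const})^N$ under control relative to the admissible range of $\epsilon$.

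Next I would localise. On the disc ${\mathbb D}$, the sublevel set $\{|f| \le \epsilon\}$ is contained in a union of discs around those zeros $a_j$ that actually lie near ${\mathbb D}$. Near such a zero $a_j$ of multiplicity $m_j$, we have $f(z) \approx c_j (z - a_j)^{m_j}$ with $|c_j|$ bounded below (by the $|g|$ lower bound and the separation of the other zeros), so $\{|f| \le \epsilon\}$ near $a_j$ is roughly a disc of radius $(\epsilon/|c_j|)^{1/m_j}$, of area $\lesssim (\epsilon/|c_j|)^{2/m_j}$. When $\epsilon$ is small enough — precisely the regime $64\epsilon \le M^{-1}$ — this is $\lesssim m_j \epsilon^2$ because $|c_j|^{-2/m_j} \lesssim 1$ and $\epsilon^{2/m_j} \le \epsilon^2$ when $\epsilon \le 1$ and $m_j \ge 1$. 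Summing over $j$ gives $\sum_j m_j \epsilon^2 = N\epsilon^2$ up to the absolute constant $10$. The clean way to run this rigorously is to use the area formula / argument principle: $|\{|f(z)| \le \epsilon\} \cap {\mathbb D}| = \int_{|w| \le \epsilon} \#\{z \in {\mathbb D} : f(z) = w\} \, dA(w) / (\text{Jacobian weighting})$, but it is cleaner still to bound it by $\int_{\{|f| \le \epsilon\}} 1 \, dA$ against $\int |f'|^2 / |f'|^2$ and invoke the change of variables $w = f(z)$, which gives $|\{|f| \le \epsilon\} \cap {\mathbb D}| \le \int_{|w| \le \epsilon} \#\{z \in {\mathbb D}: f(z) = w, f'(z) \ne 0\} \, dA(w) \le \pi \epsilon^2 \cdot \sup_{|w| \le \epsilon} \#\{z : f(z) = w\}$, and that supremum is $\le N'$ for $N'$ the number of zeros of $f - w$ in ${\mathbb D}$, which by Rouché (for $\epsilon$ small) equals $N$, the number of zeros of $f$ — modulo care about the critical points $f'=0$.

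The main obstacle is controlling the contribution of the critical set $\{f' = 0\}$, where the naive change of variables $w = f(z)$ breaks down, and simultaneously ensuring the constant stays genuinely absolute (the $10$) rather than growing with $N$ or $M$. The fix is to split $\{|f| \le \epsilon\}$ into the part near zeros of $f$ of multiplicity one (handled cleanly by the local open-mapping picture, each contributing $\le \pi\epsilon^2(1+o(1))$) and the part near higher-multiplicity zeros; for the latter, the factorisation $f \approx c_j(z-a_j)^{m_j}$ shows the sublevel set is essentially $\{|z - a_j| \le (\epsilon/|c_j|)^{1/m_j}\}$ and one needs the quantitative lower bound $|c_j| \gtrsim 1$ coming from $|f(0)| \ge 1$ together with $64\epsilon \le M^{-1}$, so that $(\epsilon/|c_j|)^{2/m_j} \le 10 m_j \epsilon^2 / N$ — here Rouché on an annulus separating $a_j$ from the other zeros, plus Cauchy estimates for $g$ on ${\mathbb D}_{9/8}$, do the work. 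Tracking the numerology so the three slack factors ($64$, $5/4$, $10$) fit together is routine but is where the real effort lies.
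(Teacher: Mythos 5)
Your proposal has genuine gaps, and it also misses the paper's actual route, which is quite different. The paper proves the proposition via the higher--order Hensel lemma, Lemma~\ref{hensel-L}, applied with $L=1$ to $\phi=f$: for any $z_0\in{\mathbb D}$ with $|f(z_0)|\le\epsilon$, the (implicit but essential) hypothesis $|f'|\ge 1$ on ${\mathbb D}$ gives $\delta=|f(z_0)f'(z_0)^{-2}|\le\epsilon$, so $\delta M\le\epsilon M\le 1/64$, and the Newton iteration $z_{n+1}=z_n-f(z_n)/f'(z_n)$ converges inside ${\mathbb D}_{5/4}$ to a zero $z_*$ of $f$ with $|z_*-z_0|\le 2|f(z_0)/f'(z_0)|\le 2\epsilon$. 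Thus $\{|f|\le\epsilon\}\cap{\mathbb D}$ is covered by $N$ discs of radius $2\epsilon$, giving \eqref{sublevel-bound-first}. The whole point is that a single approximate zero produces a genuine nearby zero, with no global factorisation, no Jensen count, and no counting of preimages.

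Three concrete problems with your sketch. First, the inequality ``$\epsilon^{2/m_j}\le\epsilon^2$ when $\epsilon\le 1$ and $m_j\ge 1$'' is reversed: for $m_j\ge 1$ and $\epsilon<1$ one has $\epsilon^{2/m_j}\ge\epsilon^2$. This is exactly why multiple zeros are dangerous, and why the hypothesis $|f'|\ge 1$ (which your factorisation branch never explicitly invokes) is indispensable --- it forces zeros of $f$ in ${\mathbb D}$ to be simple. Second, the Rouch\'e step, asserting $\#\{z:f(z)=w\}=N$ for all $|w|\le\epsilon$, requires $|f|>\epsilon$ on some circle between $\partial{\mathbb D}$ and $\partial{\mathbb D}_{5/4}$; nothing in the hypotheses supplies such a circle, and producing one is essentially the sublevel-set statement you are trying to prove. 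Replacing it by a Jensen estimate for $f-w$ would bound $n(w)$ by $O(\log M)$, not by $N$, which is a strictly weaker conclusion (for example, it cannot recover that the sublevel set is empty when $N=0$, which the Hensel argument does immediately). Third, you lean repeatedly on ``$|f(0)|\ge 1$'', but no such normalisation is given or deducible: $f(0)$ can be arbitrarily small. What one can extract from $|f'|\ge 1$ and the Cauchy estimate is $\max_{\mathbb D}|f|\ge 1/2$ at some uncontrolled point, and reworking Jensen around such a base point is both delicate and still insufficient to recover the $N$-dependence in \eqref{sublevel-bound-first}.
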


If we restrict our attention to complex
polynomials $f \in {\mathbb C}[X]$ of degree $d$, then the bound \eqref{sublevel-bound-first}
becomes $C_d \, \epsilon^2$ and we might think this gives us a scale-invariant bound for the class of
complex polynomials $f$ of degree at most $d$ (scaling $\mu f$ by a constant $\mu$ does not change
the degree of the polynomial $f$). But this is not the case due to the smallness condition $64 \epsilon \le M^{-1}$
which is necessary in general by the above example.

Proposition \ref{vc-sublevel-first-derivative} can be extended to higher derivatives.

\begin{proposition}\label{vc-complex-sublevel} Let $f \in {\mathcal H}({\mathbb D}_2)$ with $|f(z)|\le M$ for all $z\in {\mathbb D}_2$.
Suppose that $|f^{(k)}(z)| \ge 1$ for all $z \in {\mathbb D}$. Then if $\epsilon \ll_k M^{-(2k-1)}$,
\begin{equation}\label{sublevel-vc}
\bigl| \{ z \in {\mathbb D} : |f(z)| \le \epsilon \}\bigr| \ \lesssim_n \ M^{2(k-1)/k} N' \, \epsilon^{2/k} 
\end{equation}
where $N'$ now denotes the number of zeros of $f, f', \ldots and \ f^{(k-1)}$ 
in the disc ${\mathbb D}_{5/4}$.
\end{proposition}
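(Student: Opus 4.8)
The plan is to induct on $k$, using Proposition~\ref{vc-sublevel-first-derivative} as the base case $k=1$ (where the hypothesis $|f^{(k)}|\ge 1$ on $\mathbb{D}$ is exactly $|f'|\ge 1$, $N'=N$, and the smallness threshold $\epsilon\ll M^{-1}$ matches $M^{-(2k-1)}$ with $k=1$). For the inductive step, suppose the bound holds for $k-1$ and assume $|f^{(k)}|\ge 1$ on $\mathbb{D}$. The natural move, mimicking the classical real van der Corput induction, is to apply the inductive hypothesis to $g=f'$, which satisfies $|g^{(k-1)}|=|f^{(k)}|\ge 1$ on $\mathbb{D}$, and has an $L^\infty$ bound on $\mathbb{D}_2$ controlled by $M$ via a Cauchy-estimate type argument (derivatives of a bounded holomorphic function on a slightly smaller disc are bounded). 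This gives control on $\{z\in\mathbb{D}_{?}:|f'(z)|\le \delta\}$ as a union of few ``small'' pieces of total measure $\lesssim_k M^{2(k-2)/(k-1)}N''\delta^{2/(k-1)}$, where $N''$ counts zeros of $f',\dots,f^{(k-1)}$.

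Next I would split $\mathbb{D}$ into the region where $|f'|$ is large, say $|f'(z)|>\delta$, and the region $E_\delta=\{|f'|\le\delta\}$. On the complement of $E_\delta$ we have a lower bound on the first derivative, so we can invoke (a localized, rescaled version of) Proposition~\ref{vc-sublevel-first-derivative} — but here is where the geometry gets delicate: that proposition is stated for the fixed disc $\mathbb{D}_2\supset\mathbb{D}$, so to use it where $|f'|\ge\delta$ we need to cover that region by discs on which $f/\delta$ (or rather a suitable rescaling accounting for the size of $f$) satisfies the hypotheses, and patch the resulting sublevel-set measure bounds together, keeping the total number of discs under control by the zero-counting bound. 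The sublevel set inside $E_\delta$ is controlled by $|E_\delta|$ itself from the inductive hypothesis. Optimizing the free parameter $\delta$ — balancing $\delta^{2/(k-1)}$ against the contribution $\epsilon^2/\delta^2$ from the ``$|f'|$ large'' region — should produce the exponent $2/k$ and the power $M^{2(k-1)/k}$, and will also dictate the precise smallness requirement $\epsilon\ll_k M^{-(2k-1)}$ (one needs $\delta$ to stay below the threshold where Proposition~\ref{vc-sublevel-first-derivative} applies on each subdisc, which forces $\epsilon$ small relative to a power of $M$ that grows with $k$).

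The main obstacle I expect is precisely the passage from the ``global'' disc statement of Proposition~\ref{vc-sublevel-first-derivative} to a usable \emph{local} version with uniform constants. In the real setting the analogous step is trivial because the sublevel set on an interval where $|f'|\ge\delta$ is automatically an interval of length $\le 2\epsilon/\delta$; over $\mathbb{C}$ we have no such monotonicity, and $f'$ bounded below on a region does \emph{not} prevent $f$ from having many zeros there unless we also control $\|f\|_{L^\infty}$ on a fattened region (this is the whole point of the counterexample in Section~\ref{illustration}). So the real work is: cover $\{|f'|\ge\delta\}\cap\mathbb{D}$ by $O_k(1)$-overlapping discs of radius $\sim\delta$ (or a radius adapted to the local size of $f$), verify on each that the rescaled function meets the smallness hypothesis $64\epsilon'\le M'^{-1}$, and show the number of such discs contributing nontrivially is bounded by $N'$ plus a constant — the latter again via a Jensen's-formula zero count applied to $f$ and its derivatives. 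Assembling these pieces and then optimizing $\delta$ is the crux; the rest is bookkeeping with Cauchy estimates to track how the bound $M$ on $f$ over $\mathbb{D}_2$ degrades to bounds on $f',\dots,f^{(k-1)}$ over the intermediate discs $\mathbb{D}_{5/4}$, $\mathbb{D}_{3/2}$, etc.
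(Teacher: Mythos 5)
Your plan follows a genuinely different route from the paper: you induct on $k$ via a two-way split on the size of $|f'|$ and appeal recursively to the $k-1$ case for $f'$, whereas the paper makes a flat $k$-way decomposition of the sublevel set into pieces $S_1,\ldots,S_k$ cut out by a cascade of ratio conditions on \emph{consecutive} derivatives $|f^{(k-j)}|/|f^{(k-j+1)}|$, and then applies a higher-order Hensel lemma (Lemma~\ref{hensel-L}, with $L$ ranging up to $k-1$) to place a zero of the appropriate derivative near each point. For $k=2$ the two schemes coincide, and the base case is fine.

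The gap in your proposal, however, is not merely bookkeeping: the ``patching'' step genuinely fails for $k\ge 3$. Concretely, on the region $\{|f'|\ge\delta\}$ the only local substitute for Proposition~\ref{vc-sublevel-first-derivative} available to you is the $L=1$ Newton/Hensel iteration, whose convergence requires $M\,|f(z_0)|\,|f'(z_0)|^{-2}\ll 1$, i.e.\ $\delta\gg\sqrt{M\epsilon}$. But on the complementary region, the inductive hypothesis applied to $g=f'$ with sublevel threshold $\delta$ returns
$$
\bigl|\{z\in{\mathbb D}:|f'(z)|\le\delta\}\bigr|\ \lesssim\ M^{2(k-2)/(k-1)}\,N''\,\delta^{2/(k-1)},
$$
and plugging in the smallest admissible $\delta\sim\sqrt{M\epsilon}$ yields a bound of order $M^{(2k-3)/(k-1)}\epsilon^{1/(k-1)}$. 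Since $1/(k-1)<2/k$ precisely when $k\ge 3$, this term is \emph{larger} than the target $M^{2(k-1)/k}\epsilon^{2/k}$ for small $\epsilon$ --- the optimization of $\delta$ you propose runs into the constraint $\delta\gg\sqrt{M\epsilon}$ before the two contributions can be balanced, and the exponent $2/k$ is unreachable. Shrinking $\delta$ further to rebalance violates the Hensel condition on $\{|f'|\ge\delta\}$, no matter how you choose the covering discs or rescale: the $L=1$ iteration has no way to exploit the fact that, say, $|f''(z_0)|$ might be much smaller than the crude Cauchy bound $O(M)$.

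What the paper's proof supplies that your outline is missing is exactly this: Lemma~\ref{hensel-L} for $L\ge 2$ replaces the condition $M\,|f(z_0)f'(z_0)^{-2}|\ll 1$ by the weaker
$$
M\,\bigl|f(z_0)\,f'(z_0)^{-1}\,f^{(L)}(z_0)^{-1}\bigr|\ll 1
$$
together with smallness of the intermediate ratios $\delta_j=|f^{(j+1)}(z_0)f^{(j)}(z_0)^{-1}f(z_0)f'(z_0)^{-1}|$, and the $k$ subcases $S_1,\ldots,S_k$ are engineered precisely so that on each one some choice of $L$ makes all of these verifiable using only $|f^{(k)}|\ge 1$, $|f|\le\epsilon$, and $\epsilon\ll_k M^{-(2k-1)}$. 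The resulting covering discs around zeros of $f^{(k-j)}$ then have radii $\lesssim_k M^{(k-1)/k}\epsilon^{1/k}$, which gives the stated measure bound after the Jensen-type zero count. So if you want to pursue the inductive scheme, you would in any case need to first prove the local, higher-order Hensel statement --- and at that point the direct $k$-way split is cleaner than the recursion.
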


Propositions \ref{vc-sublevel-first-derivative} and \ref{vc-complex-sublevel} are proved using variants
of Hensel's lemma  from elementary number theory and adapting them to the archimedean setting 
of the real ${\mathbb R}$ or complex ${\mathbb C}$ field. In nonarchimedean settings, the
triangle inequality we used above to show $|f(z_1)-f(z_0)| \le 2 \epsilon$ for points $z_0, z_1 \in \{ |f|\le \epsilon\}$
improves to $|f(z_1) - f(z_0)| \le \epsilon$ and this is key to run a Hensel-type argument which
is an iterative scheme to find an actual zero of $f$, starting with an approximate zero $z \in \{|f|\le \epsilon\}$.
Losing a factor of 2 at each step in the iteration results in an unexceptable exponential loss $2^n$ at
the $n$th stage. However the example $(e^{N(z+1)} - 1)/N$ tells us that we can only expect
a good bound for small $\epsilon\ll_N 1$.  The argument can then be adjusted to hide/move
the factors of $2$ into the smallness of $\epsilon$.

Importantly, these Hensel-type arguments have the advantage of being local in nature;
the nondegeneracy condition $f' \not= 0$ (or $f^{(k)} \not= 0$) is only needed for points in the
sublevel set. The iteration scheme remains essentially in the sublevel set. Hence the bound \eqref{sublevel-vc} in Proposition \ref{vc-complex-sublevel} can be
improved to a measure bound of a {\it local sublevel set}:
\begin{equation}\label{sublevel-vc-local}
\bigl| \{ z \in {\mathbb D} : |f^{(k)}(z)| \ge 1, \  |f(z)| \le \epsilon \}\bigr| \ \lesssim_{k,M} \ \, \epsilon^{2/k} 
\ \ \ {\rm when} \ \ \epsilon \ll_{k,M} 1.
\end{equation}
In fact we can go further and deduce a structural statement about these local sublevel sets; they
are contained in a union of $\epsilon^{1/k}$ discs centred at the zeros of $f$ and its derivatives.   

This is the key to develop a theory and guide us to look for scale-invariant bounds. Not only are the arguments local
but we can {\it and should} be bounding local quantities, whether they be sublevel sets or oscillatory integrals.
Propositions \ref{vc-sublevel-first-derivative} and \ref{vc-complex-sublevel} are not central to the development
of our theory and so we provide proofs of these results in an appendix to this paper.

\section{Moving from analytic functions to polynomials}\label{analytic-versus-polynomial}

The argument at the outset for bounding the diameter of the sublevel set $\{ |f|\le \epsilon\}$
under a global condition $|f'|\ge 1$ no longer applies to the
the local sublevel set $\{ z : |f^{(k)}(z)|\ge 1, |f(z)|\le \epsilon\}$, even in the real setting. If we consider
the real function
\begin{equation}\label{example-real}
f(x) \ = \ \frac{2\sin(Nx)}{N} \ \ {\rm on} \ \ [0,1],
\end{equation}
we see that 
\begin{equation}\label{counterexample-real}
\bigl|\{ x \in [0,1] : |f'(x)| \ge 1, \ |f(x)| \le \epsilon \}\bigr| \ \sim \ 
\begin{cases} 
N \epsilon, & \ {\rm if} \ \epsilon < N^{-1} \\
1           \ \ & \ {\rm if} \ N^{-1} \le \epsilon \le 1 \\
\end{cases}.
\end{equation}
This is the real version of the example \eqref{counterexample}. Here the problem is that the
function $f$ and its derivative $f'$ are conspiring to produce many zeros of $f$ at places there
the derivative $f'$ is large. Hence, even in the real setting, there is {\bf no} scale-invariant bound
for local sublevel sets for general differentiable functions.

In both the real and the complex case, the examples \eqref{example-complex}
and \eqref{example-real}
are non-polynomial. If $f \in {\mathbb R}[X]$ has degree at most $d$, then the order structure
of ${\mathbb R}$ can once again be used to show that the local sublevel set
$$
S_{loc} \ = \ \bigl\{ x \in [0,1] :  |f^{(k)}(x)| \ \ge \ 1, \ |f(x)| \ \le \ \epsilon \bigr\}
$$
is a union $\cup I$ of at most $O_d(1)$ intervals and on each $I$, we can apply the argument
relying on the intermediate value theorem to obtain the scale-invariant bound $|S_{loc}| \le C_d \, \epsilon^{1/k}$.

There is an additional, less obvious, feature of the Hensel-like argument establishing bounds
such as \eqref{sublevel-vc-local}. The smallness condition $\epsilon \ll_{k,M} 1$ can be removed
if our function $f$ has the property that some derivative of bounded order has the global bound 
$|f^{(n)}(z)| \gtrsim M^{n/(n+1)}$ from below. 
Polynomials have this property and in fact satisfy a stronger property -- see Lemma \ref{triple-norm} below.

As a consequence, we can establish the following bound.

\begin{proposition}\label{poly-sublevel-derivative} Let $f \in {\mathbb C}[X]$ be a complex
polynomial of degree $d$. Then there is a constant $C_d$, depending only on $d$, such that 
\begin{equation}\label{sublevel-vc-local-poly}
\bigl| \{ z \in {\mathbb D} : |f^{(k)}(z)| \ge 1, \  |f(z)| \le \epsilon \}\bigr| \ \le \ C_d \, \epsilon^{2/k} 
\end{equation}
holds for any $k\ge 1$.
\end{proposition}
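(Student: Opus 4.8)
The plan is to reduce Proposition \ref{poly-sublevel-derivative} to Proposition \ref{vc-complex-sublevel} (and its local refinement \eqref{sublevel-vc-local}) by exploiting the special features of polynomials: a uniform bound on the size of the polynomial relative to its coefficients, and the fact that a polynomial of degree $d$ has derivatives of all orders $\le d$. The first step is a normalization. Fix $f \in {\mathbb C}[X]$ of degree $d$ and a disc where we want the estimate; after an affine change of coordinates we may assume we are working on ${\mathbb D}$ with ${\mathbb D}_2$ available, as in the hypotheses of Proposition \ref{vc-complex-sublevel}. Set $M := \|f\|_{L^\infty({\mathbb D}_2)}$. The crucial polynomial fact — presumably the content of the promised Lemma \ref{triple-norm}, which I would invoke — is that for a degree-$d$ polynomial, $M$ is comparable (up to a $C_d$ factor) to $\max_{|\alpha|\le d} |\partial^\alpha f(0)/\alpha!|^{?}$ in an appropriate sense; more precisely, some derivative $f^{(m)}$ with $0 \le m \le d$ satisfies a lower bound of the shape $\sup_{{\mathbb D}} |f^{(m)}| \gtrsim_d M^{m/(m+1)}$, or better. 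This is exactly the ``stronger property'' alluded to just before the statement.

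The second step is to dispose of the smallness restriction $\epsilon \ll_{k,M} 1$ present in \eqref{sublevel-vc-local}. I would split into two ranges. When $\epsilon$ is small relative to a threshold depending only on $d$ (after the normalization, $M$ is no longer a free parameter in a bad way because of the polynomial bound from Lemma \ref{triple-norm}), we apply \eqref{sublevel-vc-local} directly with the implicit dependence on $M$ absorbed into a dependence on $d$, using that $M \le C_d$ once we further rescale $f$ by dividing by a suitable power related to $\inf |f^{(k)}|$ — or, more cleanly, we observe that on the local sublevel set where $|f^{(k)}| \ge 1$, the relevant scales are controlled. When $\epsilon$ is \emph{not} small, i.e. $\epsilon \gtrsim_d 1$, the bound $C_d \epsilon^{2/k}$ is trivially true provided $C_d$ is chosen large enough, since the left-hand side is at most $|{\mathbb D}| = \pi$. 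So the whole difficulty is concentrated in the genuinely small-$\epsilon$ regime, and there the point is to make the threshold depend only on $d$, not on $f$.

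The third step, which I expect to be the main obstacle, is precisely this elimination of the $M$-dependence in the smallness condition and in the constant. The naive application of Proposition \ref{vc-complex-sublevel} gives $M^{2(k-1)/k} N' \epsilon^{2/k}$ with $\epsilon \ll_k M^{-(2k-1)}$; for a degree-$d$ polynomial $N' \le C_d$ trivially (the number of zeros of $f,\dots,f^{(k-1)}$ is at most $dk$), but $M^{2(k-1)/k}$ can be large and the smallness threshold $M^{-(2k-1)}$ can be tiny. One cannot just rescale $f \mapsto \mu f$ to kill $M$ because that destroys the hypothesis $|f^{(k)}| \ge 1$. The resolution should use Lemma \ref{triple-norm}: because $f$ is a polynomial with $|f^{(k)}| \ge 1$ somewhere relevant, its $L^\infty$ norm on a slightly larger disc is automatically $\lesssim_d 1$ — a polynomial cannot be simultaneously huge in sup-norm and have a bounded-below $k$-th derivative on ${\mathbb D}$ without the sup-norm being controlled by a $d$-dependent constant, essentially by comparing coefficients. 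More concretely, one argues that $M = \|f\|_{L^\infty({\mathbb D}_2)}$ is bounded by a $d$-dependent multiple of $\|f\|_{L^\infty({\mathbb D})}$, and on ${\mathbb D}$ the condition $|f^{(k)}| \ge 1$ forces, via repeated integration along segments and the polynomial structure, a two-sided comparison of $M$ with $1$ up to $C_d$. Once $M \le C_d$, the threshold $M^{-(2k-1)}$ becomes $\gtrsim_d 1$, the prefactor $M^{2(k-1)/k}$ becomes $\le C_d$, and combined with the trivial bound for $\epsilon \gtrsim_d 1$ we obtain \eqref{sublevel-vc-local-poly} with a constant depending only on $d$. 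I would therefore structure the proof as: (i) state and use Lemma \ref{triple-norm} to get $\|f\|_{L^\infty} \lesssim_d$ (a controllable quantity) on the relevant discs; (ii) apply \eqref{sublevel-vc-local} / Proposition \ref{vc-complex-sublevel} in the small-$\epsilon$ range with all $M$'s now $d$-dependent; (iii) handle $\epsilon \gtrsim_d 1$ trivially via the area of ${\mathbb D}$; (iv) sum up, possibly decomposing ${\mathbb D}$ into $O_d(1)$ pieces around the zeros of $f,\dots,f^{(k-1)}$ so that on each piece the structural ``union of $\epsilon^{1/k}$ discs'' statement applies cleanly.
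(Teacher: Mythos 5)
Your plan hinges on the claim that the hypothesis $|f^{(k)}|\ge 1$ on ${\mathbb D}$ forces the sup-norm $M=\|f\|_{L^\infty({\mathbb D}_2)}$ to be bounded above by a constant depending only on $d$. That claim is false, and it is the load-bearing step in your reduction to Proposition \ref{vc-complex-sublevel}. For example, $f(z)=N(z-2)^d$ has $|f^{(k)}(z)|\ge N\,d!/(d-k)!\ge 1$ on ${\mathbb D}$ for all $1\le k\le d$ and all $N\ge 1$, yet $M\sim N\cdot 4^d\to\infty$. More generally, the hypothesis $|f^{(k)}|\ge 1$ is preserved under multiplying $f$ by any constant $\ge 1$, so it can only give a lower bound on $M$, never an upper one. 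Lemma \ref{triple-norm} does not rescue this: it says some derivative $f^{(n)}$ is {\it bounded below} by $c_d\|f\|$ on ${\mathbb D}$ (which again is a lower-bound statement tied to $\|f\|$), and gives no control of $\|f\|$ in terms of a one-sided condition like $|f^{(k)}|\ge 1$. Consequently, the smallness threshold $\epsilon\ll_k M^{-(2k-1)}$ and the prefactor $M^{2(k-1)/k}$ in Proposition \ref{vc-complex-sublevel} are not eliminated by your normalization, and your two-range argument (small $\epsilon$ versus $\epsilon\gtrsim_d 1$) does not close: the threshold between the two ranges still depends on $f$, not just on $d$.

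The paper does not obtain Proposition \ref{poly-sublevel-derivative} by black-boxing Proposition \ref{vc-complex-sublevel}. Instead it proves the sharper structural statement Proposition \ref{sublevel-structure-basic} (every point of the local sublevel set is within $O_d(\epsilon^{1/k})$ of a zero of some derivative $Q^{(j)}$), and Corollary \ref{sublevel-structure-balls} then gives both the ball containment and the measure bound for all $R,\epsilon,\mu>0$ by a scaling reduction to the small-$\epsilon$ regime. The crucial difference from your proposal is {\it how} the constants are controlled: the Hensel iteration (Lemma \ref{hensel-L}) is run directly on derivatives $\phi=Q^{(n-j)}$ with $n$ chosen via Lemma \ref{triple-norm} so that $|Q^{(n)}(z)|\gtrsim_d\lambda$ {\it everywhere} on ${\mathbb D}$; the quantities $\lambda\delta$, $\delta_1$, etc.\ appearing in Lemma \ref{hensel-L} then involve ratios in which the potentially huge $\lambda\sim M$ cancels against this lower bound, rather than being controlled by an upper bound on $M$. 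Your instinct that Lemma \ref{triple-norm} is central, and that the ball-containment structure around zeros of $f,\dots,f^{(d)}$ is the right picture, is correct; but to make the argument work you must open up the Hensel lemma and propagate the $\lambda$-cancellation through the estimates, as the paper does, rather than cite Proposition \ref{vc-complex-sublevel} as a black box.
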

The bound \eqref{sublevel-vc-local-poly} in
Proposition \ref{poly-sublevel-derivative} is scale-invariant. As an immediate consequence, we see that
\begin{equation}\label{sublevel-vc-local-poly-again}
\bigl| \{ z \in {\mathbb D}_R : |f^{(k)}(z)| \ge \mu, \  |f(z)| \le \epsilon \}\bigr| \ \le \ C_d \, 
(\epsilon/\mu)^{2/k} 
\end{equation}
holds for any 
$f\in {\mathbb C}[X]$ of degree at most $d$, for any $k\ge 1$ and for any $R, \mu, \epsilon>0$. 

Here we use the notation ${\mathbb D}_R(z_0) = \{z \in {\mathbb C}: |z-z_0| \le R\}$ to denote
the disc of radius $R$ with centre $z_0$. Also we denote ${\mathbb D}_R(0)$ by ${\mathbb D}_R$ and ${\mathbb D}$
denotes the unit disc.
When we move to higher dimensions, we will denote 
${\mathbb B}^n_R({\underline{z}}_0) = \{{\underline{z}} \in {\mathbb C}^n: |{\underline{z}} - {\underline{z}}_0| \le R\}$
as the ball in ${\mathbb C}^n$ with centre ${\underline{z}}_0$ and radius $R$ with similar conventions for balls
centred at the origin and ${\mathbb B}^n$ denotes the unit ball in ${\mathbb C}^n$.

The local nature of the arguments allow us to extend the above result to polynomials of several variables.
Furthermore we are able to treat local sublevel sets defined by general linear partial differential operators
$Lf({\underline{z}}) = \sum_{1\le |\alpha| \le k} c_{\alpha}({\underline{z}}) \partial^{\alpha} f({\underline{z}})$
with general bounded measurable coefficients $\{c_{\alpha}({\underline{z}})\}$.
\begin{proposition}\label{poly-sublevel-several} Let $f \in {\mathbb C}[X_1,\ldots, X_n]$ be a complex
polynomial in $n$ variables of degree $d$ and let $L$ be as above. Then there is a constant $C_{d,n,L}$, depending only on $d, n$ and the
$L^{\infty}$ norms of the coefficients $\{c_{\alpha}({\underline{z}})\}$, 
such that 
\begin{equation}\label{sublevel-vc-local-poly-several}
\bigl| \{ {\underline{z}} \in {\mathbb B}^n : |Lf({\underline{z}})| \ge \mu, \  
|f({\underline{z}})| \le \epsilon \}\bigr| \ \le \ C_{d,n,L} \, (\epsilon/\mu)^{2/k} 
\end{equation}
holds for any $\mu, \epsilon>0$. If $L = \sum_{|\alpha|=k} c_{\alpha}({\underline{z}}) \partial^{\alpha}$
is  homogeneious of degree $k$, then
\begin{equation}\label{sublevel-vc-local-poly-several-R}
\bigl| \{ {\underline{z}} \in {\mathbb B}_R^n : |Lf({\underline{z}})| \ge \mu, \  
|f({\underline{z}})| \le \epsilon \}\bigr| \ \le \ C_{d,n,L} \, R^{2(n-1)} \, (\epsilon/\mu)^{2/k} 
\end{equation}
holds for any $R, \mu, \epsilon>0$. 
\end{proposition}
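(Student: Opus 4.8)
\textbf{Proof plan for Proposition \ref{poly-sublevel-several}.}

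The plan is to bootstrap from the one-variable polynomial bound \eqref{sublevel-vc-local-poly} (or rather \eqref{sublevel-vc-local-poly-again}) by a slicing argument, handling the complications of a general variable-coefficient operator $L$ via a pigeonhole/averaging step. First I would reduce to the case of a pure monomial operator $\partial^{\alpha}$ with a single $\alpha$, $|\alpha| = k$: since $|Lf({\underline z})| \ge \mu$ forces $|c_\alpha({\underline z}) \partial^\alpha f({\underline z})| \ge \mu / N$ for at least one of the $N = O_{d,n}(1)$ multi-indices appearing in $L$, and $|c_\alpha| \le \|c_\alpha\|_\infty$, we get $|\partial^\alpha f({\underline z})| \ge \mu/(N \|c_\alpha\|_\infty) =: \mu'$ on that piece; a union bound over the finitely many $\alpha$ then recovers the full estimate with a constant depending on $n,d$ and the $L^\infty$ norms of the coefficients. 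So it suffices to bound $|\{{\underline z} \in {\mathbb B}^n : |\partial^\alpha f({\underline z})| \ge \mu', \ |f({\underline z})| \le \epsilon\}|$ by $C_{d,n}(\epsilon/\mu')^{2/k}$.

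Next, fix such an $\alpha$ with $|\alpha| = k$, and pick a coordinate, say $z_1$, with $\alpha_1 \ge 1$; write $\partial^\alpha = \partial_{z_1}^{\alpha_1} \partial_{z_2}^{\alpha_2}\cdots$. For almost every fixed value $\underline{z}' = (z_2,\ldots,z_n)$ of the remaining variables, the function $z_1 \mapsto g_{\underline{z}'}(z_1) := (\partial_{z_2}^{\alpha_2}\cdots\partial_{z_n}^{\alpha_n} f)(z_1, \underline{z}')$ is a one-variable complex polynomial of degree at most $d$, and on its slice of the sublevel set we have $|g_{\underline{z}'}^{(\alpha_1)}(z_1)| = |\partial^\alpha f({\underline z})| \ge \mu'$ together with $|f(z_1,\underline{z}')| \le \epsilon$. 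The issue is that the latter controls $f$, not $g_{\underline{z}'}$. To close this I would instead apply \eqref{sublevel-vc-local-poly-again} directly to the polynomial $h_{\underline{z}'}(z_1) := f(z_1,\underline{z}')$, whose $\alpha_1$-th derivative in $z_1$ is \emph{not} $\partial^\alpha f$ unless $\alpha_2 = \cdots = \alpha_n = 0$. Thus the genuinely clean case is $\alpha = (k,0,\ldots,0)$: here the $z_1$-slice $h_{\underline{z}'}$ has degree $\le d$, satisfies $|h_{\underline{z}'}^{(k)}| \ge \mu'$ and $|h_{\underline{z}'}| \le \epsilon$ on the slice of the local sublevel set, so \eqref{sublevel-vc-local-poly-again} gives that this slice has one-dimensional measure $\le C_d (\epsilon/\mu')^{2/k}$; integrating over $\underline{z}' \in {\mathbb B}^{n-1}$ (of bounded measure) by Fubini yields the claim in this case. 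For a general $\alpha$ one further decomposes: apply the above slicing to the variable $z_1$ but to the \emph{intermediate} polynomial $F := \partial_{z_2}^{\alpha_2}\cdots\partial_{z_n}^{\alpha_n} f$ to control $\{|\partial_{z_1}^{\alpha_1} F| \ge \mu', \ |F| \le \delta\}$, and then separately relate $\{|f| \le \epsilon\}$ to $\{|F| \le \delta\}$ using the sublevel bounds for the lower-order derivatives $\partial_{z_j}$ applied iteratively in the remaining variables — this is exactly the "union of small discs centred at zeros of $f$ and its derivatives" structural picture alluded to after \eqref{sublevel-vc-local}. Optimising the intermediate threshold $\delta$ then reproduces the exponent $2/k$.

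For the homogeneous refinement \eqref{sublevel-vc-local-poly-several-R}, I would run the same argument on ${\mathbb B}_R^n$: now the inner $z_1$-slices are discs of radius $O(R)$, and \eqref{sublevel-vc-local-poly-again} already allows arbitrary radius, contributing $C_d(\epsilon/\mu)^{2/k}$ per slice with no $R$-dependence (homogeneity of $L$ of degree $k$ is what keeps the lower bound on $\partial^\alpha f$ genuinely scale-clean, since rescaling $z \mapsto Rz$ turns $|Lf| \ge \mu$ into a bound of the same shape); integrating over $\underline{z}' \in {\mathbb B}_R^{n-1}$, whose measure is $\sim R^{2(n-1)}$, produces the stated factor $R^{2(n-1)}$. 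The main obstacle I anticipate is the combinatorial bookkeeping in the general-$\alpha$ reduction: getting the iterated slicing to interact correctly with the requirement that only the \emph{local} hypothesis $|\partial^\alpha f| \ge \mu$ (not a global lower bound) is available, and making sure the finitely many thresholds introduced at each stage can be chosen — via a geometric sequence — so that the final exponent is exactly $2/k$ and not something worse like $2/(k \cdot n)$. Keeping track of which derivative controls which slice, and invoking the scale-invariant one-variable statement \eqref{sublevel-vc-local-poly-again} at each stage rather than the non-scale-invariant analytic Proposition \ref{vc-complex-sublevel}, is what makes the constant depend only on $d, n$ and $\|c_\alpha\|_\infty$.
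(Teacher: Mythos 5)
Your reductions are correct and match the paper's: the union bound over the finitely many multi-indices $\alpha$ appearing in $L$, the rescaling ${\underline z} \mapsto R {\underline z}$ to pass from \eqref{sublevel-vc-local-poly-several-R} to \eqref{sublevel-vc-local-poly-several}, and the Fubini slicing for $\alpha = (k,0,\ldots,0)$ using the scale-invariant one-variable bound \eqref{sublevel-vc-local-poly-again} are all exactly what the paper does (the paper routes this through Proposition \ref{poly-sublevel-H}, but the structure is the same). The point where your plan comes apart is the general mixed index $\alpha$, and it is precisely the step you flagged as an anticipated obstacle.

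Your proposed fix — slice $z_1$ for the intermediate polynomial $F = \partial_{z_2}^{\alpha_2}\cdots\partial_{z_n}^{\alpha_n} f$ to control $\{|\partial_{z_1}^{\alpha_1}F| \ge \mu',\ |F| \le \delta\}$, then ``separately relate $\{|f| \le \epsilon\}$ to $\{|F| \le \delta\}$'' — does not close. There is no measure-theoretic inclusion between $\{|f|\le\epsilon\}$ and sublevel sets of the derivative $F$: a point where $f$ is small gives no information about the size of $F$, and the local hypothesis $|\partial^\alpha f| \ge \mu$ holds only on the set you are trying to estimate, not globally, so no intermediate threshold $\delta$ lets you chain the two. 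An iterated slicing in the remaining variables, even optimised, would at best reproduce the exponent $2/k$ only after considerably more structure than is available; as stated the step simply doesn't go through.

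The missing idea is Lemma \ref{unit} (Section \ref{prelims}): for each $k$ there exist unit vectors ${\underline u}_1,\ldots,{\underline u}_{d(n,k)}$ so that $\{({\underline u}_j\cdot{\underline z})^k\}$ spans the space of degree-$k$ homogeneous polynomials, hence any $\partial^\alpha$ with $|\alpha|=k$ is a fixed $\mathbb{C}$-linear combination of the pure directional derivatives $({\underline u}_j\cdot\nabla)^k$ (equation \eqref{partial-u}). At any point where $|\partial^\alpha f| \ge \mu'$ you therefore have $|({\underline u}_j\cdot\nabla)^k f| \gtrsim_{d,n}\mu'$ for at least one $j$. Applying a rotation sending ${\underline u}_j \mapsto e_1$ turns this into $|\partial_1^k (f\circ R_j^{-1})|\gtrsim\mu'$, i.e.\ exactly your ``clean case'' $\alpha = (k,0,\ldots,0)$. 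After this single algebraic step the rest of your argument — slicing in $z_1$, applying \eqref{sublevel-vc-local-poly-again} on each slice of a one-variable polynomial of degree $\le d$, and integrating over ${\underline z}' \in {\mathbb B}^{n-1}$ — runs without modification and produces the clean exponent $2/k$ and the constant depending only on $d,n,L$. The rescaling then gives the $R^{2(n-1)}$ factor in \eqref{sublevel-vc-local-poly-several-R} as you described.
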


{\bf Remarks}: The bounds \eqref{sublevel-vc-local-poly-several} and \eqref{sublevel-vc-local-poly-several-R} are scale-invariant.
In paricular to prove these bounds, by scaling $f$ by $\mu^{-1} f$, we can reduce to the case $\mu = 1$. Furthermore,
by a change of variables ${\underline{z}} = R {\underline{w}}$, the proof of \eqref{sublevel-vc-local-poly-several-R}
can be reduced to the case $R=1$ which is then subsumed by \eqref{sublevel-vc-local-poly-several}.

The polynomial $f({\underline{z}}) = g(z_1)$ could depend only on one variable
and so we see that the exponent $2/k$ is optimal (consider $g(z_1) = z_1^k$ and $L = \partial^k_1$).
And for the same reason, we see why the factor $R^{2(n-1)}$
is present.

When we consider general partial differential operators, even in the real ${\mathbb R}$ setting, there are
no scale-invariant bounds for general analytic functions. By Runge's theorem, one can show that for every $\epsilon>0$,
there is an analytic real-valued function $f$ with $\Delta f \ge 1$ on ${\mathbb D}$ such that
$|\{z \in {\mathbb D}: |f(z)| \le \epsilon\}| \ge 1$. Here $\Delta = \partial^2_x + \partial^2_y$ is the Laplace operator.

\section{The $H$ functional}\label{H-functional}

In the study of oscillatory integrals for real-valued phases, there is a very useful functional which we now consider
for complex functions $f$:
$$
H_{f,R} \ := \ \inf_{{\underline{z}}\in {\mathbb B}_R^n} H_f({\underline{z}}) \ \ {\rm where} \ \ 
H_f({\underline{z}}) \ = \ \max_{|\alpha|\ge 1} \bigl(\bigl| \partial^{\alpha} f({\underline{z}})/\alpha! |^{1/|\alpha|} \bigr).
$$
The estimate in Proposition \ref{poly-sublevel-several} can be extended in the following way.

\begin{proposition}\label{poly-sublevel-H} Let $n,d \in {\mathbb N}$ be given.
There exists a constant $C_{d,n}$, depending only on $d$ and $n$, such that for
any $f \in {\mathbb C}[X_1,\ldots, X_n]$ of degree $d$ and for any $R>0$,
we have
\begin{equation}\label{sublevel-poly-H}
\sup_{a \in {\mathbb C}} \
\bigl| \{ {\underline{z}} \in {\mathbb B}_R^n :  
|f({\underline{z}}) - a| \le 1 \}\bigr| \ \le \ C_{d,n} \, R^{2(n-1)} \, H_{f,R}^{-2}.
\end{equation}
There is a corresponding local version: 
\begin{equation}\label{sublevel-poly-H-local}
\sup_{a \in {\mathbb C}} \
\bigl| \{ {\underline{z}} \in {\mathbb B}_R^n :   H_f({\underline{z}}) \ge H, \ 
|f({\underline{z}}) - a| \le 1 \}\bigr| \ \le \ C_{d,n} \, R^{2(n-1)} \, H^{-2}
\end{equation}
holds for any $H>0$.
\end{proposition}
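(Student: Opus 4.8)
The plan is to deduce Proposition~\ref{poly-sublevel-H} from Proposition~\ref{poly-sublevel-several} by decomposing the $H$-sublevel set according to which partial derivative realizes the maximum defining $H_f$. First I would observe that it suffices to prove the local version \eqref{sublevel-poly-H-local}, since \eqref{sublevel-poly-H} follows by taking $H = H_{f,R}$ (by definition $H_f({\underline{z}}) \ge H_{f,R}$ for every ${\underline{z}} \in {\mathbb B}_R^n$, so the constraint $H_f({\underline{z}}) \ge H$ is automatically satisfied on the whole ball). Also, by replacing $f$ with $f-a$ (which changes neither the degree nor $H_f$, since only derivatives of order $\ge 1$ enter $H_f$), it suffices to treat $a = 0$.

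Next, fix $H>0$ and consider ${\underline{z}}$ in the set on the left of \eqref{sublevel-poly-H-local}. By definition of $H_f({\underline{z}})$, the condition $H_f({\underline{z}}) \ge H$ means there exists a multi-index $\alpha$ with $|\alpha| \ge 1$ such that $|\partial^{\alpha} f({\underline{z}})/\alpha!|^{1/|\alpha|} \ge H$, i.e. $|\partial^{\alpha} f({\underline{z}})| \ge \alpha!\, H^{|\alpha|}$. Since $f$ has degree $d$, only finitely many multi-indices $\alpha$ with $1 \le |\alpha| \le d$ can occur (derivatives of higher order vanish identically), and the number of such $\alpha$ is bounded by a constant depending only on $d$ and $n$. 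Hence the $H$-sublevel set is contained in the union, over these $O_{d,n}(1)$ multi-indices $\alpha$, of the sets
\[
E_\alpha \ = \ \bigl\{ {\underline{z}} \in {\mathbb B}_R^n : |\partial^{\alpha} f({\underline{z}})| \ge \alpha!\, H^{|\alpha|}, \ |f({\underline{z}})| \le 1 \bigr\}.
\]
For each fixed $\alpha$, I would apply \eqref{sublevel-vc-local-poly-several} (or its $R$-scaled version \eqref{sublevel-vc-local-poly-several-R} when $R \ne 1$) with the linear partial differential operator $L = \partial^{\alpha}$, which is homogeneous of order $k := |\alpha|$, with $\mu = \alpha!\, H^{|\alpha|}$ and $\epsilon = 1$. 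This gives
\[
|E_\alpha| \ \le \ C_{d,n} \, R^{2(n-1)} \, \bigl(1/(\alpha!\,H^{|\alpha|})\bigr)^{2/|\alpha|} \ = \ C_{d,n}' \, R^{2(n-1)} \, H^{-2},
\]
where I used $(\alpha!)^{-2/|\alpha|} \le 1$ (or simply absorbed it into the constant, since $\alpha! \ge 1$). Summing over the $O_{d,n}(1)$ admissible $\alpha$ yields \eqref{sublevel-poly-H-local} with a constant depending only on $d$ and $n$. Taking $H = H_{f,R}$ then gives \eqref{sublevel-poly-H}.

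The only subtle point — and the one I expect to require the most care — is the bookkeeping that makes the final constant depend only on $d$ and $n$ and not on $f$ or $H$ or $R$. This rests on two facts: that a degree-$d$ polynomial in $n$ variables has only $O_{d,n}(1)$ nonvanishing partial derivatives, so the union is over a $(d,n)$-controlled index set; and that the exponent in \eqref{sublevel-vc-local-poly-several-R} is exactly $2/|\alpha| = 2/k$, which combines with $\mu^{2/k} = (H^{|\alpha|})^{2/|\alpha|} = H^2$ to produce the clean $H^{-2}$ with no residual dependence on $|\alpha|$. One should also note that \eqref{sublevel-vc-local-poly-several-R} requires $L$ to be \emph{homogeneous} of degree $k$ in order to carry the $R^{2(n-1)}$ weight; since $\partial^\alpha$ is indeed homogeneous of degree $|\alpha|$, this hypothesis is met, so no extra argument is needed to handle general $R$.
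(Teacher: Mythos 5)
Your proposal has a circularity problem in the context of this paper. You deduce Proposition~\ref{poly-sublevel-H} from Proposition~\ref{poly-sublevel-several} (the several-variable, arbitrary-$L$ bound \eqref{sublevel-vc-local-poly-several-R}), but the paper never gives an independent proof of Proposition~\ref{poly-sublevel-several}: in Section~\ref{H-functional} the paper derives \eqref{sublevel-vc-local-poly-several} and \eqref{sublevel-vc-local-poly-several-R} \emph{from} \eqref{sublevel-poly-H-local}, by scaling $P=\epsilon^{-1}f$ and decomposing over $\alpha$ --- exactly the reverse of what you are doing. So invoking Proposition~\ref{poly-sublevel-several} here is invoking a result whose only proof in the paper presupposes the statement you are trying to establish.

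The missing ingredient is the reduction to one complex variable. Your opening moves --- passing to the local version, setting $a=0$, and splitting according to which $\alpha$ achieves $|\partial^\alpha f(\underline{z})/\alpha!|^{1/|\alpha|}\ge H$ --- all match the paper's Section~\ref{Prop 4.1}. But from there the paper does not cite a multi-variable sublevel estimate; instead it uses Lemma~\ref{unit} to write $\partial^\alpha$ as a finite linear combination of powers of directional derivatives $(\underline{u}_j\cdot\nabla)^k$, so that on each piece $S_{\alpha,j}(H)$ one has $|(\underline{u}_j\cdot\nabla)^k f(\underline{z})|\gtrsim_{d,n} H^k$. An orthogonal change of variables turns $\underline{u}_j\cdot\nabla$ into $\partial_1$, and then one slices in $\underline{w}'\in\mathbb{B}^{n-1}$ and applies the genuinely one-dimensional estimate \eqref{sublevel-estimate} of Corollary~\ref{sublevel-structure-balls} (which in turn rests on the structural Proposition~\ref{sublevel-structure-basic} and the Hensel-type Lemma~\ref{hensel-L}) to each line. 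That is where the real content lives. Your bookkeeping observations about the $O_{d,n}(1)$ union and the exponent $2/|\alpha|$ cancelling $H^{|\alpha|}$ to produce $H^{-2}$ are correct, but they need to be married to this directional-derivative-plus-slicing argument rather than to Proposition~\ref{poly-sublevel-several}.
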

The bounds \eqref{sublevel-poly-H} and \eqref{sublevel-poly-H-local} are not scale-invariant due
the nonlinear nature of the functional $H_f$. However the bound \eqref{sublevel-poly-H-local} 
implies the scale-invariant bounds \eqref{sublevel-vc-local-poly-several} and \eqref{sublevel-vc-local-poly-several-R}. 
In fact, as remarked above, the bound \eqref{sublevel-vc-local-poly-several-R} reduces to \eqref{sublevel-vc-local-poly-several}
and furthermore, we may assume $\mu=1$. Finally to prove \eqref{sublevel-vc-local-poly-several} with $\mu =1$, we may
also assume $\epsilon \le 1$; otherwise, the trivial bound gives the desired result.
 
Setting $P = \epsilon^{-1} f \in {\mathbb C}[X_1, \ldots, X_n]$, we have
$$
 \bigl\{ {\underline{z}} \in {\mathbb B}^n : |Lf({\underline{z}})| \ge 1, \  
|f({\underline{z}})| \le \epsilon \bigr\} \subseteq  \bigcup_{1\le |\alpha| \le k} \bigl\{ {\underline{z}} \in {\mathbb B}^n :
|\partial^{\alpha}P({\underline{z}})| \ge (A\epsilon)^{-1}, \ |P({\underline{z}})| \le 1 \bigr\}
$$
where $A = C_{d,n} \max_{\alpha} \|c_{\alpha}\|_{\infty}$. For each $1\le |\alpha| \le k$, let $S_{\alpha}$ denote
the corresponding set on the right-hand side above. Note that
$$
S_{\alpha} \ \subseteq \ \bigl\{ {\underline{z}} \in {\mathbb B}^n :
H_P({\underline{z}}) \ge (A\epsilon)^{-1/|\alpha|}, \ |P({\underline{z}})| \le 1 \bigr\}
$$
and so \eqref{sublevel-poly-H-local} with $a=0$ implies $|S_{\alpha}| \lesssim_{d,n} \max(1, A^2) \, \epsilon^{2/|\alpha|}$.
But $\epsilon^{2/|\alpha|} \le \epsilon^{2/k}$ since $\epsilon \le 1$ and therefore
$$
\bigl| \{\{ {\underline{z}} \in {\mathbb B}^n : |Lf({\underline{z}})| \ge 1, \  
|f({\underline{z}})| \le \epsilon \}\bigr| \ \le \ C \, \epsilon^{2/k}
$$
where $C$ depends only on $d,n$ and the $L^{\infty}$ norms of the coefficients $\{c_{\alpha}\}$. Hence
\eqref{sublevel-vc-local-poly-several} holds.

\subsection*{The case $n=1$}
Consider the case of polynomials of a single variable $f \in {\mathbb C}[X]$, say of 
degree $d$. Let $z_{*} \in {\mathbb D}_R$
be a point where $H_{f,R} = H_f(z_{*})$. Then for $a = f(z_{*})$, we have $|f(z) - a| \le 1$
whenever $|z - z_{*}| \le (4 H_{f,R})^{-1}$. If fact $|f(z) - a| =$
$$
|f(z) - f(z_{*})| \ = \ \bigl| \sum_{k=1}^d f^{(k)}(z_{*})/k! (z - z_{*})^k \bigr| \ \le \ 
\sum_{k=1}^d (2 H_f)^k (4 H_f)^{-k} \ \le \ \sum_{k=1}^{\infty} 2^{-k} \ = \ 1.
$$
Hence if $\Lambda_R(f) := \min(R, H_{f,R}^{-1})$, then
$$
\Lambda_R(f)^2\\ \lesssim |{\mathbb D}_{(4H_{f,R})^{-1}}(z_{*}) \cap {\mathbb D}_R| \le  \sup_{a \in {\mathbb C}} 
\bigl| \{ z \in {\mathbb D}_R: |f(z) - a | \le 1 \}\bigr| \lesssim_d \, \Lambda_R(f)^2,
$$
illustrating the usefulness of the functional $H_f$ for sublevel set bounds. In particular,  we have
\begin{equation}\label{H-sublevel-equivalence}
\min(R, H_{f,R}^{-1})^2 \ \ \sim_d \ \
\sup_{a \in {\mathbb C}} \ \bigl| \{ z \in {\mathbb D}_R : \, |f(z) - a| \le 1 \}\bigr|
\end{equation}
and we can apply Proposition 3.3 from \cite{KW} (see also \cite{PS-II}) in the complex field setting which states
the following: if $f(z) = a \prod_{j=1}^m (z - w_j)^{e_j} \in {\mathbb C}[X]$, then
$$
\bigcup_{j=1}^m \bigl[B_{2^{-d} r_j}(w_j) \cap {\mathbb D}_R\bigr] \ \subseteq \
\bigl\{ z \in {\mathbb D}_R : |f(z)| \ \le \ 1 \bigr\} \ \subseteq \ 
\bigcup_{j=1}^m \bigl[B_{2^{d} r_j}(w_j) \cap {\mathbb D}_R\bigr] 
$$
where
$$
r_j \ = \ \min_{{\mathcal C}\ni w_j} \, \Bigl[ \frac{1}{|a \prod_{w_k\notin {\mathcal C}} (w_j - w_k)^{e_k}|} 
\Bigr]^{1/S({\mathcal C})}.
$$
Here the minimum is taken over all {\it root clusters} ${\mathcal C} \subset \{w_1, \ldots, w_m\}$
containing $w_j$
and $S({\mathcal C}) = \sum_{w_k\in {\mathcal C}} e_k$. Hence
$$
\max_{j\in J} \ \min\Bigl(R, \inf_{w_j \in {\mathcal C}}
\, \Bigl[ \frac{1}{|a \prod_{w_k\notin {\mathcal C}} (w_j - w_k)^{e_k}|} 
\Bigr]^{1/S({\mathcal C})}\Bigr)^2 \ \lesssim_d \ \min(R, H_{f,R}^{-1})^2
$$
holds where $J = \{1\le j \le m: B_{2^{-d} r_j}(w_j) \cap {\mathbb D}_R \not= \emptyset\}$.

Momentarily we will see a similar but stronger relationship between $H_f$ and the roots of the derivative $f'$
of our polynomial.

\section{Oscillatory integrals with complex-valued phases}\label{oscillatory-int-complex}

In the Introduction we introduced oscillatory integrals
$$
I_{\phi}(f) \ = \ \int_{{\mathbb C}^n} {\rm e} (f({\underline{z}})) \, \phi({\underline{z}}) \, d{\underline{z}}
$$
with complex phases $f : {\mathbb C}^n \to {\mathbb C}$. Here
$\phi \in C^{\infty}_c ({\mathbb C}^n)$ is a smooth cut-off function.

The integrals $I_{\phi}(f)$ are connected to complex sublevel sets in the same way
that real oscillatory integrals are
connected to real sublevel sets. Consider the sublevel set
$$
S_{a,\epsilon}^R \ = \ \bigl\{ {\underline{z}} \in {\mathbb B}_R^n : \, |f({\underline{z}})-a| \ \le \ \epsilon \bigr\}
$$
and nonnegative functions $\phi_R \in C^{\infty}_c({\mathbb C}^n)$ such that 
$\phi_R \equiv 1$ on ${\mathbb B}^n_R$ and $\psi \in C^{\infty}_c({\mathbb C})$ such that $\psi \equiv 1$
on ${\mathbb D}$.
Then
$$
|S_{a,\epsilon}^R| \le \int_{{\mathbb C}^n} \psi((f({\underline{z}})-a)/\epsilon) \, \phi_R({\underline{z}}) \, d 
{\underline{z}} \ = \ \int_{{\mathbb C}} {\tilde{\psi}}(w) e(-a w/\epsilon) \Bigl[ \int_{{\mathbb C}^n} 
e(w f({\underline{z}})/\epsilon) \, \phi_R({\underline{z}}) \, d {\underline{z}} \Bigr] \, d w
$$
where ${\tilde{\psi}}(w_1 + i w_2) = 2 {\widehat{\psi}}(u,v)$ and $w_1 = (u+v)/2$ and $w_2 = (u-v)/2$.
In fact if we set ${\mathfrak u} = [{\rm Re} f] /\epsilon$ and ${\mathfrak v}= [{\rm Im} f]/\epsilon$, then by the fourier inversion formula,
$$
\psi({\mathfrak u}, {\mathfrak v}) \ = \ \int_{{\mathbb R}^2} {\widehat{\psi}}(u,v) 
e^{2\pi i [u {\mathfrak u} + v {\mathfrak v}]} \, du dv \ = \ 2
\int_{{\mathbb C}} {\tilde{\psi}}(w_1+ i w_2) {\rm e}((w_1+ i w_2) f({\underline{z}})) \, d w .
$$
Therefore we have
\begin{equation}\label{sublevel-osc}
|S_{a,\epsilon}^R| \ \le \ \int_{{\mathbb C}} |{\tilde \psi}(w)| \, |I_{\phi_R}(w f/\epsilon)| \, dw
\end{equation}
where ${\tilde{\psi}}$ is a Schwartz function on ${\mathbb C}$. Hence bounds for $I_{\phi_R}(w f)$
give bounds for $S_{a,\epsilon}^R$. 

When $n=1$, suppose we have a general complex differentiable function $f$ on the disc ${\mathbb D}_2$
which satisfies $|f''(z)| \ge 1$ on ${\mathbb D}_2$. Then a scale-invariant bound of the form
$$
|I_{\phi}(\lambda f)| \ \le \ C_{\phi} \, |\lambda|^{-1}
$$
where $\phi \in C^{\infty}_c({\mathbb D}_2)$ cannot hold with a constant $C_{\phi}$ only depending on $\phi$.
If such a scale-invariant bound were true, then \eqref{sublevel-osc} would imply the scale-invariant
bound
$$
|S_{a,\epsilon}^1| \ \le \ C_{\phi} \, \int_{{\mathbb C}} \min(1, \epsilon/|w|) \, |{\tilde{\psi}}(w)| \, dw
\ \lesssim_{\phi} \ \epsilon
$$
for sublevel sets which we have observed is impossible. 

We can relate derivatives of the real phase $\Phi(z) = {\rm Re}f(z) + {\rm Im} f(z)$ 
defining $I_{\phi}(f)$
to complex derivatives of $f$ when $n=1$. By the Cauchy-Riemann equations, we have
$\|\nabla \Phi(z)\| = \sqrt{2} |f'(z)|$ where $\|\cdot\|$ is the euclidean norm on ${\mathbb R}^2$.
Furthermore we have $|{\rm det} ({\rm Hess} \Phi(z))| = 4 |f''(z)|^2$.

For the convenience of the reader, we recall the statement of Theorem \ref{H-main-osc}, our main
oscillatory integral bound for $I_{\phi}(f)$ when the phase $f \in {\mathbb C}[X_1,\ldots, X_n]$
is a polynomial. Recall 
the $H$ functional adapted to the support of $\phi$:
$$
H_{f,\phi} \ := \ \inf_{{\underline{z}}\in {\rm supp}(\phi)} H_f({\underline{z}}) \ \ {\rm where} \ \ 
H_f({\underline{z}}) \ = \ \max_{|\alpha|\ge 1} \bigl(\bigl| \partial^{\alpha} f({\underline{z}})/\alpha! |^{1/|\alpha|} \bigr).
$$

{\bf Theorem 1.1.} \ {\it For any $f \in {\mathbb C}[X_1, \ldots, X_n]$ of degree $d$ and for
any $\phi \in C^{\infty}_c({\mathbb C}^n)$, we have
\begin{equation}\label{I-H-bound}
|I_{\phi}(f)| \ \le \ C \ H_{f,\phi}^{-2}
\end{equation}
where $C = C_{d,n,\phi}$ only depends on $d,n$ and $\phi$.}

{\bf Remark}: \, If our smooth cut-off function is of the form $\phi_R({\underline{z}}) = \varphi(R^{-1}{\underline{z}})$
for some normalised bump function $\varphi$ (say ${\rm supp}(\varphi) \subseteq {\mathbb B}^n$ or
$\varphi \equiv 1$ on ${\mathbb B}^n$ and ${\rm supp}(\varphi) \subseteq {\mathbb B}^n_2$), then a change of
variables shows
$$
I_{\phi_R}(f) \ = \ R^{2n} I_{\varphi}(Q_R) \ \ {\rm where} \ \  Q({\underline{w}}) \ = \ f(R {\underline{w}})
$$
and since $H_{Q,1} = R H_{f,R}$, we see that \eqref{I-H-bound} implies
\begin{equation}\label{min-R}
|I_{\phi_R}(f)| \ \le \ C_{d,n,\varphi} \, R^{2(n-1)} \, \min(R, H_{f,R}^{-1})^2.
\end{equation}

As a consequence of Theorem \ref{H-main-osc} we have the following scale-invariant bound.

{\bf Corollary 1.2.} \ {\it Let $P \in {\mathbb C}[X_1,\ldots, X_n]$ have degree $d$
and suppose $|\partial^{\alpha}P({\underline{z}})/\alpha!| \ge 1$ 
for ${\underline{z}} \in {\rm supp}(\phi)$. Then for $\lambda \in {\mathbb C}$,
\begin{equation}\label{I-derivative-bound}
|I_{\phi}(\lambda P)| \ \le \ C_{d,n,\phi} \ |\lambda|^{-2/|\alpha|}.
\end{equation}}

\begin{proof} We apply Theorem \ref{H-main-osc} to the polynomial $f({\underline{z}}) = \lambda P({\underline{z}})$.
Note that for all ${\underline{z}} \in {\rm supp}(\phi)$, we have
$|\partial^{\alpha} f({\underline{z}})/\alpha!| = |\lambda| |\partial^{\alpha}P({\underline{z}})/\alpha!| 
\ge |\lambda|$
and so
$$
H_{f}({\underline{z}})^{|\alpha|} \ \ge \ |\partial^{\alpha} f({\underline{z}})/\alpha!| \ \ge \ |\lambda|,
$$
implying $H_{f,\phi} \ge |\lambda|^{1/|\alpha|}$. Hence the bound \eqref{I-H-bound} implies \eqref{I-derivative-bound}.
\end{proof}

Now consider a polynomial $f \in {\mathbb C}[X]$ in one variable and fix $R>0$. 
Putting \eqref{min-R} together with \eqref{H-sublevel-equivalence} and \eqref{sublevel-osc}, we have the
following observations: let $\phi_R(z) = \varphi(R^{-1}z)$ 
where $\varphi \in C^{\infty}_c({\mathbb D})$ such
that $\varphi \equiv 1$ on ${\mathbb D}_{1/2}$. Then
$$
\boxed{|I_{\phi_R}(f)| \ \lesssim_{d,\varphi}  \ \min(R, H_{f,R}^{-1})^2 \ \lesssim \ 
\sup_{a \in {\mathbb C}} \ \bigl| \{ z \in {\mathbb D}_R : \, |f(z) - a| \le 1 \}\bigr| }
$$
and so
$$
|I_{\phi_R}(f)| \ \lesssim_{d,\varphi} \ 
\int_{{\mathbb C}} |\psi(w)| 
|I_{\phi_{2R}}(w f)| \, dw 
$$
where $\psi$ is some fixed Schwartz function on ${\mathbb C}$ (whose fourier transform is nonnegative
and larger than 1 on ${\mathbb D}$). 

So here we have succeeded in controlling oscillatory integrals
with a general polynomial phase (in one variable) by the measure of sublevel sets.
This reverses the usual relationship and one can deduce oscillatory integral bounds from sublevel set bounds.
In particular we can bound
individual oscillatory integrals by an average of oscillatory integrals.

\section{Sharpness of \eqref{I-H-bound} in Theorem \ref{H-main-osc} when $n=1$}\label{sharpness}

The real analogue of the bound \eqref{I-H-bound} can be found in the book \cite{ACK}
where it is shown that 
\begin{equation}\label{I-H-real}
\Bigl| \int_{[0,1]^n} e^{2\pi i f({\underline{x}})} \, d{\underline{x}} \Bigr| \ \le \ C_{d,n} \, H_f^{-1}
\end{equation}
holds for any $f \in {\mathbb R}[X_1,\ldots, X_n]$ of degree at most $d$. Here $H_f = H_{f,1}$.
The bound \eqref{I-H-real}
is proved by applying the classical van der Corput estimates, together with many applications of
the intermediate value theorem. 

When $n=1$, it is shown in \cite{ACK}
that given any $f \in {\mathbb R}[X]$, there is a $c = c(f) \in [0,1]$ such that
\begin{equation}\label{H-c}
\Bigl| \int_0^c e^{2\pi i f(x)} \, dx \Bigr| \ \sim_d \ H_f^{-1}
\end{equation}
whenever $H_f \ge 1$. The proof of \eqref{H-c} relies heavily on the order structure
of the reals ${\mathbb R}$ and we do not know how to prove an analogous statement with truncations
for oscillatory integrals $I_{\phi}(f)$ with complex-valued phases. However in this section,
an alternative to \eqref{H-c} will be proposed which unforunately will have limited use. 

The asymptotic bound \eqref{H-c} for some $c \in [0,1]$ shows the sharpness of the bound \eqref{I-H-real} and it
can be used to compare the bound \eqref{I-H-real} to other known bounds which
are robust under truncations of oscillatory integrals. 

For instance, if the derivative $f'(x) = a \prod_{j=1}^m (x - z_j)^{e_j}$ has distinct roots $\{z_j\}$
with $\sum_j e_j = d-1$ where $d = {\rm deg}(f)$,
then a sharp bound due to Phong and Stein \cite{PS-I} is the following: for any $a < b$,
\begin{equation}\label{PS-integral-real}
\Bigl| \int_a^b e^{2\pi i f(x)} \, dx \Bigr| \ \le \ C_d \ \max_{1\le k \le m} \min_{{\mathcal C}\ni z_k}
\Bigl[\frac{1}{|a \prod_{z_j\not= {\mathcal C}} (z_k - z_j)^{e_j}}\Bigr]^{1/(S({\mathcal C}) + 1)}
\end{equation}
where the minimum is taken over all {\it root clusters} ${\mathcal C} \subseteq \{z_j\}$ containing
$z_k$ and $S({\mathcal C}) := \sum_{z_j \in {\mathcal C}} e_j$.
The constant $C_d$ only depends on the degree of $f$ and can be taken to be
independent of $a$ and $b$. Therefore by \eqref{H-c}, the bound \eqref{PS-integral-real} implies
\begin{equation}\label{PS-H-real}
\min_k \, \max_{{\mathcal C}\ni z_k} \
\bigl[ |a \prod_{z_j\notin {\mathcal C}} (z_k - z_j)^{e_j}\bigr]^{1/(S({\mathcal C}) + 1)} \ \le \
C_d \, H_f .
\end{equation}
In Section \ref{Phong-Stein}, we will give a direct proof of \eqref{PS-H-real} which will also hold for
complex polynomials $f \in {\mathbb C}[X]$. As a consequence, Theorem \ref{H-main-osc} will
imply a complex version of the Phong-Stein bound as stated in Proposition \ref{PS-intro}.


When $n=1$, we saw that $\sup_{a\in {\mathbb C}} |S_{a,1}^R| \sim_d \min(R, H_{f.R}^{-1})^2$ and so
the functional $H_{f,R}$ is precisely the right quantity which controls the measure of sublevel sets $S_{a,1}^R$.
Here we will see 
the usefulness of a variant of $H_{f,\phi}$:
$$
J_{f,\phi} \ = \ \inf_{{\underline{z}}\in {\rm supp}(\phi)} J_f({\underline{z}}) \ \ {\rm where} \ \ 
J_f({\underline{z}}) \ = \ \max_{|\alpha|\ge 2} 
\bigl( |\partial^{\alpha} f({\underline{z}})/\alpha!|^{1/|\alpha|}\bigr).
$$
For sharp cut-offs, we write $J_{f,R} = \inf_{{\underline{z}} \in {\mathbb B}^n_R} J_f({\underline{z}})$.
We have $J_{f,\phi} \le H_{f,\phi}$ and so Theorem \ref{H-main-osc} implies
\begin{equation}\label{I-J-bound}
|I_{\phi}(f)| \ \le \ C_{d,n,\phi} \, J_{f,\phi}^{-2}
\end{equation}
which is a bound that does not depend on the linear coefficients of $f$ and
so gives bounds which are robust under linear perturbations of the phase. This is 
useful in many problems. 

Furthermore if our cut-off is of the form $\phi_R({\underline{z}}) = \varphi(R^{-1}{\underline{z}})$
for some normalised bump function $\varphi$, then as in \eqref{min-R}, we have
\begin{equation}\label{min-R-again}
|I_{\phi_R}(f)| \ \le \ C_{d,n,\varphi} \, R^{2(n-1)} \, \min(R, J_{f,R}^{-1})^2.
\end{equation}

For $f \in {\mathbb C}[X_1,\ldots, X_n]$, we write 
$f_{a,{\underline{b}}}({\underline{z}}) = f({\underline{z}}) - a - {\underline{b}}\cdot {\underline{z}}$
where $a\in {\mathbb C}$ and ${\underline{b}} \in {\mathbb C}^n$. We note that
$J_{f_{a,{\underline{b}}}, R} = J_{f,R}$ and since $J_{f,R} \le H_{f,R}$, we see that 
\eqref{sublevel-poly-H} in Proposition \ref{poly-sublevel-H} implies
\begin{equation}\label{sublevel-a-b}
\sup_{a, {\underline{b}}} \, \bigl| \{ {\underline{z}} \in {\mathbb B}^n_R: |f({\underline{z}}) - a - 
{\underline{b}} \cdot {\underline{z}}| \le 1 \}\bigr| 
\ \le \  C_{d,n} \, R^{2(n-1)} \ \min(R, J_{f,R}^{-1})^2.
\end{equation}

Consider the case $n=1$ and $f \in {\mathbb C}[X]$. Let $z_{*} \in {\mathbb D}_R$
be a point where $J_{f,R} = J_{f,R}(z_{*})$. Then for $a = f(z_{*}) - f'(z_{*}) z_{*}$ and $b = f'(z_{*})$, 
we have $|f(z) - a - b z| \le 1$
whenever $|z - z_{*}| \le (4 J_{f,R})^{-1}$. If fact
$$
|f(z) - f(z_{*}) - f'(z_{*})(z-z_{*})| \ = \ \bigl| \sum_{k=2}^d f^{(k)}(z_{*})/k! (z - z_{*})^k \bigr| \ \le \ 
\sum_{k=2}^d (2 J_{f,R})^k (4 J_{f,R})^{-k} 
$$
which is at most $1/2$. Hence by \eqref{sublevel-a-b}, if $\Omega_{R}(f) := \min(R, J_{f,R}^{-1})$,
$$
\Omega_{R}(f)^2 \ \lesssim \ |{\mathbb D}_{(4J_{f,R})^{-1}}(z_{*}) \cap {\mathbb D}_R| \ \le \ \sup_{a, b \in {\mathbb C}} \
\bigl| \{ z \in {\mathbb D}_R: |f(z) - a - b z| \le 1 \}\bigr| \ \lesssim_d  \ \Omega_{R}(f)^2 
$$
and so
\begin{equation}\label{sublevel-a-b-again}
\Omega_{R}(f)^2 \ \sim_d \ \sup_{a,b \in {\mathbb C}} \, \bigl| \{ z \in {\mathbb D}_R: |f(z) - a - b z| \le 1 \}\bigr|.
\end{equation}
This illustrates the usefulness of the $J$ functional for sublevel set bounds. 

Fix $\varphi \in C^{\infty}_c({\mathbb D})$ such that $\varphi \equiv 1$ on ${\mathbb D}_{1/2}$ and
set $\phi_R(z) = \varphi(R^{-1} z)$. 
Arguing as in \eqref{sublevel-osc}, we have
\begin{equation}\label{sublevel-osc-again}
\sup_{a, b} \ \bigl| \{ z \in {\mathbb D}_R: 
|f(z) - a - b z| \le 1 \}\bigr| \ \le \  
\sup_{b \in {\mathbb C}} \,  
\int_{{\mathbb C}} |{\tilde \psi}(w)| \, |I_{\phi_{2R}}(w f_{b})| \, dw
\end{equation}
where $f_{b}(z) = f(z) - b z$.

Now set 
$$
\alpha_{f,R,\varphi} \ = \ \sup_{w, b \in {\mathbb C}} \ \bigl[\Omega_{2R}(w f)^{-2} |I_{\phi_{2R}}(wf_b)|\bigr].
$$ 
From
\eqref{min-R-again}, we have $\alpha_{f,R,\varphi} \le C_{d,\varphi}$ and we seek a lower bound.
Importantly for us, the $J$ functional scales like $J_{w f, R}^{-2} \le |w|^{-1} J_{f,R}^{-2}$
for $w \in {\mathbb D}$ and $|w|^{-1}$ is integrable on ${\mathbb D}$. 
We note that $J_{w f_{b}, 2R} = J_{wf, 2R} \ge \min(|w|^{1/2}, |w|^{1/d}) \, J_{f,2R}$ and so 
$$
\sup_{b \in {\mathbb C}} \,  
\int_{{\mathbb C}} |{\tilde \psi}(w)| \, |I_{\phi_{2R}}(w f_{b})| \, dw
\ \le \  \alpha_{f,R,\varphi} \
\Omega_{2R}(f)^{2} \  
\int_{{\mathbb C}} |{\tilde \psi}(w)| \, \frac{1}{\min(|w|, |w|^{2/d})} \, dw ,
$$
which implies, by \eqref{sublevel-a-b-again} and
\eqref{sublevel-osc-again}, 
$$
\Omega_{R}(f)^2 \ \lesssim_d \ \int_{{\mathbb C}} |{\tilde \psi}(w)| \, |I_{\phi_{2R}}(w f_{b})| \, dw
\ \lesssim_{d} \ \alpha_{f, R,\varphi} \ \Omega_{2R}(f)^2
$$
and so
$$
\Bigl[\frac{\min(1, J_{Q,1}^{-1})}{\min(1, J_{Q,2}^{-1})}\Bigr]^2 \ = \ 
\frac{\min(R, J_{f,R}^{-1})^2}{\min(R, J_{f,2R})^2} \ \lesssim_{d} \ \alpha_{f,R,\phi}
$$
where $Q(w) = f(Rw)$. Here we used the observation $J_{Q,1} = R J_{f,R}$.

As a consequence, we see that $c_d \le \alpha_{f,R,\phi} \le C_{d,\phi}$ where
$$
c_d \ := \ \inf_{Q \in {\mathcal P}_d} \ \Bigl[\frac{\min(1, J_{Q,1}^{-1})}{\min(1, J_{Q,2}^{-1})}\Bigr]^2
$$
and ${\mathcal P}_d$ is the space of all polynomials $Q \in {\mathbb C}[X]$ of degree at most $d$.
Hence
for any $f\in {\mathbb C}[X]$ of degree at most $d$, there exist $w, b \in {\mathbb C}$ such that 
$$
\boxed{c_{d} \ \min(R, J_{w f, R}^{-1})^2 \ \le \ \Bigl| \int_{{\mathbb C}} e(w [f(z) - b z]) \, \phi(R^{-1}z) \, dz \Bigr| 
\ \le \ C_{d,\phi} \ \min(R, J_{w f, R}^{-1})^2 .}
$$
This is our alternative to \eqref{H-c}.

Unforunately this nice general lower bound for oscillatory integrals is not very useful since
the constant $c_d$ turns out to be zero whenever $d\ge 3$!
For $f(z) = a + b z + c z^2$, we have $J_{f,R} = |c|^{1/2}$ for any $R$ and
so $c_2 = 1$. To see $c_d = 0$ when $d\ge 3$,
consider $f(z) = a(z - 3/2)^d$ and note that $J_{Q,2} \le |a|^{1/d}$ but
$\max_{k\ge 2} |a|^{1/k} \lesssim J_{Q,1}$. Hence
$\min(1, J_{Q,1}^{-1})/\min(1,J_{Q,2}^{-1}) \lesssim |a|^{-(d-2)/2d}$ as $|a| \to \infty$.

However there are certain subspaces of polynomials of degree at most $d$ where
a uniform lower bound is possible. For example when $d=4$, the subspace ${\mathcal P} := 
\{a + bz + c z^3 + d z^4\}$ of quartics with no quadratic term has the property
that 
$$
J_{Q,1} \ \sim \ J_{Q,2} \ \sim \ \max( |c|^{1/3}, |d|^{1/4} )
$$
for all $Q \in {\mathcal P}$.
Hence 
$$
\inf_{Q \in {\mathcal P}} \ \frac{\min(1, J_{Q,1}^{-1})}{\min(1, J_{Q,2}^{-1})}
\ \sim \ 1
$$
and so the above {\it boxed} statement can be applied to those $f$ in the subspace ${\mathcal P}$. 

\section*{{\bf \large Proofs of Proposition \ref{poly-sublevel-H} and Theorem \ref{H-main-osc}}}

We now give the details of the proofs of the main sublevel set bound, Proposition \ref{poly-sublevel-H},
and the main oscillatory integral bound, Theorem \ref{H-main-osc}.

\section{Preliminaries for the proof of Proposition \ref{poly-sublevel-H} and Theorem \ref{H-main-osc}}\label{prelims}

Let ${\mathbb C}[X_1, \ldots, X_n]$ denote the space of complex polynomials in $n$ variables.
For a multi-index $\alpha = (\alpha_1, \ldots, \alpha_n) \in {\mathbb N}^n$, we set 
$$
{\underline{z}}^{\alpha} \ = \ z_1^{\alpha_1} \cdots z_n^{\alpha_n}
$$
so that a polynomial $P \in {\mathbb  C}[X_1,\ldots, X_n]$ of degree at most $d$ can be written as
$$
P({\underline{z}}) \ = \ \sum_{|\alpha|\le d} c_{\alpha} {\underline{z}}^{\alpha} .
$$
Here $|\alpha| := \alpha_1 + \cdots + \alpha_n$.

For a multi-index $\alpha \in {\mathbb N}^n$
and $P \in {\mathbb C}[X_1, \ldots, X_n]$, we set
$$
\partial^{\alpha} P  \ := \ 
\frac{\partial^{|\alpha|} P}{\partial^{\alpha_1}z_1 \cdots \partial^{\alpha_n} z_n} \ \ \ {\rm and} \ \ \ 
\alpha! = \alpha_1! \alpha_2! \cdots \alpha_n!.
$$
Let $V_{n,k}$ denote the complex vector space of homogeneous polynomials in ${\mathbb C}[X_1, \ldots, X_n]$
of degree $k$. So $Q \in V_{n,k}$ means 
$Q({\underline{z}}) = \sum_{|\alpha| = k} b_{\alpha} \, {\underline{z}}^{\alpha}$.
The hermitian inner product
$$
\langle P, Q \rangle \ := \ \sum_{|\alpha|=k} \alpha! \,  a_{\alpha} {\overline{b_{\alpha}}}
$$
gives $V_{n,k}$ a Hilbert space structure. If $Q({\partial}) = \sum_{|\alpha| = k} b_{\alpha} \partial^{\alpha}$
denotes the corresponding differential operator, then note that
$$
{\overline{Q}}(\partial) P ({\underline{z}}) \ \equiv \ 
\langle P, Q \rangle \ \ \ {\rm where} \ \ \ {\overline{Q}}({\underline{z}}) = \sum_{|\alpha|=k} {\overline{b_{\alpha}}}
\ {\underline{z}}^{\alpha}.
$$
We need the following well-known fact (see for example, \cite{Stein-big}).

\begin{lemma}\label{unit}
Let $d(n,k)$ denote the dimenson of the Hilbert space $V_{n,k}$. There exists a sequence
of unit vectors ${\underline{u}}_j, j=1,\ldots, d(n,k)$ in ${\mathbb C}^n$ such that 
$$
Q_j({\underline{z}})\  := \  ({\underline{u}}_j \cdot {\underline{z}} )^k, \ \ j=1, \ldots, d(n,k)
$$
forms a basis for $V_{n,k}$.
\end{lemma}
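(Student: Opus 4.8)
The plan is to prove Lemma \ref{unit} by exhibiting unit vectors ${\underline u}_j$ whose associated polynomials $Q_j({\underline z}) = ({\underline u}_j \cdot {\underline z})^k$ span $V_{n,k}$; equivalently, since $\dim V_{n,k} = d(n,k)$, it suffices to produce $d(n,k)$ such vectors that are linearly independent in $V_{n,k}$. The natural strategy is a dimension count plus a Zariski-density argument: first observe that the powers $({\underline u}\cdot{\underline z})^k$, as ${\underline u}$ ranges over all of ${\mathbb C}^n$ (not yet normalised), span $V_{n,k}$. This is the classical fact that powers of linear forms span the space of homogeneous polynomials of a given degree; one sees it by expanding $({\underline u}\cdot{\underline z})^k = \sum_{|\alpha|=k} \binom{k}{\alpha} {\underline u}^\alpha {\underline z}^\alpha$ via the multinomial theorem, so the span is the image of the linear map sending the function ${\underline u} \mapsto ({\underline u}^\alpha)_{|\alpha|=k}$ into the coefficient space, and then arguing that no nonzero linear functional on $V_{n,k}$ can annihilate all of these: such a functional corresponds to a homogeneous polynomial $P$ of degree $k$ with $P({\underline u}) = 0$ for all ${\underline u} \in {\mathbb C}^n$, forcing $P \equiv 0$.

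Next I would upgrade "spanning as ${\underline u}$ ranges over ${\mathbb C}^n$" to "spanning by finitely many ${\underline u}_j$ of unit length." Since the $Q_{\underline u} := ({\underline u}\cdot{\underline z})^k$ span $V_{n,k}$, I can choose ${\underline v}_1, \ldots, {\underline v}_{d(n,k)} \in {\mathbb C}^n \setminus \{0\}$ so that $Q_{{\underline v}_1}, \ldots, Q_{{\underline v}_{d(n,k)}}$ form a basis. Because $Q_{c {\underline v}} = c^k Q_{\underline v}$ for any scalar $c \neq 0$, rescaling each ${\underline v}_j$ to the unit vector ${\underline u}_j := {\underline v}_j / |{\underline v}_j|$ only multiplies $Q_{{\underline v}_j}$ by a nonzero constant, hence preserves linear independence. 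Thus $Q_j = ({\underline u}_j \cdot {\underline z})^k$ with $|{\underline u}_j| = 1$ still form a basis for $V_{n,k}$, which is exactly the assertion.

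As an alternative, if one prefers to avoid the "powers of linear forms span" fact as a black box, one can argue by the nonvanishing of a suitable Wronskian-type or Vandermonde-type determinant: for generic choices of the ${\underline u}_j$ the matrix expressing $(Q_j)$ in the monomial basis $({\underline z}^\alpha)_{|\alpha|=k}$ of $V_{n,k}$ has nonzero determinant, because this determinant is a polynomial in the entries of the ${\underline u}_j$ that is not identically zero (it is nonzero, for instance, when the ${\underline u}_j$ are chosen along coordinate-type directions after a triangular reduction). Restricting to the unit sphere, which is Zariski-dense in ${\mathbb C}^n$ in the real-analytic sense, one still finds unit vectors where the determinant is nonzero. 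The main — though quite mild — obstacle is simply making the spanning statement airtight: verifying that powers of linear forms span $V_{n,k}$, either via the duality/annihilator argument above or via an explicit nonvanishing determinant. Everything after that is routine scaling and a dimension count.
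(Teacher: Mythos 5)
Your proof is correct and takes essentially the same approach as the paper: both reduce to showing that $\{(\underline{u}\cdot\underline{z})^k : \underline{u}\in\mathbb{C}^n\}$ spans $V_{n,k}$ by a duality argument (a functional annihilating all these forms gives a degree-$k$ polynomial vanishing identically on $\mathbb{C}^n$, hence zero), and then extract a basis. You are slightly more explicit than the paper about normalizing to unit vectors via $Q_{c\underline{v}} = c^k Q_{\underline{v}}$; the paper phrases the duality through its hermitian inner product $\langle\cdot,\cdot\rangle$ and the induced operator pairing $\overline{Q}(\partial)$, but the substance is identical.
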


\begin{proof} It suffices to show that ${\rm Span}\bigl\{ ({\underline{u}} \cdot {\underline{z}})^k : {\underline{u}}
\in {\mathbb C}^n \bigr\} = V_{n,k}$. Suppose not. Then since
$$
V_{n,k} \ = \ M \ \oplus \ M^{\perp}
$$
where $M = {\rm Span}\bigl\{ ({\underline{u}} \cdot {\underline{z}})^k : {\underline{u}}
\in {\mathbb C}^n \bigr\}$, we can find a non-zero $P \in M^{\perp}$. In particular, we have
$\langle P, Q_{\underline{u}} \rangle = 0$ for all ${\underline{u}} \in {\mathbb C}^n$ where
$Q_{{\underline{u}}}({\underline{z}}) = ({\underline{u}} \cdot {\underline{z}})^k$. Hence
$$
Q_{\underline{u}}(\partial) P ({\underline{z}}) \ = \ ({\underline{u}} \cdot \nabla )^k P({\underline{z}}) \
\equiv \ 0 \ \ {\rm for \ all} \ \  {\underline{u}} \in {\mathbb C}^n.
$$
Note that 
$$
 f^{(k)}(0) \ = \ ({\underline{u}} \cdot \nabla )^k P(0) \ = \ 0  \ \ \ {\rm where} \ \ \ 
f(z) = P(z {\underline{u}}) \ = \ \bigl[ \sum_{|\alpha| = k} a_{\alpha} {\underline{u}}^{\alpha} \bigr] \, z^k,
$$
implying $P({\underline{u}}) = 0$ for all ${\underline{u}} \in {\mathbb C}^n$. Hence we arrive at the contradiction 
$P = 0$.
\end{proof}

As a consequence of Lemma \ref{unit}, we see that for every $\alpha \in {\mathbb N}^n$
with $|\alpha| = k$, 
$$
{\underline{z}}^{\alpha} \ = \ \sum_{j=1}^{d(n,k)} c_j ({\underline{u}}_j \cdot {\underline{z}})^k
$$
for some choice of coefficients $c_j = c_j(\alpha) \in {\mathbb C}$. Hence we can write
\begin{equation}\label{partial-u}
\partial^{\alpha} \ = \ \sum_{j=1}^{d(n,k)} c_j \, ({\underline{u}}_j \cdot \nabla)^k
\end{equation}
for every $\alpha$ with $|\alpha| = k$.

\section{A structural sublevel set statement}\label{structural-statement}

Here we state a key sublevel set bound central to the proofs of Proposition \ref{poly-sublevel-H} and Theorem \ref{H-main-osc}.

\begin{proposition}\label{sublevel-structure-basic}
Let $Q \in {\mathbb C}[X]$ have degree $d$
and $z_0 \in {\mathbb D}$. Suppose $|Q^{(k)}(z_0)| \ge 1$ for some $k\ge 1$ and
$|Q(z_0)| \le \epsilon$ for some $0< \epsilon \le \epsilon_d$ where $\epsilon_d$ is a sufficiently small
positive constant, depending only on the degree of $Q$. Then there exists a zero $z_{*}$
of $Q^{(j)}$ for some $0\le j \le d$ such that $|z_0 - z_{*}| \lesssim_d  \, \epsilon^{1/k}$.
\end{proposition}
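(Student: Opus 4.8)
The plan is to run a Hensel/Newton-type iteration entirely inside the sublevel set, taking advantage of the fact that $Q^{(k)}(z_0)$ is already bounded below. First I would like to normalize and understand the scales involved. Write $Q(z_0 + h) = \sum_{j=0}^d a_j h^j$ where $a_j = Q^{(j)}(z_0)/j!$, so $|a_0| = |Q(z_0)| \le \epsilon$ and $|a_k| = |Q^{(k)}(z_0)/k!| \gtrsim_d 1$. The heuristic is that since the constant term is tiny ($\le\epsilon$) and the degree-$k$ coefficient is of size $\sim 1$, the polynomial in $h$ should vanish at some $h$ of size $\lesssim_d \epsilon^{1/k}$ — this is exactly the kind of Newton-polygon estimate one expects, but here we want the zero to be a zero of some derivative $Q^{(j)}$, not of $Q$ itself, because $Q$ may genuinely fail to vanish nearby (the real/imaginary parts can conspire, as in the exponential example of Section 2). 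So the actual target is a root of $Q^{(j)}$ for $0 \le j \le d$.

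The key step is a one-step descent lemma: I claim that if $|Q^{(k)}(z_0)| \ge 1$ and $|Q(z_0)| \le \epsilon$ with $\epsilon$ small (depending on $d$), then either $Q$ itself has a zero within $O_d(\epsilon^{1/k})$ of $z_0$, or else one can find a nearby point $z_1$ with $|z_1 - z_0| \lesssim_d \epsilon^{1/k}$ and a smaller index $k' < k$ such that $|Q^{(k')}(z_1)| \ge 1$ and $|Q^{(0)} \text{ or some intermediate derivative}|$ is controllably small at $z_1$ — the cleanest formulation is to pass to the derivative. Here is the cleaner route I would actually take: consider the function $g = Q^{(k-1)}$. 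We have $|g'(z_0)| = |Q^{(k)}(z_0)| \ge 1$, so $g$ is non-stationary at $z_0$ and, being a polynomial of degree $d - (k-1) \le d$, the sublevel-set machinery of Section 3 (Proposition \ref{poly-sublevel-derivative}, in the local, scale-invariant form \eqref{sublevel-vc-local-poly}) controls $\{|g| \le \delta\}$ near $z_0$. If $|g(z_0)| = |Q^{(k-1)}(z_0)|$ is itself small — say $\lesssim \epsilon^{1/k}$ — then $g$ has a genuine zero within $O_d(\epsilon^{1/k})$ of $z_0$ (a non-stationary holomorphic function that is small on a disc and large derivative has a zero nearby, by the argument-principle/Rouché estimate, or directly by the $n=1$ discussion around \eqref{H-sublevel-equivalence}), and that zero is a zero of $Q^{(k-1)}$, so we are done with $j = k-1$. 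Otherwise $|Q^{(k-1)}(z_0)|$ is not small, and I would feed this back: now $Q^{(k-1)}$ plays the role of a derivative that is bounded below, and I recurse downward on $k$, at each stage either locating a zero of some $Q^{(j)}$ within the accumulated distance $O_d(\epsilon^{1/k})$, or descending. The base case $k=1$ is the initial illustration of Section 2 made rigorous for polynomials: $|Q'| \ge 1$ near $z_0$ and $|Q(z_0)| \le \epsilon$ forces a zero of $Q$ within $O(\epsilon)$, hence within $O(\epsilon^{1/k})$.

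Let me restate the induction more carefully, since that is where the real content sits. I would induct on $k$. For the inductive step, set $\delta := \epsilon^{1/k}$. Taylor-expand: for $|h| \le A\delta$ with $A = A_d$ a large constant to be chosen,
$$
|Q(z_0 + h)| \ \ge \ \Bigl(\sum_{j : |a_j| \text{ "dominant"}} \Bigr) \cdots
$$
— more honestly, I compare the term $a_k h^k$ against the rest. On the circle $|h| = A\delta$ one estimates $|a_k h^k| = |a_k| A^k \delta^k \ge c_d A^k \epsilon$, while the lower-order terms $a_0, \dots, a_{k-1}$ contribute at most (for $a_0$) $\epsilon$ and (for $1 \le j \le k-1$) terms of size $|a_j| (A\delta)^j$. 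If all these lower-order $|a_j|(A\delta)^j$ were $\ll A^k\epsilon$ we could apply Rouché to $a_k h^k$ versus $Q(z_0+h)$ and conclude $Q$ has $k$ zeros in $|h| < A\delta$ — done with $j=0$. The obstruction is precisely when some $|a_j|(A\delta)^j \gtrsim A^k \epsilon$ for an intermediate $1 \le j \le k-1$; but such a $j$ satisfies $|a_j| \gtrsim_{A,d} \epsilon^{1 - j/k} = \epsilon^{(k-j)/k} \cdot$ (harmless constant), equivalently $|Q^{(j)}(z_0)| \gtrsim_d \epsilon^{(k-j)/k}$, and since $j \ge 1$ this means $Q^{(j)}$ has a large value — apply the inductive hypothesis to $Q^{(j)}$ (degree $\le d$) with the pair $(k - j,\ \text{small value of } Q^{(j)})$? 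This is not quite aligned, and reconciling the exponents is the one delicate bookkeeping point. The fix is to always arrange, at each stage, a derivative $Q^{(m)}$ with $|Q^{(m)}(z_{\mathrm{current}})| \ge 1$ and a lower derivative $Q^{(m')}$, $m' < m$, whose value at $z_{\mathrm{current}}$ is $\le \epsilon^{(m-m')/k}$ — so the "remaining exponent" always reads off against the original $k$ — and this is maintained because distances accumulate as a geometric-type series $\sum_m C_d^m \epsilon^{(\text{something})/k}$ dominated by $O_d(\epsilon^{1/k})$ once $\epsilon \le \epsilon_d$.

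The main obstacle, then, is not any single estimate but the exponent/scale bookkeeping across the induction: one must set up the inductive statement with exactly the right coupling between "which derivative is bounded below", "which lower derivative is small and how small", and "the target radius $\epsilon^{1/k}$", so that the recursion closes with a $d$-dependent constant and no exponential blow-up — and the smallness threshold $\epsilon \le \epsilon_d$ is what absorbs the finitely many constant factors $C_d, C_d^2, \dots, C_d^d$ that the iteration generates, exactly as flagged in the discussion after \eqref{sublevel-vc-local} about hiding the factors of $2$ into the smallness of $\epsilon$. Rouché's theorem (or the open mapping / argument principle bound) and the degree-$\le d$ sublevel estimate \eqref{sublevel-vc-local-poly} are the only external inputs.
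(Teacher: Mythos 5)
Your overall philosophy is right — target a zero of some derivative $Q^{(j)}$ rather than of $Q$ itself, and reach it by a Newton/Hensel iteration that stays inside the sublevel set — but the proposal has a genuine gap and a circularity, and it does not carry out the argument. The gap you flag yourself: your ``one-step descent lemma'' in the induction on $k$ never gets stated precisely, and you acknowledge (``this is not quite aligned, and reconciling the exponents is the one delicate bookkeeping point'') that you do not know how to couple ``which derivative is bounded below'', ``which lower derivative is small and by how much'', and the target radius $\epsilon^{1/k}$ so that the recursion closes with a $d$-dependent constant. That coupling \emph{is} the content of the proposition. The circularity: you list \eqref{sublevel-vc-local-poly} (Proposition \ref{poly-sublevel-derivative}) as an external input, but that estimate is derived in the paper \emph{from} Proposition \ref{sublevel-structure-basic} via Corollary \ref{sublevel-structure-balls}, so it cannot be used to prove it; also, a measure bound of type \eqref{sublevel-vc-local-poly} cannot by itself produce the structural conclusion that a zero of some $Q^{(j)}$ lies within $O_d(\epsilon^{1/k})$ of $z_0$.

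The key structural ingredient the proposal is missing is Lemma \ref{triple-norm}: for any polynomial $Q$ of degree $\le d$, there is some $n$ with $0 \le n \le d$ such that $|Q^{(n)}(z)| \ge c_d \max_j |c_j|$ \emph{uniformly on} ${\mathbb D}$, and consequently $|Q^{(j)}(z)| \lesssim_d |Q^{(n)}(z)|$ for all $j$ and all $z \in {\mathbb D}$. This ``globally large derivative'' $Q^{(n)}$ is what anchors the whole argument; it is the polynomial-specific fact that lets one remove the smallness threshold $\epsilon \ll_{k,M} 1$ from the general holomorphic setting (the point flagged in Section \ref{analytic-versus-polynomial}). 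With it in hand, the paper does \emph{not} induct on $k$: it forms the normalized quantities $K_j(z_0) = [c_{n-j}\epsilon^{-1}|Q^{(n-j)}(z_0)|]^{1/(n-j)}$ for $0\le j\le n-1$, observes $\epsilon^{-1/k} \lesssim_d K(z_0) := \max_j K_j(z_0)$, splits into finitely many cases according to which $K_j$ is extremal (and, within the general case, which of a chain of ratio conditions ${\mathcal S}_\ell$ hold), and in each case applies the higher-order Hensel lemma \ref{hensel-L} \emph{once}, with $\phi$ a suitable derivative $Q^{(n-j-1)}$ or $Q^{(n-\ell)}$ and $L$ chosen to match; the hypotheses $\lambda\delta,\delta_1\ll 1$ and $\delta_\ell\lesssim_d 1$ are then verified directly from the case conditions and \eqref{c's}, \eqref{rho's}, and the resulting zero is within $2|\phi(z_0)\phi'(z_0)^{-1}|\lesssim_d K(z_0)^{-1}\lesssim_d\epsilon^{1/k}$. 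To repair your write-up you would need (i) to prove something like Lemma \ref{triple-norm}, (ii) to replace the open-ended $k$-induction by a finite case analysis anchored by it, and (iii) to prove the Hensel-type lemma (the paper's Lemma \ref{hensel-L}) that you are implicitly invoking when you ask Rouch\'e to hand you a zero at the right scale under a sequence of ratio conditions on the derivatives.
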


Proposition \ref{sublevel-structure-basic} has the following consequence.

\begin{corollary}\label{sublevel-structure-balls}
Let $P \in {\mathbb C}[X]$ be a polynomials of degree $d$ and
set 
$$
{\mathcal Z} \ = \ \bigl\{z \in {\mathbb C}: P^{(j)}(z) = 0 \ {\rm for \ some} \ 0 \le j \le d \bigr\}.
$$ 
Then there is a $C_d >0$, depending only on $d$, such that 
\begin{equation}\label{ball-containment}
\bigl\{z \in {\mathbb D}_R :\, |P(z)| \le \epsilon, \, |P^{(k)}(z)| \ge \mu \bigr\} \ \subseteq \ 
\bigcup_{z_{*} \in {\mathcal Z}} {\mathbb D}_{C_d (\epsilon/\mu)^{1/k}}(z_{*})
\end{equation}
holds for any $R, \epsilon, \mu > 0$.

Hence there is a constant $C_d$ such that
\begin{equation}\label{sublevel-estimate}
\bigl|\{z \in {\mathbb D}_R: |P(z)| \le \epsilon, \, |P^{(k)}(z)| \ge \mu \}\bigr| \ \le \ C_d \, (\epsilon/\mu)^{2/k} 
\end{equation}
holds for all $R, \epsilon, \mu>0$. This 
is the bound \eqref{sublevel-vc-local-poly-again} stated after Proposition \ref{poly-sublevel-derivative}.
\end{corollary}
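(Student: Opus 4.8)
The plan is to deduce both conclusions from Proposition \ref{sublevel-structure-basic} by a \emph{local} rescaling. The measure bound \eqref{sublevel-estimate} will be an immediate consequence of the ball containment \eqref{ball-containment}: the set $\mathcal{Z}$ is the union of the zero sets of $P^{(0)},P^{(1)},\dots,P^{(d)}$, and since $\deg P^{(j)}=d-j$ (with $P^{(d)}$ a nonzero constant), $\#\mathcal{Z}\le d+(d-1)+\cdots+1=d(d+1)/2$; hence the right side of \eqref{ball-containment} is a union of at most $d(d+1)/2$ discs of radius $C_d(\epsilon/\mu)^{1/k}$, of total measure $\le \pi\,\tfrac{d(d+1)}{2}\,C_d^2\,(\epsilon/\mu)^{2/k}$. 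So the whole task is \eqref{ball-containment}, and I may assume $1\le k\le d$ since otherwise $P^{(k)}\equiv0$ and the left side is empty.

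To prove \eqref{ball-containment}, fix $R,\epsilon,\mu>0$ and a point $z$ in the set on the left; I note in advance that $R$ plays no role, which is why the bound is uniform in $R$. If $P(z)=0$ then $z\in\mathcal{Z}$ and there is nothing to do, so assume $P(z)\ne0$. I cannot feed $P$ and $z$ straight into Proposition \ref{sublevel-structure-basic}, which requires the base point to lie in the \emph{unit} disc and the sublevel parameter to be $\le\epsilon_d$. I would remove both restrictions at once by passing to
$$
\rho \ = \ \Bigl(\frac{\epsilon}{\epsilon_d\,\mu}\Bigr)^{1/k}, \qquad Q(w)\ :=\ \frac{\epsilon_d}{\epsilon}\,P(z+\rho w),
$$
so that $Q^{(j)}(w)=(\epsilon_d/\epsilon)\rho^{j}P^{(j)}(z+\rho w)$. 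Then $Q$ is again a polynomial of degree $d$, $|Q(0)|=(\epsilon_d/\epsilon)|P(z)|\le\epsilon_d$, and $|Q^{(k)}(0)|=(\epsilon_d/\epsilon)\rho^{k}|P^{(k)}(z)|\ge(\epsilon_d/\epsilon)\rho^{k}\mu=1$ by the choice of $\rho$. Applying Proposition \ref{sublevel-structure-basic} to $Q$ at $z_0=0\in\mathbb{D}$ gives a zero $w_*$ of $Q^{(j)}$ for some $0\le j\le d$ with $|w_*|\lesssim_d\epsilon_d^{1/k}$. Since $Q^{(j)}(w_*)=0$ precisely when $P^{(j)}(z+\rho w_*)=0$, the point $z_*:=z+\rho w_*$ lies in $\mathcal{Z}$ and $|z-z_*|=\rho|w_*|\lesssim_d\rho\,\epsilon_d^{1/k}=(\epsilon/\mu)^{1/k}$, which is \eqref{ball-containment} with $C_d$ equal to the implied constant of Proposition \ref{sublevel-structure-basic}.

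The step I expect to require the most care is calibrating the rescaling: $\rho$ must be chosen so that the two competing requirements $|Q(0)|\le\epsilon_d$ and $|Q^{(k)}(0)|\ge1$ hold \emph{simultaneously}, and one must then check that the distance bound returned by Proposition \ref{sublevel-structure-basic} rescales back to exactly $(\epsilon/\mu)^{1/k}$, with the factors of $\epsilon_d$ cancelling rather than producing a spurious constant or an $R$-dependence. This is exactly the ``additional, less obvious feature'' flagged in Section \ref{analytic-versus-polynomial}: for the polynomial class the Hensel-type smallness threshold $\epsilon\ll_d1$ is removable, and what removes it here is that an affine rescaling of a degree-$d$ polynomial is still a degree-$d$ polynomial, no matter how large $\rho$ (equivalently $\epsilon$ or $R$) is. Everything else is routine bookkeeping.
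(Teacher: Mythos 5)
Your argument is correct, and it is genuinely cleaner than the paper's proof of the same corollary. The paper rescales by dilating to the unit disc (setting $z = Rw$, $Q(w) = (\mu R^k)^{-1}P(Rw)$), which forces the quantity $\epsilon' = \epsilon/(\mu R^k)$ into the role of the sublevel parameter in Proposition \ref{sublevel-structure-basic}; since $\epsilon'$ can exceed $\epsilon_d$ when $R$ is small, the paper must split into two cases and, in the regime $\epsilon/(\mu R^k) \ge \epsilon_d$, run a separate direct argument locating a root of $P$ near the origin by factoring $P$ and comparing elementary symmetric functions of its roots. Your choice of recentering at the specific point $z$ and picking $\rho = (\epsilon/(\epsilon_d\mu))^{1/k}$ calibrates $|Q(0)|\le\epsilon_d$ and $|Q^{(k)}(0)|\ge 1$ simultaneously regardless of $R$, so the case split and the symmetric-function lemma are unnecessary; the only structural fact you use is that affine reparameterization preserves the degree of $P$, which is the same observation the paper makes but exploited more fully. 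Both routes deliver the same constant $C_d$ (depending only on $d$ via $\epsilon_d$ and the implied constant in Proposition \ref{sublevel-structure-basic}) and the same bound $\#\mathcal{Z}\le d(d+1)/2$ for the measure estimate, so nothing is lost; your version is shorter and makes the $R$-independence transparent from the outset rather than as the conclusion of a two-case analysis.
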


\begin{proof} We may suppose the local sublevel set 
$$
S_R \ :=  \bigl\{z \in {\mathbb D}_R : |P(z)| \le \epsilon,
|P^{(k)}(z)|\ge \mu\bigr\}
$$
is nonempty. We will make a reduction to the case $\epsilon/(\mu R^k) \le \epsilon_d$
where $\epsilon_d$ is the sufficiently small constant appearing in Proposition \ref{sublevel-structure-basic}.
This will allow us to invoke Proposition \ref{sublevel-structure-basic}. We note that to prove \eqref{sublevel-estimate},
this reduction is immediate since the trivial bound $|S_R| \le \pi R^2$ implies \eqref{sublevel-estimate}
when $\epsilon/(\mu R^k) \ge \epsilon_d$.

First consider the case $\epsilon/(\mu R^k) \ge \epsilon_d$. In this case, we claim that
there is a zero $z_{*} \in {\mathcal Z}$
such that $|z_{*}| \le B_d (\epsilon/\mu)^{1/k}$ for some sufficiently large $B_d$. If this is true,
then for any $z \in S_R$,
$$
|z - z_{*}| \ \le \ R + B_d (\epsilon/\mu)^{1/k}\ \le \ [1/\epsilon_d + B_d] \, (\epsilon/\mu)^{1/k}
$$
and so $z \in {\mathbb D}_{C_d (\epsilon/\mu)^{1/k}}(z_{*})$ for $C_d = B_d + \epsilon_d^{-1}$,
establishing \eqref{ball-containment}.

To prove the claim, we argue by contradiction and suppose that 
$|z_{*}| \ge B_d (\epsilon/\mu)^{1/k}$ for all $z_{*} \in {\mathcal Z}$.
Factor 
$$
P(z) \ = \ a_d z^d + \cdots + a_0 \ = \ a_d (z-z_1) \cdots (z- z_d)
$$ 
into linear factors and order the
roots $|z_d| \le |z_{d-1}| \le \cdots \le |z_1|$. If $B_d$ is large enough, we have
$|z - z_{*}| \sim |z_{*}|$ for all $z \in {\mathbb D}_R$ and $z_{*} \in {\mathcal Z}$.
Hence for any $z \in {\mathbb D}_R$, 
 $ |P(z)| \sim_d |a_d z_1 z_2 \cdots z_d|$ and in particular 
\begin{equation}\label{<eps}
 |a_d \, z_1 \cdots z_d| \ \lesssim_d \ \epsilon
\end{equation}
since we are assuming $S_R$ is nonempty. Next note that 
$$
P^{(k)}(z)/k! = a_k + (k+1) a_{k+1} z + \cdot + {d\choose k} a_d z^{d-k} = c_{k} a_d (z - \eta_1) \cdots
(z- \eta_{d-k})
$$
and so for all $z \in {\mathbb D}_R$, $|P^{(k)}(z)| \sim_d |a_d \eta_1 \cdots \eta_{d-k}| = |a_k|$.
But $a_k = \pm a_d s_{d-k}(z_1, \ldots, z_d)$ where $s_j$ is the $j$th elementary symmetric polynomial
in $d$ variables. Hence 
$$
|s_{d-k}(z_1, \ldots, z_d)| \ \lesssim_d \ |z_{d-k} \cdots z_1|
$$ 
by our ordering of the roots. Hence 
\begin{equation}\label{mu<}
\mu \ \le \ |P^{(k)}(z)| \ \lesssim_d \ |a_d \, z_{d-k} \cdots z_1|
\end{equation}
since we are assuming $S_R$ is nonempty. Putting \eqref{<eps} and \eqref{mu<} together, we have
$$
\mu \, |a_d \, z_1 \cdots z_d| \ \lesssim_d \ \mu \epsilon \ \lesssim_d \ \epsilon \, |a_d \, z_{d-k}\cdots z_1|
$$
and so $|z_d \cdots z_{d-k+1}| \lesssim_d \epsilon/\mu$. But since $|z_j| \ge B_d (\epsilon/\mu)^{1/k}$
for every $j$, we see $B_d (\epsilon/\mu) \lesssim_d \epsilon/\mu$ which is impossible if $B_d$ is chosen
large enough.

It remains to treat the case $\epsilon/(\mu R^k) \le \epsilon_d$. Write any $z \in {\mathbb D}_R$ as $z = R w$
where $w\in {\mathbb D}$ and consider the polynomial $Q(w) = (\mu R^k)^{-1} P(R w)$. Fix
any $z \in S_R$ so that $|Q(w)| \le \epsilon/(\mu R^k)$ and $|Q^{(k)}(w)|\ge 1$. Since 
$\epsilon' = \epsilon/(\mu R^k) \le \epsilon_d$, we can apply Proposition \ref{sublevel-structure-basic}
to conclude there is a zero $w_{*}$ of $Q^{(j)}$ for some $0\le j\le d$ 
(so that $z_{*} = R w_{*} \in {\mathcal Z}$) such that $|w - w_{*}| \lesssim_d {\epsilon'}^{1/k} =
R^{-1} (\epsilon/\mu)^{1/k}$. Hence $|z - z_{*}| = R|w - w_{*}| \lesssim_d (\epsilon/\mu)^{1/k}$,
establishing \eqref{ball-containment}.
\end{proof}

Proposition \ref{sublevel-structure-basic} is a consequence of the following higher order
Hensel lemma which in turn is a extension of Proposition 2.1 in \cite{W-JGA}
from the nonarchimedean setting.

\begin{lemma}\label{hensel-L}
Let $\phi \in {\mathcal H}({\mathbb D}_2)$ with $M = M_{\phi} := \max_{z\in {\mathbb D}_{7/4}} |\phi(z)|$. Fix $L\ge 1$.
Suppose $z_0 \in {\mathbb D}$ is a point where $\phi^{(k)}(z_0) \not= 0$  for each $1\le k \le L$. 
Set \\
$\delta := |\phi(z_0)\phi'(z_0)^{-1}\phi^{(L)}(z_0)^{-1}|$ and for $1\le k \le L-1$, set
$$
\delta_k \ := \ |\phi^{(k+1)}(z_0) \phi^{(k)}(z_0)^{-1} \phi(z_0) \phi'(z_0)^{-1}|.
$$
Suppose $\delta_k \lesssim_L 1,  \, 2\le k \le L-1$ and suppose $|\phi(z_0)\phi'(z_0)^{-1}| \le 1/8$.
Then if $\delta M, \, \delta_1 \ll_L 1$,  there is a $z\in {\mathbb D}_{5/4}$ such that
$$
(a) \ \phi(z) \ = \ 0 \ \ \ {\rm and} \ \ \ (b) \ |z-z_0| \ \le \  2 |\phi(z_0)\phi'(z_0)^{-1}|.
$$
\end{lemma}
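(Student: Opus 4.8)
The plan is to produce the zero $z$ by a single application of Rouch\'e's theorem on the disc $D := \mathbb{D}_{2r_0}(z_0)$, where $r_0 := |\phi(z_0)\phi'(z_0)^{-1}| \le 1/8$ (if $r_0 = 0$ then $z_0$ is already a zero, so assume $r_0 > 0$). Since $2r_0 \le 1/4$ and $z_0 \in \mathbb{D}$, the closed disc $\overline{D}$ lies inside $\mathbb{D}_{5/4} \subseteq \mathbb{D}_2$, so any zero lying in $D$ automatically satisfies both (a) and (b). I would compare $\phi$ on $\partial D$ with its linearisation $g(z) := \phi(z_0) + \phi'(z_0)(z - z_0)$, whose unique root $z_0 - \phi(z_0)/\phi'(z_0)$ lies in $D$ (it is at distance $r_0 < 2r_0$ from $z_0$). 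On $\partial D$ one has $|g(z)| \ge 2r_0|\phi'(z_0)| - |\phi(z_0)| = r_0|\phi'(z_0)|$, while Taylor expansion about $z_0$ (valid on $\overline{D}$) gives
$$
|\phi(z) - g(z)| \ \le \ \sum_{k \ge 2} \frac{|\phi^{(k)}(z_0)|}{k!}(2r_0)^k \ = \ 2r_0\,|\phi'(z_0)|\,S, \qquad S := \sum_{k \ge 2} \frac{|\phi^{(k)}(z_0)|}{k!\,|\phi'(z_0)|}(2r_0)^{k-1}.
$$
Thus everything reduces to proving $S < 1/2$: then $|\phi - g| < |g|$ on $\partial D$, Rouch\'e forces $\phi$ to have exactly one zero $z$ in $D$, and this $z$ is the one claimed.

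To estimate $S$ I would split the sum at $k = L$. For the low range $2 \le k \le L$, the basic tool is the telescoping identity
$$
\frac{|\phi^{(m)}(z_0)|}{|\phi'(z_0)|} \ = \ \prod_{j=1}^{m-1} \frac{|\phi^{(j+1)}(z_0)|}{|\phi^{(j)}(z_0)|} \ = \ \frac{\delta_1 \delta_2 \cdots \delta_{m-1}}{r_0^{\,m-1}} \qquad (1 \le m \le L),
$$
which is valid because $\phi^{(j)}(z_0) \ne 0$ for $1 \le j \le L$ and $\delta_j / r_0 = |\phi^{(j+1)}(z_0)\phi^{(j)}(z_0)^{-1}|$. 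Substituting, the $k$-th term of $S$ with $2 \le k \le L$ equals exactly $2^{k-1}\delta_1 \delta_2 \cdots \delta_{k-1}/k!$, so since $\delta_1 \ll_L 1$ and $\delta_j \lesssim_L 1$ for $2 \le j \le L-1$, the low range contributes $\lesssim_L \delta_1$. For the tail $k > L$ there is no pointwise control on $\phi^{(k)}(z_0)$ available from the hypotheses, so I would use the Cauchy estimates $|\phi^{(k)}(z_0)| \le k!\,M\,(4/3)^k$ (integrating over the circle $|z - z_0| = 3/4$, which lies in $\mathbb{D}_{7/4}$ because $z_0 \in \mathbb{D}$) together with the exact identity, consequent on $\delta = r_0/|\phi^{(L)}(z_0)|$ and the telescoping identity at $m = L$,
$$
\frac{M}{|\phi'(z_0)|} \ = \ \delta M \cdot \frac{\delta_1 \cdots \delta_{L-1}}{r_0^{\,L}}.
$$
Feeding this in, the $k$-th tail term ($k > L$) is at most $\delta M \cdot \delta_1 \cdots \delta_{L-1} \cdot (4/3)^k\, 2^{k-1}\, r_0^{\,k-1-L}$; since $k-1-L \ge 0$ and $r_0 \le 1/8$ the exponential factors collapse to $\lesssim_L (1/3)^k$, which is summable, and $\delta_1\cdots\delta_{L-1} \lesssim_L 1$, so the tail contributes $\lesssim_L \delta M$. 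Hence $S \lesssim_L \delta_1 + \delta M \ll_L 1$, which is $< 1/2$ provided the implicit smallness thresholds in the hypotheses $\delta_1 \ll_L 1$ and $\delta M \ll_L 1$ are taken small enough in terms of $L$.

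I expect the only real obstacle to be the bookkeeping in the tail estimate: one must track the powers of $r_0$ so that the smallness $r_0 \le 1/8$ overcomes the $(4/3)^k$ growth of the Cauchy bounds, and one must verify that $\delta_1\cdots\delta_{L-1}$ collapses to $O_L(\delta_1)$ --- or to $O_L(1)$ when $L = 1$ --- using only the stated bounds on $\delta_2,\dots,\delta_{L-1}$. The degenerate cases $L = 1, 2$, where the index ranges $1 \le k \le L-1$ and $2 \le k \le L-1$ in the hypotheses are empty, should be checked directly, the required smallness of $S$ then coming entirely from $\delta M \ll 1$. Alternatively, one could mimic the non-archimedean Proposition 2.1 of \cite{W-JGA} by running the Newton iteration $z_{n+1} = z_n - \phi(z_n)/\phi'(z_n)$ and showing inductively that $z_n \in \mathbb{D}_{5/4}$, $|\phi'(z_n)| \sim |\phi'(z_0)|$, and $|z_{n+1} - z_n| \le 2^{-n} r_0$, whence $z := \lim z_n$ satisfies (a) and (b); the per-step estimates are precisely those above with $2r_0$ replaced by $r_0$. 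The Rouch\'e route is cleaner in the archimedean setting.
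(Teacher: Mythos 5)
Your proposal is correct, and it takes a genuinely different route from the paper. The paper proves the lemma by running the Newton iteration $z_n = z_{n-1} - \phi(z_{n-1})\phi'(z_{n-1})^{-1}$ and establishing, by induction, four quantitative statements controlling $|z_n - z_{n-1}|$, $|\phi(z_{n-1})|$, a lower bound on $|\phi'(z_{n-1})|$ and upper bounds on $|\phi^{(k)}(z_{n-1})|$ for $2\le k\le L$, with quadratic-convergence factors $\Lambda^{2^{n-1}-1}$; the zero is obtained as the limit of the Cauchy sequence $\{z_n\}$. You instead apply Rouch\'e's theorem once on the disc of radius $2r_0$ about $z_0$, comparing $\phi$ with its linearisation, and the entire content of the hypotheses is funnelled into the single inequality $S<1/2$. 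The key inputs are the same in both arguments --- the telescoping identity $|\phi^{(m)}(z_0)/\phi'(z_0)| = \delta_1\cdots\delta_{m-1} r_0^{-(m-1)}$ for $m\le L$, and Cauchy estimates to control $\phi^{(k)}$ for $k>L$ via $M$ --- but you only need them at the single point $z_0$, whereas the paper must propagate them along the whole orbit. Your computations check out: the low-range terms collapse exactly to $2^{k-1}\delta_1\cdots\delta_{k-1}/k!$, the identity $M/|\phi'(z_0)| = \delta M\,\delta_1\cdots\delta_{L-1}\,r_0^{-L}$ is correct, and in the tail the factor $(4/3)^k 2^{k-1} r_0^{k-1-L}$ is indeed $\lesssim_L (1/3)^k$ once $r_0\le 1/8$ and $k\ge L+1$, so $S\lesssim_L \delta_1 + \delta M$. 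The Rouch\'e route is shorter, handles the degenerate cases $L=1,2$ with no extra work, and gives uniqueness of the zero in ${\mathbb D}_{2r_0}(z_0)$ for free. What the paper's iteration buys is fidelity to the nonarchimedean Hensel argument it is explicitly adapting (Proposition 2.1 of \cite{W-JGA}) and a transparent demonstration that the scheme stays inside the sublevel set, a point the author leans on conceptually elsewhere in the paper; but for the statement as given, your argument is complete.
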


{\bf Remark}: When $L=1$, there are no $\delta_k$ for $1\le k \le L-1$ and the conditions reduce to the single
condition $M \delta = M |\phi(z_0) \phi'(z_0)^{-2}| \le 1/64$.

We postpone the proofs of Proposition \ref{sublevel-structure-basic} and Lemma \ref{hensel-L} 
until Section \ref{Prop-8} and Section \ref{hensel}, respectively.

\section{Proof of Proposition \ref{poly-sublevel-H}}\label{Prop 4.1}

Here we show how Proposition \ref{poly-sublevel-H} follows from Corollary \ref{sublevel-structure-balls}. 

Let $P \in {\mathbb C}[X_1, \ldots, X_n]$ be a polynomial  
of degree at most $d$. Our aim is to establish the bound \eqref{sublevel-poly-H-local} from Proposition \ref{poly-sublevel-H}. 
A simple scaling
argument (just make the change of variables ${\underline{z}} = R {\underline{w}}$) shows
that we may assume $R=1$. We reproduce the statement of \eqref{sublevel-poly-H-local} when $R=1$
for the convenience of the reader: for any $a \in {\mathbb C}$,
$$
\bigl| \{ {\underline{z}} \in {\mathbb B}^n : |H_P({\underline{z}})| \ge H, \
|P({\underline{z}}) - a | \le 1 \} \bigr| \ \le \ C_{d,n} \, H^{-2}.
$$
The bound \eqref{sublevel-poly-H-local} will follow
from the bound \eqref{sublevel-estimate} in Corollary \ref{sublevel-structure-balls}.

Note that
$$
S(H) \ := \ \bigl\{ {\underline{z}} \in {\mathbb B}^n : H_P({\underline{z}}) \ge H, \,
|P({\underline{z}}) - a| \le 1 \bigr\} \ \subseteq \ \bigcup_{\alpha} S_{\alpha}(H)
$$
where 
$$
S_{\alpha}(H) \ = \ \bigl\{ {\underline{z}} \in S(H) : H_P({\underline{z}}) \ = \ 
|\partial^{\alpha} P({\underline{z}})/\alpha!|^{1/|\alpha|} \bigr\}.
$$
 
For $\alpha$ with $|\alpha| = k$, Lemma \ref{unit} implies
there exists $(a_1, \ldots, a_{d(n,k)}) \in {\mathbb C}^{d(n,k)}$ such that 
$$
{\underline{z}}^{\alpha} \ = \ a_1 ({\underline{u}}_1 \cdot {\underline{z}})^k \ + \ \cdots \ + \
a_{d(n,k)} ({\underline{u}}_{d(n,k)} \cdot {\underline{z}} )^k
$$
or
$$
\partial^{\alpha} \ = \  a_1 ({\underline{u}}_1 \cdot \nabla )^k \ + \ \cdots \ + \
a_{d(n,k)} ({\underline{u}}_{d(n,k)} \cdot \nabla )^k .
$$
Hence there exists $c_{d,n} > 0$ such that whenever 
${\underline{z}} \in {\mathbb C}^n$ satisfies $|\partial^{\alpha} P ({\underline{z}})\alpha! | \ge H^k$,
there exists a $j = j({\underline{z}})$ such that $|({\underline{u}}_j \cdot \nabla )^k P ({\underline{z}})| \ge c_{d,n} H^k$.
Therefore for each $\alpha$ with $|\alpha| = k$,
$$
S_{\alpha}(H) \ \subseteq \ \bigcup_{j=1}^{d(n,k)} \bigl\{ {\underline{z}} \in S(H) : 
|({\underline{u}}_j \cdot \nabla)^k  P ({\underline{z}})| \ge c_{d,n} H^k \bigr\}
\ =: \ \bigcup_{j=1}^{d(n,k)} S_{\alpha, j}(H).
$$

It suffices to bound each $S_{\alpha,j}(H)$. Let $R_j : {\mathbb C}^n \to {\mathbb C}^n$ 
be an orthogonal transformation such
that $R_j {\underline{u}}_j = (1,0,\ldots, 0)$. We make the change of variables ${\underline{w}} = R_j {\underline{z}}$
so that
$$
|S_{\alpha, j}(H)| \ \le \ | \bigl\{ {\underline{w}} \in {\mathbb B}^n :  
| \partial^k_1 Q_j ({\underline{w}})| \ge c_{d,n} H^k, \
|Q_j({\underline{w}})| \le 1 \bigr\} |
$$
where $Q_j = P \circ R_j^{-1} - a$ is still a polynomial of degree at most $d$. Now fix $
{\underline{w}}' \in {\mathbb B}^{n-1}$ and define $Q(w) = Q_j(w, {\underline{w}}')$, a complex polynomial
in one variable of degree at most $d$. Consider the slice
$$
S_{\alpha, j}^{{\underline{w}}'}(H) \ := \ 
\Bigl\{ w \in {\mathbb D}_{\sqrt{1 - \|{\underline{w}}'\|^2}} : \,
|Q^{(k)}(w)| \ge c_{d,n} H^k, \ |Q(w)| \le 1 \Bigr\}
$$ 
so that
$$
|S_{j,\alpha}(H)| \ \le \ \int_{{\mathbb B}^{n-1}} |S_{\alpha, j}^{{\underline{w}}'}(H)| \, d{\underline{w}}' \ \le \ 
\int_{{\mathbb B}^{n-1}}  \bigl| \bigl\{ w \in {\mathbb D} : |f^{(k)}(w)| \ge 1, \ |f(w)| \le \delta \bigr\}\bigr| \, d{\underline{w}}'
$$
where $f(w) := Q(w)/(c_{d,n}H^k) \in {\mathcal P}_d$ and $\delta = (c_{d,n} H^k)^{-1}$. For each fixed 
${\underline{w}}' \in {\mathbb B}^{n-1}$, we apply \eqref{sublevel-estimate} to conclude
$$
|S_{\alpha,j}(H)| \ \le  \ C_d \ \int_{{\mathbb B}^{n-1}} \delta^{2/k} \, d{\underline{w}}' \ \le \ C_{d,n} \ 
H^{-2},
$$
finishing the proof of Proposition \ref{poly-sublevel-H}.

\section{The proof of Theorem \ref{H-main-osc}}\label{Thm 5.1}

For $\phi \in C^{\infty}_c({\mathbb C}^n)$ 
and $P \in {\mathbb C}[X_1, \ldots, X_n]$, we set
$$
I_{\phi}(P) \ = \ \int_{{\mathbb C}^n} e(P(z)) \,  \phi(z) \, dz
$$
and redefine (slightly)
$$
H_{P,\phi} \ := \ \inf_{z \in {\rm supp}(\phi)} \, 
\max_{|\alpha|\ge 1} |\partial^{\alpha} P(z)|^{1/|\alpha|},
$$
dropping the factorials for notational convenience. Since the cut-off $\phi$ is fixed, we will
also drop the subscipt $\phi$ and write $H_P$ instead of $H_{P,\phi}$, again for notational convenience.

Our aim is to establish the bound
\begin{equation}\label{poly-several-int}  
|I_{\phi}(P)| \ \le \ C_{d,n,\phi} \, H_P^{-2}.
\end{equation}


{\bf Remark}: In the application establishing Proposition \ref{ACK-intro}, it will be important to track
the dependence of the constant $C_{d,n,\phi}$ in \eqref{poly-several-int} on $\phi$
more precisely. The proof will show (see \eqref{phi-dependence-again}) that
\begin{equation}\label{phi-dependence}
C_{d,n,\phi} \ \lesssim_N \ [1 + J_P^{-N} \|\phi\|_{C^N}]
\end{equation}
for any $N\ge 4$. Here
$$
J_{P} \ = \ \inf_{z \in {\rm supp}(\phi)} J(z) \ := \ \inf_{z\in {\rm supp}(\phi)} 
\max_{2\le |\alpha| \le d} |\partial^{\alpha} P(z)|^{1/|\alpha|}
$$
so that
$H_{P} = \inf_{z\in {\rm supp}(\phi)} \max(|\nabla P(z)|, J(z))$. 

Set $r(z) := 1/J(z)$. 

\begin{lemma}\label{J-constant} \ 

1. There exists $\epsilon_{d,n}>0$ such that if $\epsilon \le \epsilon_{d,n}$ and $z = w + \epsilon r(w) u$
with $u\in {\mathbb B}^n$, then
\begin{equation}\label{J-2}
(1/2) \, J(w) \ \le \ J(z) \ \le \ 2 \, J(w).
\end{equation}

2. For all $A > 0$, there exists a constant $C = C_{A,d,n}$ such that if $z = w + A r(w) u$ with $u\in {\mathbb B}^n$,
then
\begin{equation}\label{J-A}
J(z) \ \le \ C \, J(w).
\end{equation}
\end{lemma}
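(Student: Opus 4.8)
The plan is to prove Lemma \ref{J-constant} by exploiting the polynomial structure of $P$ through Taylor expansion of each partial derivative $\partial^\alpha P$, combined with the equivalence of norms on the finite-dimensional space of polynomials of degree at most $d$. The essential idea is that $r(w) = 1/J(w)$ is precisely the natural length scale at which the "higher" derivatives of $P$ are comparable, so on a ball of radius $O(r(w))$ around $w$ all the derivatives $\partial^\beta P$ with $2 \le |\beta| \le d$ are controlled by $J(w)^{|\beta|}$, and this control transfers to $J(z)$.

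For part 1, first I would fix $w$ and set $J = J(w)$, $r = r(w) = 1/J$. For any multi-index $\beta$ with $2 \le |\beta| \le d$, I would Taylor-expand $\partial^\beta P(z)$ about $w$ in powers of $z - w = \epsilon r u$:
\begin{equation*}
\partial^\beta P(z) \ = \ \sum_{\gamma : |\beta| + |\gamma| \le d} \frac{(\epsilon r u)^\gamma}{\gamma!} \, \partial^{\beta + \gamma} P(w),
\end{equation*}
where the sum is finite since $P$ has degree $d$. Each term $\partial^{\beta+\gamma}P(w)$ has $|\beta+\gamma| \ge 2$, so by the definition of $J(w)$ we have $|\partial^{\beta+\gamma}P(w)| \le J^{|\beta+\gamma|}$, and the factor $(\epsilon r u)^\gamma$ has modulus at most $(\epsilon r)^{|\gamma|} = (\epsilon/J)^{|\gamma|}$. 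Hence $|\partial^\beta P(z)| \le J^{|\beta|} \sum_\gamma \epsilon^{|\gamma|}/\gamma! \le J^{|\beta|}(1 + C_{d,n}\epsilon)$ for a combinatorial constant $C_{d,n}$. Taking the max over $\beta$ and then $|\beta|$-th roots, and choosing $\epsilon_{d,n}$ small enough that $(1+C_{d,n}\epsilon)^{1/|\beta|} \le 2$ for all relevant $|\beta|$, gives $J(z) \le 2 J(w)$. For the lower bound I would run the same expansion backwards: writing $\partial^\beta P(w)$ as a Taylor series in $z - w$ does not immediately work since $J(z)$ is not yet known to be comparable, so instead I would pick the index $\beta_*$ with $2 \le |\beta_*| \le d$ realizing $J(w)^{|\beta_*|} = |\partial^{\beta_*}P(w)|$ and expand $\partial^{\beta_*}P(w) = \sum_\gamma \frac{(-\epsilon r u)^\gamma}{\gamma!}\partial^{\beta_*+\gamma}P(z)$; bounding the $|\gamma| \ge 1$ terms by $(\epsilon r)^{|\gamma|} J(z)^{|\beta_*+\gamma|}$ — valid once we know from the upper bound that $r J(z) \le 2$ — isolates the $\gamma = 0$ term: $|\partial^{\beta_*}P(z)| \ge |\partial^{\beta_*}P(w)| - C_{d,n}\epsilon \max(J(w),J(z))^{|\beta_*|}$, and since $J(z) \le 2J(w)$ this yields $J(z)^{|\beta_*|} \ge |\partial^{\beta_*}P(z)| \ge J(w)^{|\beta_*|}(1 - C'_{d,n}\epsilon)$, giving $J(z) \ge (1/2)J(w)$ after shrinking $\epsilon_{d,n}$ further. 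Part 2 is then the same upper-bound computation but with $\epsilon$ replaced by the fixed constant $A$: the geometric-type sum $\sum_\gamma A^{|\gamma|}/\gamma!$ is still finite (bounded by $e^{An}$ or a crude $d$-dependent bound), so $|\partial^\beta P(z)| \le J(w)^{|\beta|} C_{A,d,n}$ and hence $J(z) \le C_{A,d,n}^{1/|\beta|} J(w) \le C_{A,d,n} J(w)$.

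The main obstacle — really the only subtlety — is the lower bound in part 1: one cannot naively Taylor-expand $J(z)$ in terms of $J(w)$ without first having an a priori comparison, so the argument must be staged (upper bound first, then feed it into the lower bound), and one must be careful that the index $\beta_*$ achieving the maximum for $J(w)$ need not be the same index achieving it for $J(z)$ — which is fine, since we only need \emph{some} derivative at $z$ to be large, and $\partial^{\beta_*}P(z)$ serves that role. Everything else reduces to the elementary finiteness of Taylor expansions of polynomials and choosing $\epsilon_{d,n}$ small in terms of the purely combinatorial constants $C_{d,n}, C'_{d,n}$ that appear.
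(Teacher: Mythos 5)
Your proof is correct and follows essentially the same route as the paper: Taylor-expand each $\partial^{\alpha}P$ about $w$ at scale $r(w)$, bound each term by $a^{|\gamma|}\,J(w)^{|\alpha|+|\gamma|}\,r(w)^{|\gamma|} = a^{|\gamma|}\,J(w)^{|\alpha|}$, and absorb the resulting factor $e^{an}-1$, which is small when $a=\epsilon$ and merely bounded when $a=A$. The only place you diverge is the lower bound in part 1: the paper avoids your reverse expansion and two-stage bootstrap by applying the \emph{forward} expansion at the maximizing index $\alpha_0$, where every error term involves only derivatives at $w$ and is therefore at most $(e^{\epsilon n}-1)\,|\partial^{\alpha_0}P(w)|$, giving $|\partial^{\alpha_0}P(z)| \ge \tfrac{1}{2}|\partial^{\alpha_0}P(w)|$ and hence $J(z)\ge \tfrac{1}{2}J(w)$ directly; your staged argument (upper bound first, then feed $rJ(z)\le 2$ into the reversed expansion) is also valid, but the staging is unnecessary.
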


\begin{proof} Let $a = \epsilon$ or $A$ and suppose $J(w) = |\partial^{\alpha_0}P(w)|^{1/|\alpha_0|}$ for some
$2\le |\alpha_0| =: k_0 \le d$. For any $\alpha$ with $|\alpha| = k$ and $2\le k \le d$,  set $Q = \partial^{\alpha}P$.
Hence 
$$
Q(z) \ = \ Q(w) \ + \ \sum_{1\le |\beta| \le d-k} \frac{1}{\beta !} \ \partial^{\beta} Q(w) \ (a r(w))^{|\beta|} \, u^{\beta}.
$$
For $1 \le |\beta| \le d - k$,
$$
| \partial^{\beta} Q(w)|  (a r(w))^{|\beta|} \ = \ |\partial^{(\alpha + \beta)} P(w)| a^{|\beta|} 
\frac{1}{|\partial^{\alpha_0} P(w)|^{|\beta|/k_0}} \ \le \ a^{|\beta|} \, |\partial^{\alpha_0} P(w)|^{k/k_0}
$$
and therefore
$$
\Bigl| \sum_{1\le |\beta| \le d-k} \frac{1}{\beta !} \ \partial^{\beta} Q(w) \ (a r(w))^{|\beta|} \Bigr|
\ \le \ |\partial^{\alpha_0} P(w)|^{k/k_0} \sum_{\ell=1}^{d-k} a^{\ell} \sum_{|\beta| = \ell} \frac{1}{\beta!}
$$
$$
= \  |\partial^{\alpha_0} P(w)|^{k/k_0} \sum_{\ell=1}^{d-k} \frac{(an)^{\ell}}{\ell!} \ \le  \
 (e^{a n} - 1) \, |\partial^{\alpha_0} P(w)|^{k/k_0} .
$$
Now suppoer $a = A$. Then
$$
|\partial^{\alpha} P(z) | \ \le \ |\partial^{\alpha_0} P(w)|^{k/k_0} \ +  \  (e^{an} - 1) \, 
|\partial^{\alpha_0}P(w)|^{k/k_0} \ \le \ C \, |\partial^{\alpha_0}P(w)|^{k/k_0}
$$
and so for all $\alpha$ with $2 \le |\alpha| \le d$,
$$
|\partial^{\alpha} P(z)|^{1/|\alpha|} \ \le \ C J(w), \ \ {\rm implying} \ \ J(z) \ \le \ C \, J(w).
$$
For $a = \epsilon$, we have $e^{\epsilon n} - 1 \le 1/2$ for $\epsilon_{d,n}$ small enough and so
for all $\alpha$ with $2\le |\alpha| \le d$,
$$
|\partial^{\alpha} P(z)|^{1/|\alpha|} \ \le \ 2 J(w), \ \ {\rm implying} \ \ J(z) \ \le \ 2 \, J(w).
$$
Furthermore when $\alpha = \alpha_0$,
$$
|\partial^{\alpha_0} P (z) | \ \ge \ |\partial^{\alpha_0} P(w)| \ - \ (e^{\epsilon n} - 1) |\partial^{\alpha_0} P(w)|
\ \ge \ (1/2)|\partial^{\alpha_0}P(w)|
$$
and so 
$$
(1/2) J(w) \ \le \ |\partial^{\alpha_0} P(z)|^{1/|\alpha_0|} \ \le \ J(z).
$$
\end{proof}

For any $\epsilon < \epsilon_{d,n}$, we have by the standard Vitali covering lemma, 
$$
{\rm supp}(\phi) \ \subseteq \ \bigcup_{j=1}^M {\mathbb B}^n_{3 \epsilon r(z_j)}(z_j) \ \ {\rm where \ each} \ \ z_j \in {\rm supp}(\phi)
$$
and $\{{\mathbb B}^n_{\epsilon r(z_j)}(z_j)\}_{j=1}^M$ are pairwise disjoint.

\begin{lemma}\label{bounded-overlap} For any $C>0$, there exists $N = N_{C,d,n,\epsilon} \in {\mathbb N}$
such that every
$$
z_{*} \ \in \ \bigcup_{j=1}^M {\mathbb B}^n_{C r(z_j)}(z_j)
$$ 
lies in at most $N$ of these balls. 
\end{lemma}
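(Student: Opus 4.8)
The plan is to run a standard volume-packing (bounded overlap) argument in which the only non-formal ingredient is the observation, supplied by part~2 of Lemma \ref{J-constant}, that the radii $r(z_j)$ of all the balls $\mathbb{B}^n_{C r(z_j)}(z_j)$ containing a fixed point must be comparable to one another. Granting that, a point can lie in only boundedly many of these dilated balls, because the corresponding undilated balls $\mathbb{B}^n_{\epsilon r(z_j)}(z_j)$ are pairwise disjoint, of comparable volume, and all contained in a single ball of comparable radius, so a crude volume count bounds their number.

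In detail, I would fix $z_* \in \bigcup_j \mathbb{B}^n_{C r(z_j)}(z_j)$, let $\mathcal{J} = \{ j : |z_* - z_j| \le C r(z_j)\}$, and choose $j_0 \in \mathcal{J}$ with $r(z_{j_0})$ minimal (equivalently $J(z_{j_0})$ maximal) over $j \in \mathcal{J}$. For any $j \in \mathcal{J}$ the triangle inequality gives $|z_j - z_{j_0}| \le C r(z_j) + C r(z_{j_0}) \le 2C r(z_j)$, so one may write $z_{j_0} = z_j + (2C) r(z_j) u$ with $u \in \mathbb{B}^n$; applying Lemma \ref{J-constant}(2) with $A = 2C$ then yields $J(z_{j_0}) \le C_0\, J(z_j)$, that is $r(z_{j_0}) \le r(z_j) \le C_0\, r(z_{j_0})$, where $C_0 = C_{2C,d,n}$ depends only on $C, d, n$ (the left inequality is the minimality of $r(z_{j_0})$). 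Consequently each ball $\mathbb{B}^n_{\epsilon r(z_j)}(z_j)$ with $j \in \mathcal{J}$ is contained in $\mathbb{B}^n_{\rho}(z_{j_0})$ for $\rho = (2C + \epsilon) C_0\, r(z_{j_0})$, while the balls $\{\mathbb{B}^n_{\epsilon r(z_j)}(z_j)\}_{j \in \mathcal{J}}$ are pairwise disjoint with $2n$-dimensional volume at least $c_n (\epsilon\, r(z_{j_0}))^{2n}$. Comparing with the volume $c_n \rho^{2n}$ of $\mathbb{B}^n_\rho(z_{j_0})$ gives $|\mathcal{J}| \le \big( (2C + \epsilon) C_0 / \epsilon \big)^{2n} =: N$, and this $N$ depends only on $C, d, n$ and $\epsilon$, which is exactly the assertion.

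I do not expect a genuine obstacle here: the estimate is entirely soft once Lemma \ref{J-constant} is in hand. The one step that must not be skipped is the invocation of part~2 of that lemma to make all radii $r(z_j)$, $j \in \mathcal{J}$, comparable; without it the disjoint balls $\mathbb{B}^n_{\epsilon r(z_j)}(z_j)$ could have radii differing by arbitrarily large factors, and neither a uniform lower bound on their individual volumes nor a uniform upper bound on the volume of their union would be available. (Part~1, with its restriction to small displacements, would only settle the regime $C \lesssim_{d,n} 1$; general $C$ forces the use of part~2.) Note also that the factor $r^{2n}$ rather than $r^{n}$ in the volume count reflects that we are packing balls in $\mathbb{C}^n \simeq \mathbb{R}^{2n}$.
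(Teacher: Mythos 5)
Your proof is correct, but it takes a genuinely different (and shorter) route than the paper's. Both arguments rest on the same two pillars, namely the disjointness of the balls ${\mathbb B}^n_{\epsilon r(z_j)}(z_j)$ and a volume count, and both must confront the same difficulty: part 2 of Lemma \ref{J-constant} is one-directional (it bounds $J$ at the displaced point by $J$ at the base point, i.e.\ $r(\text{base}) \lesssim r(\text{displaced})$), whereas the packing argument needs two-sided comparability of the relevant radii. The paper resolves this by comparing every $r(z_j)$, $j \in {\mathcal A}$, to $r(z_*)$ at the ambient point: one direction comes from part 2, and the reverse direction $(\dag)$, $r(z_*) \lesssim r(z_j)$, is obtained by a chaining argument that walks from $z_j$ to $z_*$ in steps of size $\epsilon r(w_\ell)$, bounds the number of steps, and iterates part 1 of Lemma \ref{J-constant} along the chain. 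You sidestep the chain entirely: by selecting $j_0 \in {\mathcal J}$ with $r(z_{j_0})$ minimal, the displacement $|z_j - z_{j_0}| \le 2C\,r(z_j)$ is naturally measured in units of the base radius $r(z_j)$, so a single application of part 2 yields $r(z_j) \le C_0\, r(z_{j_0})$, which together with minimality gives two-sided comparability of all the radii indexed by ${\mathcal J}$ without ever invoking $r(z_*)$ or part 1. This is cleaner and uses strictly less; the only thing the paper's chaining buys that yours does not is the auxiliary comparison $(\dag)$ between $r(z_*)$ and $r(z_j)$ at an arbitrary point of the dilated ball, which is not needed for the statement of the lemma. Your side remarks (finiteness of ${\mathcal J}$ so the minimum is attained, the $2n$ in the volume exponent, and why part 1 alone would only handle $C \lesssim_{d,n} 1$) are all accurate.
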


\begin{proof} First we note that for $z \in {\mathbb B}^n_{C r(z_j)}(z_j)$, we have
$$
J(z) \ \le \ C' J(z_j) \ \ {\rm or} \ \ r(z_j) \ \le \ C' r(z) \eqno{(*)}
$$
by Lemma \ref{J-constant}, part 2. The constant $C'$ depends on $C, d$ and $n$. We may assume
that $C > \epsilon$. Now fix $z_{*} \in B_{C r(z_j)}(z_j)$ and choose 
$w_{\ell} = z_j + t_{\ell}(z_{*} - z_j) \in {\mathbb B}^n_{C r(z_j)}(z_j)$ with $1\le \ell \le L$ 
and $t_{\ell} \in [0,1]$ such that $|z_j - w_1| = \epsilon r(z_j)$,
$$
 |w_1 - w_2| = \epsilon r(w_1), \ldots, |w_{L-1} - w_L| = \epsilon r(w_{L-1})
\ \ \ {\rm and} \ \ \  |w_L - z_{*}| \le \epsilon r(w_L).
$$
Set $w_0 = z_j$ and $w_{L+1} = z_{*}$. Note that $(*)$ implies $r(z_j) \le C' r(w_{\ell})$ for every
$0\le \ell \le L+1$ and so
$$
\epsilon r(z_j) L \ \le \ C' \epsilon \sum_{\ell=0}^{L-1} r(w_{\ell}) \ = \ C' \sum_{\ell=0}^{L-1}
|w_{\ell} - w_{\ell+1}| \ \le \ C' |z_j - z_{*}| \ \le \ C C' r(z_j),
$$
implying $L \lesssim_{C,C',\epsilon} 1$. 

Next by Lemma \ref{J-constant} part 1., we have
$$
(1/2) J(w_{\ell-1}) \ \le \ J(w_{\ell}) \ \ \ {\rm or} \ \ \ r(w_{\ell}) \le 2 r(w_{\ell-1})
$$
for $1\le \ell \le L$. Hence
$$
r(z_{*}) \ = \ r(w_{L+1}) \ \le \ 2 r(w_{L}) \ \le \ 2^2 r(w_{L-1}) \ \le \ \cdots \ \le \ 2^{L+1} r(w_0) \ = \
2^{L+1} r(z_j)
$$
and so
$$
r(z_{*}) \ \lesssim_{C,C',\epsilon} \ r(z_j). \eqno{(\dag)}
$$
Set
$$
{\mathcal A} \ := \ \bigl\{ 1\le j \le M : \ z_{*} \in {\mathbb B}^n_{C r(z_j)}(z_j) \bigr\}
$$
so that by $(*)$,
$$
\bigcup_{j\in {\mathcal A}} {\mathbb B}^n_{\epsilon r(z_j)}(z_j) \ \subseteq \ 
{\mathbb B}^n_{2 C' r(z_{*})}(z_{*}).
$$
Hence by the disjointness of $\{{\mathbb B}^n_{\epsilon r(z_j)}(z_j) \}_{j=1}^M$ and $(\dag)$,
$$
\# {\mathcal A}\  r(z_{*})^{2n} \ \lesssim_{C,C',\epsilon} \ \sum_{j\in {\mathcal A}} |{\mathbb B}^n_{\epsilon r(z_j)}(z_j) | \ 
\le \ [2 C' r(z_{*})]^{2n},
$$
implying $\# {\mathcal A} \lesssim_{C,d,n,\epsilon} 1$.
\end{proof}

Now fix $\psi \in C^{\infty}_c({\mathbb B}^n_2)$ such that $\psi \equiv 1$ on ${\mathbb B}^n$ and set
$$
\Psi(z) \ := \ \sum_{j=1}^M \psi((3\epsilon)^{-1} r(z_j)^{-1} (z - z_j)),
$$
the sum having bounded overlap by Lemma \ref{bounded-overlap} and so $\Psi$ is bounded.
Note that for every $z \in {\rm supp}(\phi)$, 
we have $z \in {\mathbb B}^n_{3\epsilon r(z_j)}(z_j)$ for some $1\le j \le M$,
implying $\psi((3\epsilon)^{-1} r(z_j)^{-1} (z - z_j)) = 1$ and hence $\Psi \ge 1$ on ${\rm supp}(\phi)$. Also
$$
{\rm supp}(\Psi) \ \subseteq \ \bigcup_{j=1}^M {\mathbb B}^n_{6 \epsilon r(z_j)}(z_j).
$$
We decompose the oscillatory integral 
$$
I \ = \ \sum_{j=1}^M \, \int_{{\mathbb C}^n} e(P(z)) \, \varphi_j(z) \, dz \ =: \ \sum_{j=1}^M I_j
$$
where 
$$
\varphi_j(z) \ := \ \psi_j((3\epsilon)^{-1} r(z_j)^{-1} (z - z_j)) \ \phi(z) \, \Psi^{-1}(z).
$$
Let 
$$
A \ := \ \{ 1 \le j \le M: \, |\nabla P(z_j)| \ \le \ J(z_j) \}.
$$
Fix $j\in A$ and let $z \in {\rm supp}(\varphi_j)$. Then $z = z_j + 6\epsilon r(z_j) u$ for some
$u \in {\mathbb B}^n$. Suppose $J(z_j) =  |\partial^{\alpha}P(z_j)|^{1/|\alpha|}$
for some $2 \le |\alpha| \le d$. Then since
$\epsilon < \epsilon_{d,n}$ and $\epsilon_{d,n}$ is small,
we see by the proof of Lemma \ref{J-constant}, part 1,
$$
(1/2) |\partial^{\alpha} P(z)|^{1/|\alpha|} \ \le \ |\partial^{\alpha} P(z_j)|^{1/|\alpha|} \ \le \ 
2 |\partial^{\alpha}P(z)|^{1/|\alpha|}
$$
which implies $J(z_j) \le 2 J(z)$. Furthermore the proof of Lemma \ref{J-constant} also shows that
$$
|\nabla P(z)| \ \le \ 2 |\partial^{\alpha}P(z_j)|^{1/|\alpha|} \ \le \ 4 |\partial^{\alpha}P(z)|^{1/|\alpha|}
$$
and therefore
$$
 |\nabla P(z)| \ \le \ 4  |\partial^{\alpha}P(z)|^{1/|\alpha|}
\ \le \ 4 J(z).
$$
Hence
$$
\bigl| \sum_{j\in A} I_j \bigr| \ \le \ \int_{{\mathfrak S}}
\bigl[ \sum_{j\in A} \varphi_j(z)\bigr] \, dz
$$
where ${\mathfrak S} := \{z \in {\rm supp}(\phi) :  |\nabla P(z)| \le 4 J(z)\}$. The sum
$ \sum_{j\in A} \varphi_j(z)$ is uniformly bounded and so 
\begin{equation}\label{S-frak}
\bigl| \sum_{j\in A} I_j \bigr| \ \lesssim \ |{\mathfrak S}|.
\end{equation}
For $z \in {\mathfrak S}$, we note that
$$
 J(z) \ \sim \ H(z) \ := \ \max_{1\le k \le d} \ \max_{|\alpha| = k} 
|\partial^{\alpha}P(z)|^{1/|\alpha|}.
$$
Since $|\nabla P(z)| \sim \max_{1\le j \le n} |\partial_j P(z)|$, we see that
$$
{\mathfrak S} \ \subseteq \ \bigcup_{j=1}^n \bigl\{ z \in {\mathfrak S} : |\nabla P(z)| \sim |\partial_j P(z)| \bigr\}
\ =: \ \bigcup_{j=1}^n {\mathfrak S}_j.
$$
Note that ${\mathfrak S}_j \ = \ \{z \in S :  |\partial_j P(z)| \le 4 J(z) \sim H(z)\}$. 
Since 
$$
J(z) \ = \ \max_{2\le |\alpha| \le d} |\partial^{\alpha}P(z)|^{1/|\alpha|},
$$
we can decompose each
${\mathfrak S}_j$ further; we have
$$
{\mathfrak S}_j \ \subseteq \ \bigcup_{2\le |\alpha| \le d} \bigl\{ z \in {\mathfrak S}_j : J(z) = |\partial^{\alpha}P(z)|^{1/|\alpha|}
\bigr\} \ =: \ \bigcup_{2\le |\alpha| \le d} {\mathfrak S}_{j, \alpha}.
$$
For each $1\le j \le n$ and $2\le |\alpha| \le d$, we decompose
$$
{\mathfrak S}_{j,\alpha} \ \subseteq \ \bigcup_{r,\ell \ge 0} 
\bigl\{ z \in {\mathfrak S}_j :2^{\ell} |\partial_j P(z)| \sim |\partial^{\alpha}P(z)|^{1/|\alpha|} \sim 2^r H_P \bigr\}
$$
Hence $|{\mathfrak S}_{j,\alpha}| \le \sum_{r,\ell\ge 0} |{\mathfrak S}_{j,\alpha}^{r,\ell}|$ where
$$
{\mathfrak S}_{j,\alpha}^{r,\ell} \ := \ 
\bigl\{ z \in {\mathfrak S}_j :2^{\ell} |\partial_j P(z)| \sim |\partial^{\alpha}P(z)|^{1/|\alpha|} \sim 2^r H_P \bigr\}.
$$
We apply Proposition \ref{poly-sublevel-several} to
$Q(z) = \partial_j P(z)$ to conclude
$$
|{\mathfrak S}_{j,\alpha}^{r,\ell}| \ \le \ C_{d,n} \Bigl[ \frac{2^{r-\ell} H_P}{(2^r H_P)^{|\alpha|}}\Bigr]^{2/(|\alpha| - 1)}
\ = \ C_{d,n} \, 2^{-2\ell(|\alpha| - 1)^{-1}} 2^{-2r} \, H_P^{-2} 
$$
which sums in $r\ge 0$ and $\ell \ge 0$ to produce the bound $|{\mathfrak S}_{j,\alpha}| \le C_{d,n} H_P^{-2}$
for every $1\le j \le n$ and $2 \le |\alpha| \le d$. Hence $|{\mathfrak S}| \lesssim_{d,n} H_P^{-2}$ and
so \eqref{S-frak} implies
\begin{equation}\label{A}
\bigl| \sum_{j\in A} I_j \bigr| \ \lesssim_{d,n} \ H_P^{-2}.
\end{equation}

We now turn to the sum $\sum_{j\in B} I_j$ where 
$$
B \ := \ \{ 1 \le j \le M: \, J(z_j) \ \le \ |\nabla P(z_j)| \, \}.
$$
For each $j \in B$, we make a change of variables in the integral $I_j$ to write it as
$$
I_j \ = \ r(z_j)^{2n} \int_{{\mathbb C}^n} e(P(z_j + r(z_j) w) \psi((3\epsilon)^{-1} w) {\tilde{\phi}}(w) \, dw
$$
where ${\tilde{\phi}}(w) = [\phi/\Psi](z_j + r(z_j) w)$. We expand
$$
P(z_j + r(z_j) w) = P(z_j) + r(z_j) \nabla P(z_j) \cdot w + \sum_{2\le |\alpha| \le d} \frac{1}{\alpha!} \partial^{\alpha}P(z_j)
\, r(z_j)^{|\alpha|} w^{\alpha}
$$
so that
$$
I_j \ = \ r(z_j)^{2n} e(P(z_j)) \int_{{\mathbb C}^n} e(\Lambda_j Q(w)) \psi((3\epsilon)^{-1} w) {\tilde{\phi}}(w) \, dw
$$
where $\Lambda_j = r(z_j) |\nabla P(z_j)|$ and 
$$
Q(w) \ := \ A_j \cdot w +  \sum_{2\le |\alpha| \le d} B_{\alpha} w^{\alpha}.
$$
Here $A_j = \nabla P(z_j)/|\nabla P(z_j)|$ and 
$$
B_{\alpha} \ := \  \Lambda_j^{-1} \, \frac{1}{\alpha!} \partial^{\alpha}P(z_j) \, r(z_j)^{|\alpha|} 
$$
Since $j \in B$, we see that $\Lambda_j \ge 1$ and each $|B_{\alpha}| \lesssim 1$. Hence, since 
$|w| \le 6 \epsilon$ and $\epsilon>0$ is small, we have $|\nabla Q(w)| \gtrsim 1$ and $\|Q\|_{C^{d}} \lesssim 1$
on the support of $\psi((3\epsilon)^{-1} \cdot)$, we have
$$
|I_j| \ \lesssim_N \ r(z_j)^{2n} \, \Lambda_j^{-N}
$$
for all $N \ge 1$. 

{\bf Remark}: In fact integration by parts shows
\begin{equation}\label{phi-dependence-again}
|I_j| \lesssim_N r(z_j)^2 [1 + r(z_j)^N \|{\tilde{\phi}}\|_{C^N}] \Lambda_j^{-N} \lesssim_{N,\psi}
r(z_j)^2 [1+J_P^{-N}\|\phi\|_{C^N}] \Lambda_j^{-N}
\end{equation}
where $J_P = \inf_{z \in {\rm supp}(\phi)} J(z)$. This gives the dependence on the cut-off $\phi$
made in the Remark \eqref{phi-dependence} at the beginning of this section.

From Lemma \ref{J-constant}, we know that $r(z)$ is roughly constant on the support of
$\psi((3\epsilon)^{-1} \cdot)$. Furthermore, using $J(z_j) \le |\nabla P(z_j)|$ for $j \in B$, we see
that $|\nabla P(z)|$ is roughly constant on $\psi((3\epsilon)^{-1} \cdot)$ (the proof is precisely the same
as in Lemma \ref{J-constant}). Therefore
$$
|I_j| \ \lesssim_{N,\epsilon} \ \int_{{\mathbb C}^n} \Bigl[\frac{J(z)}{|\nabla P(z)|}\Bigr]^N \, \psi((3\epsilon)^{-1}r(z_j)^{-1}
(z-z_j)) \, dz
$$ 
and so 
$$
\bigl| \sum_{j\in B} I_j \bigr| \ \lesssim_{N,\phi,\epsilon} \ \int_{{\mathcal B}} \Bigl[\frac{J(z)}{|\nabla P(z)|}\Bigr]^N \, dz
$$
where ${\mathcal B} = \{ z \in {\mathbb B}^n : J(z) \lesssim |\nabla P(z)| \}$. Exactly as in the sum over $A$,
we may decompose
$$
{\mathcal B} \subseteq \bigcup_{j,\alpha} \bigcup_{r,\ell\ge0} {\mathcal B}_{j,\alpha}^{r,\ell} \ \ {\rm where} \ \
{\mathcal B}_{j,\alpha}^{r,\ell} := \bigl\{ z \in {\mathcal B}: 2^{\ell} |\partial^{\alpha}P(z)|^{1/|\alpha|} \sim 
|\partial_j P(z)| \sim 2^r H_P \, \bigr\},
$$
implying
\begin{equation}\label{B-sum}
\bigl| \sum_{j\in B} I_j \bigr| \ \lesssim_{N,\phi,\epsilon} \ \sum_{j=1}^n \sum_{2\le |\alpha| \le d} \ \sum_{r,\ell\ge 0} 2^{-N\ell}
|{\mathcal B}_{j,\alpha}^{r,\ell}|.
\end{equation}
For fixed $1\le j \le n$ and $2\le |\alpha| \le d$, 
we apply Proposition \ref{poly-sublevel-several} to
$Q(z) = \partial_j P(z)$ to conclude
$$
|{\mathcal B}_{j,\alpha}^{r,\ell}| \ \lesssim_{d,n,\phi} \  \Bigl[ \frac{2^{r} H_P}{(2^{r-\ell} H_P)^{|\alpha|}}\Bigr]^{2/(|\alpha| - 1)}
\ = \ C_{d,n,\phi} \, 2^{2\ell |\alpha| (|\alpha| - 1)^{-1}} 2^{-2r} \, H_P^{-2} .
$$
Inserting this bound into \eqref{B-sum} with $N > 2 d/(d-1)$ implies that
$$
\bigl| \sum_{j\in B} I_j \bigr| \ \lesssim_{d,n,\phi} \ H_P^{-2}
$$
which, together with \eqref{A}, completes the proof of \eqref{poly-several-int} and hence Theorem \ref{H-main-osc}.

\section*{{\bf \large Proofs of Proposition \ref{sublevel-structure-basic} and Lemma \ref{hensel-L}}}

Here we give the proofs of Proposition \ref{sublevel-structure-basic} and Lemma \ref{hensel-L}.


\section{Proof of Proposition \ref{sublevel-structure-basic}}\label{Prop-8}

Let us recall the statement of Proposition \ref{sublevel-structure-basic}. Let $Q \in {\mathbb C}[X]$
have degree $d$.
Let $z_0 \in {\mathbb D}$ and $0 < \epsilon \le \epsilon_d$ for a sufficiently small $\epsilon_d>0$.
Suppose 
$|Q^{(k)}(z_0)|\ge 1$ and $|Q(z_0)| \le \epsilon$. Our aim is to find a zero $z_{*}$
of $Q^{(j)}$ for some $0\le j \le d$ such $|z_0 - z_{*}| \lesssim_d  \epsilon^{1/k}$.

We begin with the following lemma.

\begin{lemma}\label{triple-norm}
Let ${\mathcal P}_d \subseteq {\mathbb C}[X]$ denote the space of complex polynomials of degree at most $d$.
For $P(z) = c_d z^d + \cdots + c_1 z + c_0 \in {\mathcal P}_d$, define $\|P\| = \max_j |c_j|$.
Then there exists a constant $c_d >0$ depending only on $d$ such that 
for every $P \in {\mathcal P}_d$, 
\begin{equation}\label{poly-below}
|P^{(n)}(z)| \ \ge \ c_d \|P\| \ \ {\rm for \ all} \ z \in {\mathbb D}
\end{equation}
holds for some $0\le n \le d$.
\end{lemma}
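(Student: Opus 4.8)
The plan is to argue by a homogeneity reduction followed by a compactness argument on the finite-dimensional space $\mathcal{P}_d \cong \mathbb{C}^{d+1}$. Since the quantity
$$
\Phi(P) \ := \ \max_{0\le n \le d}\ \min_{z\in\mathbb{D}}\ |P^{(n)}(z)|
$$
is homogeneous of degree one in $P$ and $\|\cdot\|$ is a genuine norm, it suffices to produce a constant $c_d>0$ with $\Phi(P) \ge c_d$ for all $P$ in the unit sphere $S := \{P\in\mathcal{P}_d : \|P\| = 1\}$; the general case of \eqref{poly-below} then follows by applying this to $P/\|P\|$ (and is trivial for $P\equiv 0$). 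The set $S$ is compact, and $\Phi$ is continuous on $\mathcal{P}_d$: if $P_m\to P$ in coefficients then $P_m^{(n)}\to P^{(n)}$ uniformly on the compact disc $\mathbb{D}$ for each $n$, hence $\min_{z\in\mathbb{D}}|P_m^{(n)}(z)| \to \min_{z\in\mathbb{D}}|P^{(n)}(z)|$, and $\Phi$ is the maximum of finitely many such functions. Therefore $\Phi$ attains a minimum value $c_d\ge 0$ on $S$, and the whole lemma reduces to showing this minimum is strictly positive.

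The key step is to verify $\Phi(P) > 0$ whenever $P\not\equiv 0$. Suppose instead $\Phi(P) = 0$ for some $P = \sum_{j=0}^d c_j z^j$; then for every $n$ the minimum $\min_{z\in\mathbb{D}}|P^{(n)}(z)|$ is zero and, the disc being compact, is attained, so each $P^{(n)}$ has a genuine zero in $\mathbb{D}$. Taking $n = d$, the polynomial $P^{(d)} \equiv d!\,c_d$ is constant, and a constant polynomial with a zero must be zero, forcing $c_d = 0$. With $c_d = 0$ the polynomial $P^{(d-1)} \equiv (d-1)!\,c_{d-1}$ is again constant, forcing $c_{d-1} = 0$; iterating this downward induction on $n$ kills every coefficient, so $P\equiv 0$, contrary to $\|P\|=1$. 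Hence $\Phi > 0$ on $S$, and by compactness and continuity $c_d = \min_S \Phi > 0$. Since $S$ depends only on $d$, so does $c_d$, which is precisely \eqref{poly-below}.

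I do not anticipate a serious obstacle: the only points needing care are the continuity of $P\mapsto\min_{z\in\mathbb{D}}|P^{(n)}(z)|$ (immediate from uniform convergence of derivatives on the compact disc) and the downward induction forcing a polynomial of unit norm to vanish. If one prefers an explicit constant, the same result follows by induction on $d$: normalize $\|P\|=1$ and write $P = Q + c_d z^d$ with $\deg Q \le d-1$; if $|c_d|\ge \eta_d$ then $n=d$ works since $|P^{(d)}|\equiv d!\,|c_d|\ge d!\,\eta_d$ on $\mathbb{D}$, while if $|c_d|<\eta_d$ then $\|Q\|=1$ and the inductive hypothesis gives some $n\le d-1$ with $|Q^{(n)}|\ge c_{d-1}$ on $\mathbb{D}$, so $|P^{(n)}(z)|\ge |Q^{(n)}(z)| - |c_d|\,d! \ge c_{d-1} - \eta_d d!$ for $z\in\mathbb{D}$; choosing $\eta_d = c_{d-1}/(2d!)$ yields the bound with $c_d = c_{d-1}/2$ and base value $c_0 = 1$. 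Either route makes the dependence on $d$ alone transparent.
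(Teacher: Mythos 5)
Your proposal is correct and follows essentially the same route as the paper: a compactness argument on the unit sphere of $\mathcal{P}_d$ for the functional $\max_{0\le n\le d}\min_{z\in\mathbb{D}}|P^{(n)}(z)|$, which the paper phrases as an equivalence-of-norms argument for the quasi-norm $|||\cdot|||$. Your downward induction on the coefficients supplies the definiteness property that the paper merely asserts, and your second, explicit induction on $d$ (yielding $c_d=2^{-d}$) is a valid self-contained alternative, but neither changes the substance of the argument.
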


\begin{proof}
Consider the ``norm'' 
$$
||| P ||| \ := \ \max_{j\ge 0} \min_{z \in {\mathbb D}} |P^{(j)}(z)|
$$
which satisfies (1) if $|||P||| = 0$, then $P=0$ and (2) $||| \lambda P ||| = |\lambda| |||P|||$
but it does not satisfy the triangle inequality. Nevertheless one can run the usual {\it equivalence of norms}
argument with this ``norm''. 

Let $S = \{P \in {\mathcal P}_d : \|P\| =1 \}$ denote the unit sphere in ${\mathcal P}_d$ with
respect to the norm $\|P\| = \max_j |c_j|$. We will show that
$$
c_d \ := \ \inf_{P \in S} ||| P ||| \ > \ 0
$$
which implies $||| P ||| \ \ge \ c_d \, \|P\|$ for all $P\in {\mathcal P}_d$ and this establishes \eqref{poly-below} by the
definition of this triple ``norm''  $||| \cdot |||$. 

If $c_d = 0$, then there exists a sequence $P_j \in S$ such that $|||P_j||| \to 0$
and $\|P_j - P\| \to 0$ for some $P \in S$. 
Although the triangle inequality does not hold for $||| \cdot |||$ we do have the inequality
$$
|||P||| \ \le \ |||P_j||| + \max_n  ||P^{(n)}_j - P^{(n)}||_{L^{\infty}({\mathbb D})} \ \le \
|||P_j ||| + C_d \| P_j - P\| \ \to \ 0,
$$
implying $|||P||| = 0$ and so $P = 0$ but this is impossible since $P \in S$.
\end{proof}

We turn our attention to our polynomial $Q(z) = c_0 + c_1 z + \cdots + c_d z^d$ in
Proposition \ref{sublevel-structure-basic} satisfying $|Q^{(k)}(z_0)| \ge 1$ and $|Q(z_0)| \le \epsilon$
for some $z_0 \in {\mathbb D}$. Set $\lambda = \max_j |c_j|$ and note that
$1 \lesssim_d |Q^{(k)}(z_0)| \lesssim_d \lambda$ implies $1 \lesssim_d \lambda$. We will
use Lemma \ref{hensel-L} to prove Proposition \ref{sublevel-structure-basic}. Since 
$\|Q\|_{L^{\infty}({\mathbb D}_2)} \le C_d \lambda$, it suffices to use $\lambda$ in place of $M$
when verifying the conditions of Lemma \ref{hensel-L}.

By Lemma \ref{triple-norm}, there exists an $0\le n \le d$ such that
$|Q^{(n)}(z)/n!| \ge c_d \lambda$. Since $\lambda \gtrsim_d 1$ and $\epsilon \ll_d 1$ is small,
then in fact it must be the case that $1\le n \le d$. Furthermore
$|Q^{(j)}(z)| \lesssim_d |Q^{(n)}(z)|$ for every $0\le j \le d$
and every $z \in {\mathbb D}$.
In particular when $k>n$, we have
\begin{equation}\label{k>n}
1 \ \le \ \bigl[\epsilon^{-1} |Q^{(k)}(z)| \bigr]^{1/k} \ \lesssim_d \ \bigl[ \epsilon^{-1} Q^{(n)}(z) \bigr]^{1/n},
\end{equation}
a bound we will use in the proof of Proposition \ref{sublevel-structure-basic}.

We fix small constants $c_1, c_2, \ldots, c_{n-1}$ (we define $c_n =1$), depending only on $d$, satsifying the relationships
\begin{equation}\label{c's}
c_j^2 \ \ll \ c_{j-1} c_{j+1}
\end{equation}
for every $2\le j \le n-1$. For each $2\le j \le n-1$, we will introduce small parameters $\rho_{\ell} = \rho_{\ell}(j), 2\le \ell \le j$ 
below satisfying \eqref{rho's}. We first choose the small parameter $c_j$'s satisfying \eqref{c's}, then we choose
the parmeters $\rho_{\ell}$ satisfying \eqref{rho's} 
and finally we choose $\epsilon_d$, depending on all these other parameters,
and assume $\epsilon < \epsilon_d$. 

We set
$$
K_j(z) \ := \  \bigl[c_{n-j} \epsilon^{-1} |Q^{(n-j)}(z)| \bigr]^{1/(n-j)} \ \ {\rm and} \ \ K(z) \ := \ \max_{0\le j \le n-1} K_j(z)
$$
and note that
\begin{equation}\label{K-k}
\epsilon^{-1/k} \ \le \ \bigl[ \epsilon^{-1} |Q^{(k)}(z_0)| \bigr]^{1/k} \ \lesssim_d \ K(z_0).
\end{equation}
This is clearly true if $k\le n$ and when $k>n$, this follows from \eqref{k>n}.

We consider several cases.

{\bf Case 0}:  $K(z_0) = K_0(z_0) = \bigl[ \epsilon^{-1} |Q^{(n)}(z_0)| \bigr]^{1/n}$. 

In this case, we apply Lemma \ref{hensel-L} with $L=1$ to $\phi = Q^{(n-1)}$ and so \\
$\delta = |Q^{(n-1)}(z_0) Q^{(n)}(z_0)^{-2}|$.
Hence
$$
\lambda \delta \ \lesssim_d  \ |Q^{(n-1)}(z_0)| |Q^{(n)}(z_0)|^{-1} \le c_{n-1}^{-1} 
\bigl[\epsilon^{-1}|Q^{(n)}(z_0)|\bigr]^{-1/n} 
$$
and so by \eqref{K-k}, we have 
$$
\lambda \delta \ \lesssim_d \  \bigl[
 \epsilon^{-1} |Q^{(n)}(z_0)|^{-1/n} \ \lesssim_d \ \epsilon^{1/k} |Q^{(k)}(z_0)|^{-1/k} \ \ll \ 1
$$
since $|Q^{(k)}(z_0)| \ge 1$ and $\epsilon < \epsilon_d$ is small. Hence Lemma \ref{hensel-L} shows there exists a zero $z_{*}$ of $Q^{(n-1)}$ such that
$$
|z_0 - z_{*}| \ \le \ 2 |Q^{(n-1)}{(z_0)} Q^{(n)}(z_0)^{-1}| \ \le \ 2 c_{n-1}^{-1} 
\bigl[\epsilon^{-1} |Q^{(n)}(z_0)|\bigr]^{-1/n}
$$
and so by \eqref{K-k}, we have
$$
|z_0 - z_{*}| \ \lesssim_d \ \bigl[
 \epsilon^{-1} |Q^{(n)}(z_0)|^{-1/n} \ \lesssim_d \ \epsilon^{1/k} |Q^{(k)}(z_0)|^{-1/k} \ \le \ \epsilon^{1/k}
$$
since $|Q^{(k)}(z_0)| \ge 1$. This completes the proof in Case 0.

{\bf Case 1}: $K(z_0) = K_1(z_0) = \bigl[c_{n-1} \epsilon^{-1} |Q^{(n-1)}(z_0)|\bigr]^{1/(n-1)}$.

We apply Lemma \ref{hensel-L} with $L=2$ to $\phi = Q^{(n-2)}$ so that 
$$
\delta \ = \ |Q^{(n-2)}(z_0) Q^{(n-1)}(z_0)^{-1} Q^{(n)}(z_0)^{-1}|.
$$
Hence 
$$
\lambda \delta \ \lesssim_d \ |\epsilon^{-1}Q^{(n-1)}(z_0)|^{-1/(n-1)} \ \lesssim_d  \ 
\epsilon^{1/k} |Q^{(k)}(z_0)|^{-1/k} 
$$
by \eqref{K-k}. Hence $\lambda \delta \ll 1$ since $|Q^{(k)}(z_0)|\ge 1$ and
$\epsilon < \epsilon_d$ is small. For Lemma \ref{hensel-L}, we also need to verify that $\delta_1 \ll 1$ where
$\delta_1 = |Q^{(n-2)}(z_0) Q^{(n-1)}(z_0)^{-2} Q^{(n)}(z_0)|$. We have
$$
\delta_1 \ \le \ \frac{c_{n-1}^{2}}{c_{n-2}} \, \frac{\epsilon}{\epsilon^{(n-2)/(n-1)}} 
\frac{\epsilon}{\epsilon^{n/(n-1)}} |Q^{(n-1)}(z_0)|^{\frac{n-2}{n-1} + \frac{n}{n-1} - 2} \ = \ 
c_{n-1}^{2} c_{n-2}^{-1} \ \ll \ 1
$$
by \eqref{c's} applied to $j = n-1$ (recall $c_n = 1$).
Therefore Lemma \eqref{hensel-L} implies there is a zero $z_{*}$ of $Q^{(n-2)}$
such that
$$
|z_0 - z_{*}| \ \le \ 2 |Q^{(n-2)}(z_0) Q^{(n-1)}(z_0)^{-1}| \ \lesssim_d \ \bigl[\epsilon^{-1} |Q^{(n-1)}(z_0)|\bigr]^{-1/(n-1)}
$$
and so by \eqref{K-k},
$$
|z_0 - z_{*}| \ \lesssim_d \ \bigl[\epsilon^{-1} |Q^{(k)}(z_0)|\bigr]^{-1/k} \ \lesssim_d \ \epsilon^{1/k}
$$
since $|Q^{(k)}(z_0)| \ge 1$. The completes the proof in Case 1.

{\bf General case j}: \ $K(z_0) = K_j(z_0)$.  Here $2\le j \le n$.

We split the general case $j$ into $j$ subcases. To define these subcases,
we introduce the following conditions:
$$
|Q^{(n-j +\ell)}(z_0)| \,  |Q^{(n-j+\ell -1)}(z_0)|^{-1} \ \le \ \rho_{\ell}^{-1} \, K_j(z_0) 
\eqno{{\mathcal S}_{\ell}}
$$
for $2\le \ell \le j$ where $\rho_{\ell}$ are small constants satisfying the relationships
\begin{equation}\label{rho's}
\rho_{\ell} \ \ll \ \rho_{\ell+1}
\end{equation}
for every $2\le \ell \le j-1$.

$$
{\rm \bf subcase} \ 1: \ \ \  \ \  (I_1): \ \ K_j(z_0) \, |Q^{(n-1)}(z_0)| \ \le \ \rho_j \, |Q^{(n)}(z_0)|. 
$$

If $z_0$ does not satisfy $(I_1)$, then the property ${\mathcal S}_j$ holds. Inductively the other subcases
for $2\le \ell \le j-1$ are defined by

\hskip 35pt {\rm \bf subcase \ $\ell$}: \ \ $(I_{\ell-1})$ \ does  not hold but
$$
\ \ \ \ \ (I_{\ell}): \ \ 
K_j(z_0) \, |Q^{(n-\ell)}(z_0)| \ \le \ \rho_{j-\ell+1} \, |Q^{(n-\ell+1)}(z_0)|
$$
holds.

If $(I_{\ell})$ does not hold, then the properties ${\mathcal S}_{j-r}$ hold
for every $0\le r \le \ell -1 $. The final subcase is

\hskip 35pt {\rm \bf subcase \ $j$}: \ \ $(I_{j-1})$ \ does  not hold.

In this last subcase  $j$, all the  
properties ${\mathcal S}_{\ell}$, for $2\le \ell \le j$ hold.

Let us consider this final subcase. We apply Lemma \ref{hensel-L} with $L = (j+1)$ to 
$\phi = Q^{(n-j-1)}$ so that
$\delta = |Q^{(n-j-1)}(z_0) Q^{(n-j)}(z_0)^{-1} Q^{(n)}(z_0)^{-1}|$. Also for $1\le \ell \le j$, we have
$$
\delta_{\ell} \ := \ |Q^{(n-j-1)}(z_0) Q^{(n-j)}(z_0)^{-1} Q^{(n-j+\ell -1)}(z_0)^{-1} Q^{(n-j+\ell)}(z_0)|.
$$
We need to verify $\lambda \delta, \, \delta_1 \ll 1$ and for $2\le \ell \le j$, $\delta_{\ell} \lesssim_d 1$.

First, we have
$$
\lambda \delta \ \lesssim_d \  \bigl[\epsilon^{-1} |Q^{(n-j)}(z_0)|\bigr]^{-1/(n-j)} \,
  \ \lesssim_d  \ \epsilon^{1/k} |Q^{(k)}(z_0)|^{-1/k} 
$$
by \eqref{K-k} and so
$$
\lambda \delta \ \lesssim_d \ \epsilon^{1/k} \ \ll \ 1
$$
since $|Q^{(k)}(z_0)| \ge 1$ and $\epsilon < \epsilon_d$ is small. Also 
$$
\delta_1 \le 
c_{n-j-1}^{-1} c_{n-j}^{\frac{n-j-1}{n-j}} c_{n-j+1}^{-1} c_{n-j}^{\frac{n-j+1}{n-j}} \epsilon^{1/(n-j)} \epsilon^{-1/(n-j)}
|Q^{(n-j)}(z_0)|^{\frac{n-j-1}{n-j} + \frac{n-j+1}{n-j} - 2}
$$
$$
= \ \ \  c_{n-j-1}^{-1} c_{n-j+1}^{-1} c_{n-j}^{2} \ \ll \ 1
$$
by \eqref{c's}. Finally for every $2\le \ell \le j$, since the properties ${\mathcal S}_{\ell}$ for for every $2\le \ell \le j$ hold,
we have
$$
\delta_{\ell} \ \lesssim_d \  |Q^{(n-j-1)}(z_0) Q^{(n-j)}(z_0)^{-1}| \, K_j(z_0) \ \lesssim_d \ 1.
$$
Hence there exists a zero $z_{*}$ of $Q^{(n-j-1)}$ such that
$$
|z_0 - z_{*}| \ \le \ 2 |Q^{(n-j-1)}(z_0) Q^{(n-j)}(z_0)^{-1}| \ \lesssim_d 
\bigl[\epsilon^{-1} 
|Q^{(n-j)}(z_0)|\bigr]^{-1/(n-j)} 
$$
$$ 
\lesssim_d \
\epsilon^{1/k} |Q^{(k)}(z_0)|^{-1/k} \ \lesssim_d \ \epsilon^{1/k}
$$
by \eqref{K-k} and the fact that $|Q^{(k)}(z_0)| \ge 1$.

Finally we consider the subcases $\ell$ with $2\le \ell \le j-1$. In these subcases,
we see that the properties ${\mathcal S}_{j-r}$ hold for
$0\le r \le \ell -2$. Here we apply Lemma \ref{hensel-L} with $L = \ell$ to $\phi = Q^{(n-\ell)}$
so that 
$$
\delta \ = \ |Q^{(n-\ell)}(z_0) Q^{(n-\ell +1)}(z_0)^{-1} Q^{(n)}(z_0)^{-1}|.
$$
Also for
$1\le t \le \ell -1$,
$$
\delta_t \ = |Q^{(n-\ell)}(z_0) Q^{(n-\ell +1)}(z_0)^{-1} Q^{(n-\ell + t)}(z_0)^{-1} Q^{(n-\ell + t + 1)}(z_0)|.
$$
We need to verify $\lambda \delta, \, \delta_1 \ll 1$ and for $2\le t \le \ell -1$, $\delta_{t} \lesssim_d 1$.
We have
$$
\lambda \delta \ \lesssim_d \  \bigl[\epsilon^{-1} |Q^{(n-j)}(z_0)|\bigr]^{-1/(n-j)} 
 \ \lesssim_d  \ \epsilon^{1/k} |Q^{(k)}(z_0)|^{-1/k} \  \ll \ 1
$$
by \eqref{K-k}, our hypothesis $|Q^{(k)}(z_0)|\ge 1$ and 
since $\epsilon < \epsilon_d$ is small. Also since ${\mathcal S}_{j-\ell +2}$ holds, we have
$$
\delta_1 \le 
\rho_{j-\ell +1} K_j(z_0)^{-1} |Q^{(n-\ell+2)}(z_0) Q^{(n-\ell +1)}(z_0)^{-1}|  \le  \frac{\rho_{j-\ell+1}}{\rho_{j-\ell +2}} \, K_j(z_0)^{-1}
K_j(z_0) \ll 1
$$
by \eqref{rho's}. Also since ${\mathcal S}_{j-\ell +t+1}$ holds for $2\le t \le \ell -1$,
$$
\delta_t \ \lesssim_d \ 
 K_j(z_0)^{-1} |Q^{(n-\ell+t+1)}(z_0) Q^{(n-\ell +t)}(z_0)^{-1}|  \ \lesssim_d \  K_j(z_0)^{-1}
K_j(z_0) \ = \  1.
$$
Hence there exists a zero $z_{*}$ of $Q^{(n-\ell)}$ such that
$$
|z_0 - z_{*}| \ \le \ 2 |Q^{(n-\ell)}(z_0) Q^{(n-\ell +1)}(z_0)^{-1}| \ \lesssim_d \ K_j(z_0)^{-1} \ \le \ \epsilon^{1/k} |Q^{(k)}(z_0)|^{-1/k}
\ \le \ \epsilon^{1/k}
$$
by \eqref{K-k} and from our hypothsis $|Q^{(k)}(z_0)| \ge 1$. 

This completes the proof of Proposition \ref{sublevel-structure-basic}.

\section{Proof of Lemma \ref{hensel-L}}\label{hensel}

It remains to prove Lemma \ref{hensel-L} which we restate for convenience.

{\bf Lemma 9.3} \ 
{\it Let $\phi \in {\mathcal H}({\mathbb D}_2)$ with $M = M_{\phi} := \max_{z\in {\mathbb D}_{7/4}} |\phi(z)|$. Fix $L\ge 1$.
Suppose $z_0 \in {\mathbb D}$ is a point where $\phi^{(k)}(z_0) \not= 0$  for each $1\le k \le L$. 
Set \\
$\delta := |\phi(z_0)\phi'(z_0)^{-1}\phi^{(L)}(z_0)^{-1}|$ and for $1\le k \le L-1$, set
$$
\delta_k \ := \ |\phi^{(k+1)}(z_0) \phi^{(k)}(z_0)^{-1} \phi(z_0) \phi'(z_0)^{-1}|.
$$
Suppose $\delta_k \lesssim_L 1,  \, 2\le k \le L-1$ and suppose $|\phi(z_0)\phi'(z_0)^{-1}| \le 1/8$.
Then if $\delta M, \, \delta_1 \ll_L 1$,  there is a $z\in {\mathbb D}_{5/4}$ such that
$$
(a) \ \phi(z) \ = \ 0 \ \ \ {\rm and} \ \ \ (b) \ |z-z_0| \ \le \  2 |\phi(z_0)\phi'(z_0)^{-1}|.
$$}

We will only give the proof in the case $L\ge 2$. The case $L=1$ is easier.

We define a sequence $\{z_n\}$ recursively by
\begin{equation}\label{recursive}
z_n \ = \ z_{n-1} - \phi(z_{n-1}) \phi'(z_{n-1})^{-1}
\end{equation}
and set $\Lambda := c_L \delta_1$ for some large constant $c_L$ chosen later. Then our condition
$\delta_1 \ll_L 1$ will imply in particular $\Lambda \le 1/2$. We make the
following claim.

{\bf Claim:} \ For every $n\ge 1$,

$(1)_n \ \ |z_n - z_{n-1}| \ \le \ |\phi(z_{0}) \phi'(z_{0})^{-1}| \, \Lambda^{2^{n-1} - 1}, $

$(2)_n \ \ |\phi(z_{n-1})| \ \le \ |\phi(z_0)| \Lambda^{2^{n-1}-1}, $

$(3)_n \ |\phi'(z_{n-1})| \ \ge \ (1 - \epsilon_{n-1}) \, |\phi'(z_0)|,$ 

$(4)_n$ \ for $2\le k \le L$, $|\phi^{(k)}(z_{n-1})| \le (1 + \varepsilon_{n-1}) |\phi^{(k)}(z_0)|$

where for $n\ge 2$, $\epsilon_{n-1} = \sum_{j=2}^{n} 2^{-j}$ and $\varepsilon_{n-1} = \sum_{j=1}^{n-1} 2^{-j}$.
When $n=1$, we set $\epsilon_0 = \varepsilon_0 = 0$.
Note the claim implies that for every $n\ge 1$,
\begin{equation}\label{sequence-II}
|z_n - z_0| \ \le \ \sum_{j=1}^n |z_j - z_{j-1}| \ \le \ |\phi(z_0)\phi'(z_0)^{-1}| \sum_{j\ge 0} 2^{-j} \ \le \ 2/8
\end{equation}
and hence $z_n \in {\mathbb D}_{5/4}$.

Before we start the proof of the claim, we review Taylor series with remainder for a $\psi \in {\mathcal H}({\mathbb D}_2)$: 
for any $m\ge 0$,
\begin{equation}\label{TS}
\psi(z_n) \ = \ \psi(z_{n-1}) \ + \ \sum_{j=1}^m \frac{\psi^{(j)}(z_{n-1})}{j!} (z_n - z_{n-1})^j \ + \ R_{m,\psi}
\end{equation}
where $z_t := z_{n-1} + t(z_n - z_{n-1})$ and
$$
R_{m, \psi} \ := \ 
\frac{1}{m!} \int_0^1 \psi^{(m+1)}(z_t) \, dt \ (z_n - z_{n-1})^{m+1}.
$$
The sum in \eqref{TS} does not appear when $m=0$.
We will have need to apply \eqref{TS} for $\psi(z) = \phi^{(k)}(z)$ where $|\phi(z)| \le M$ for $z\in {\mathbb D}_{7/4}$.
Since $z_n \in {\mathbb D}_{5/4}$ for all $n$, then $z_t \in {\mathbb D}_{5/4}$ and so 
by Cauchy's integral formula,
$$
\psi^{(m)}(z_t) \ = \ \frac{(m+k)!}{2\pi i} \int_{C_{1/2}(z_t)} \frac{\phi(w)}{(w - z_t)^{m+k +1}} \, dw,
$$
implying $|\psi^{(m)}(z_t)| = |\phi^{(m+k)}(z_t)| \lesssim_{m,k} M$ and hence
\begin{equation}\label{remainder}
|R_{m,k} | \ \lesssim_{m,k} M \, |z_n - z_{n-1}|^{m+1}
\end{equation}
where $R_{m,k} = R_{m,\psi}$ with $\psi =  \phi^{(k)}$.

We now proceed with the proof of the claim.
The claim is true for $n=1$ and so suppose $(1)_j, (2)_j$ and $(3)_j$ holds for all $1\le j \le n$.
Note that $(3)_n$ implies $\phi'(z_{n-1}) \not= 0$ and so $z_n$ is well-defined. 
We begin with proving $(4)_{n+1}$. Applying \eqref{TS} to $\psi = \phi^{(k)}$ for $2\le k \le L$ and $m=L-k$, we see
\begin{equation}\label{TS-II}
\phi^{(k)}(z_n) \ = \ \phi^{(k)}(z_{n-1}) + \sum_{j=1}^{L-k} \frac{\phi^{(k+j)}(z_{n-1})}{j!} (z_n - z_{n-1})^j \ + \ R_{L-k,k}
\end{equation}
where
by the remainder estimate in \eqref{remainder}, 
$$
|R_{L-k,k}| \ \lesssim_L \ M |z_n - z_{n-1}|^{L-k+1} \ \lesssim_L M |\phi(z_0)\phi'(z_0)^{-1}| \Lambda^{2^{n-1}-1} |z_n - z_{n-1}|^{L-k}
$$
\begin{equation}\label{remainder-II}
\ \lesssim_L \ M \delta \Lambda^{2^{n-1}-1} |\phi^{(L)}(z_0)|  |z_n - z_{n-1}|^{L-k} .
\end{equation}
Furthermore by $(1)_n$ and $(4)_n$, for each $1\le j \le L-k$, 
$$
|\phi^{(k+j)}(z_{n-1}) (z_n - z_{n-1})^j | \ \lesssim_L \ \Lambda^{2^{n-1}-1} |\phi(z_0)\phi'(z_0)^{-1}|^j |\phi^{(k+j)}(z_0)| .
$$
By the definition of the $\delta_{\ell}, \, 2 \le \ell \le L-1$ in the statement of Lemma \ref{hensel-L}, we have
$$
|(\phi(z_0)\phi'(z_0)^{-1})^j \phi^{(k+j)}(z_0)| \ = \ |(\delta_k \cdots \delta_{k+j-1}) \phi^{(k)}(z_0)| \ \lesssim_L |\phi^{(k)}(z_0)|
$$
for $1\le j \le L-k$ and hence
\begin{equation}\label{k+j}
|\phi^{(k+j)}(z_{n-1}) (z_n - z_{n-1})^j | \ \lesssim_L \ \Lambda^{2^{n-1}-1}  |\phi^{(k)}(z_0)| .
\end{equation}

Plugging \eqref{remainder-II} and \eqref{k+j} into \eqref{TS-II}, we see that 
$$
|\phi^{(k)}(z_n)| \ \le \ |\phi^{(k)}(z_0)| \bigl[1 + \varepsilon_{n-1} + b_L \Lambda^{2^{n-1}-1} \bigr] 
$$
by $(4)_n$. But $b_L \Lambda^{2^{n-1}-1} \ll 2^{-2(n-1)} \le 2^{-n}$ and so
$$
|\phi^{(k)}(z_n)| \ \le \ |\phi^{(k)}(z_0)| \bigl[1 + \varepsilon_{n-1} + 2^{-n} \bigr]  \ = \ (1 + \varepsilon_n) |\phi^{(k)}(z_0)|,
$$
establishing $(4)_{n+1}$.

We now turn to $(3)_{n+1}$. Applying \eqref{TS} to $\psi = \phi'$ and $m=L-1$, we see
\begin{equation}\label{TS-III}
\phi'(z_n) \ = \ \phi'(z_{n-1}) + \sum_{j=1}^{L-1} \frac{\phi^{(j+1)}(z_{n-1})}{j!} (z_n - z_{n-1})^j \ + \ R_{L,k}
\end{equation}
where
by the remainder estimate in \eqref{remainder}, 
$$
|R_{L,k}| \ \lesssim_L \ M |z_n - z_{n-1}|^{L} \ \lesssim_L M |\phi(z_0)\phi'(z_0)^{-1}| \Lambda^{2^{n-1}-1} |z_n - z_{n-1}|^{L-1}
$$
\begin{equation}\label{remainder-III}
\ \lesssim_L \ M \delta \Lambda^{2^{n-1}-1} |\phi^{(L)}(z_0)|  |z_n - z_{n-1}|^{L-1} .
\end{equation}
Furthermore by $(1)_n$ and $(4)_n$, for each $1\le j \le L-1$, 
$$
|\phi^{(j+1)}(z_{n-1}) (z_n - z_{n-1})^j | \ \lesssim_L \ \Lambda^{2^{n-1}-1} |\phi(z_0)\phi'(z_0)^{-1}|^j |\phi^{(j+1)}(z_0)| .
$$
Again using the definition of the $\delta_{\ell}, \, 2\le \ell \le L-1$,
$$
|(\phi(z_0)\phi'(z_0)^{-1})^j \phi^{(j+1)}(z_0)| \ = \ |(\delta_2 \cdots \delta_{j}) \phi(z_0)\phi'(z_0)^{-1}\phi''z_0)| \ \lesssim_L \
\delta_1 |\phi'(z_0)|.
$$
for $1\le j \le L-1$ and hence
\begin{equation}\label{1+j}
|\phi^{(j+1)}(z_{n-1}) (z_n - z_{n-1})^j | \ \lesssim_L \ \Lambda^{2^{n-1}-1}  \delta_1 |\phi'(z_0)| .
\end{equation}
There is a similar bound for the right hand side of \eqref{remainder-III} and using this
and \eqref{1+j} in \eqref{TS-III}, we see that 
$$
|\phi'(z_n)| \ \ge \ |\phi'(z_0)| \bigl[1 - \epsilon_{n-1} - b_L  \delta_1 \Lambda^{2^{n-1}-1}\bigr] 
$$
by $(3)_n$. But $b_L \delta_1 \Lambda^{2^{n-1}-1} \le b_L \delta_1 (1/2)^{n-1} \le 2^{-n-1}$ since we'll take
$\delta_1$ small so that $b_L \delta_1 \le 1/4$. Therefore
$$
|\phi(z_n)| \ \ge \ |\phi'(z_0)| \bigl[1 - \epsilon_{n-1} - 2^{-n-1}\bigr]  \ = \ |\phi'(z_0)| \bigl[ 1 - \epsilon_n \bigr],
$$
completing the proof of $(3)_{n+1}$.

For $(2)_{n+1}$, we apply \eqref{TS} with $\psi = \phi$ and $m=L$ to conclude
$$
\phi(z_n)  \ = \ \phi(z_{n-1}) + \phi'(z_{n-1}) (z_n - z_{n-1}) + \sum_{j=2}^L \frac{\phi^{(j)}(z_{n-1})}{j!} \ (z_n - z_{n-1})^j \ + \
R_L
$$
and since $\phi(z_{n-1}) + \phi'(z_{n-1}) (z_n - z_{n-1}) = 0$ by definition of $z_n$, we have
$$
\phi(z_n)  \ = \ \sum_{j=2}^L \frac{\phi^{(j)}(z_{n-1})}{j!} \ (z_n - z_{n-1})^j \ + \ R_L
$$
where 
\begin{equation}\label{remainder-IV}
|R_L| \ \lesssim_L \ M |z_n - z_{n-1}|^{L+1} \ \lesssim_L \ M \delta \Lambda^{2^{n-1}-1} |\phi^{(L)}(z_0)|  |z_n - z_{n-1}|^L 
\end{equation}
as before. For $2\le j \le L$, we have by $(4)_n$,
$$
|\phi^{(j)}(z_{n-1})| |z_n - z_{n-1}|^j \le 2 |\phi^{(j)}(z_0)| |\phi(z_0) \phi'(z_0)^{-1}|^{j-1} \Lambda^{2^n - 2} |\phi(z_0)\phi'(z_0)^{-1}| .
$$
Proceeding as above, using the definition of the $\delta_{\ell}$'s, we have
$$
|\phi^{(j)}(z_0)| |\phi(z_0) \phi'(z_0)^{-1}|^{j-1}|  \ = \ (\delta_2\cdots \delta_{j-1}) |\phi''(z_0) \phi(z_0) \phi'(z_0)^{-1}|,
$$
implying
$$
|\phi^{(j)}(z_{n-1})| |z_n - z_{n-1}|^j \ \lesssim_L  \delta_1 \Lambda^{2^n - 2} |\phi(z_0)|.
$$
We have a similar estimate for the right hand side of \eqref{remainder-IV} and so, althogether, we have
$$
|\phi(z_n)| \ \stackrel{(*)}{\le} b_L \delta_1 \Lambda^{2^n -2} |\phi(z_0)| \ \le \ \Lambda^{2^n -1} |\phi(z_0)|,
$$
completing the proof of $(2)_{n+1}$ by ensuring $b_L \le c_L$ (recall $\Lambda = c_L \delta_1$).

Finally for $(1)_{n+1}$, we use $(2)_{n+1}$,$(3)_ {n+1}$ and the inequality $(*)$ above to see
$$
|z_{n+1} - z_n| \ = \ |\phi(z_n) \phi'(z_n)^{-1}| \ \le \ 2 |\phi'(z_0)^{-1} \phi(z_n)| \ \le \ 2 b_L \delta_1 \Lambda^{2^n -2} |\phi(z_0)|.
$$
By taking $c_L \ge 2 b_L$, we see that $|z_{n+1} - z_n| \le |\phi(z_0)| \Lambda^{2^n -1}$ which establishes $(1)_{n+1}$,
completing the proof of the claim.

Statement $(1)_n$ of the claim implies that for $m\le n$,
$$
|z_n - z_m| \ \le \ \sum_{j=m+1}^n |z_j - z_{j-1}| \ \le \delta |\phi'(z_0)| \sum_{j\ge m} (1/2)^{2^{j-1} -1} \ \to \ 0 
$$
as $m\le n \to \infty$ and hence $\{z_n\}$ forms a Cauchy sequence of complex numbers and hence $z_n \to z$ for some
$z \in {\mathbb D}_{5/4}$. The statement $(2)_n$ then implies $\phi(z) = 0$ and \eqref{sequence-II} shows
that $z \in {\mathbb D}_{5/4}$ and $|z - z_0| \le 2 |\phi(z_0) \phi'(z_0)^{-1}|$. 
This completes the proof of Lemma \ref{hensel-L}.

\section*{{\bf \large Some applications}}

We give two applications of the main estimate in Theorem \ref{H-main-osc} which are contained
in Propositions \ref{PS-intro} and \ref{ACK-intro}. The first is a complex version
of an oscillatory integral bound for polynomial phases due to Phong and Stein in \cite{PS-I}. Second, we prove
a complex version of a result of Arkhipov, Chubarikov and Karatsuba \cite{ACK-1} on the convergence exponent for
the singular integral in Tarry's problem which is equivalent to the $L^q$ integrability of the fourier extension
operator $E b$ on the function $b = 1$ with respect to complex curves. 

\section{Proof of Proposition \ref{PS-intro}}\label{Phong-Stein}

Let $f \in {\mathbb C}[X]$ have degree at most $d$. Consider the derivative
$f'(z) = a \prod_{j=0}^m (z - \xi_j)^{e_j}$ where $\{\xi_j\}$ are the distinct roots of
$f'$ with multiplicities $\{e_j\}$. Here we prove the complex version of a stable bound
for oscillatory integrals with polynomial phases due to Phong and Stein.

\begin{proposition}\label{PS-bound-complex} For any $f \in {\mathbb C}[X]$
and $\phi \in C^{\infty}_c({\mathbb C})$, we have
\begin{equation}\label{PS-bound-integral}
\Bigl| \int_{{\mathbb C}} {\rm e} (f(z)) \, \phi(z) \, dz \Bigr| \ \le \ C_{d,\phi} \
\max_{\xi} \ \min_{{\mathcal C}\ni \xi}
\Bigl[\frac{1}{| a \prod_{\xi_j \notin {\mathcal C}} (\xi - \xi_j)^{e_j}|} \Bigr]^{1/(S({\mathcal C}) + 1)}.
\end{equation}
\end{proposition}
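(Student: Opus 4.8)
The plan is to deduce Proposition~\ref{PS-bound-complex} from the main oscillatory integral bound, Theorem~\ref{H-main-osc}, applied not to $f$ directly but to the derivative data, combined with a covering of $\mathrm{supp}(\phi)$ into regions governed by the roots $\{\xi_j\}$ of $f'$. The key bridge is the relation established in Section~\ref{H-functional} (and to be proved directly in this section, in the spirit of \eqref{PS-H-real}) between the $H$ functional $H_{f,\phi}$ and the root-cluster quantity
$$
\mathrm{PS}(f) \ := \ \max_{\xi}\ \min_{{\mathcal C}\ni\xi}\ \Bigl[\frac{1}{|a\prod_{\xi_j\notin{\mathcal C}}(\xi-\xi_j)^{e_j}|}\Bigr]^{1/(S({\mathcal C})+1)}.
$$
First I would reduce to a local statement: by a partition of unity it suffices to bound $|I_{\phi}(f)|$ when $\mathrm{supp}(\phi)$ is contained in a disc $D$ of controlled size, and the right-hand side of \eqref{PS-bound-integral} should be compared with $H_{f,D}^{-2}$. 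Since $H_f(z)^{k}\ge |\partial^{k}f(z)|/k!$ and $f^{(k+1)}(z) = (k+1)!\,[\text{coefficient data of }f']$, the quantity $H_{f,\phi}$ is, up to constants depending on $d$, the quantity $\max_{z\in\mathrm{supp}\phi}\min$ over the orders $k\ge 1$ of $|f^{(k)}(z)|^{1/k}$, and the dominant contribution for $k\ge 2$ is $\sim |f^{(k)}(z)|^{1/k}\sim |(f')^{(k-1)}(z)|^{1/k}$.

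The heart of the argument is therefore the inequality
$$
\mathrm{PS}(f)^{-1} \ \lesssim_d \ H_{f,\phi},
$$
equivalently $H_{f,\phi}^{-2}\lesssim_d \mathrm{PS}(f)^{2}$, after which Theorem~\ref{H-main-osc} gives $|I_\phi(f)|\lesssim_{d,\phi} H_{f,\phi}^{-2}\lesssim_{d,\phi}\mathrm{PS}(f)^2$, and this is even stronger than \eqref{PS-bound-integral} — wait, one must be careful: \eqref{PS-bound-integral} has the first power of $\mathrm{PS}(f)$, and Theorem~\ref{H-main-osc} produces a square. So the correct comparison is with the \emph{localized} bound \eqref{min-R}: on a disc $D=\mathbb{D}_\rho(z_0)$ one has $|I_{\phi_D}(f)|\lesssim_{d} \rho^{0}\min(\rho,H_{f,D}^{-1})^2$, and the point is to choose the covering discs $D$ so that on each one $\min(\rho,H_{f,D}^{-1})\lesssim_d \mathrm{PS}(f)$ while $\sum \min(\rho,H_{f,D}^{-1})^2 / \mathrm{PS}(f)^2 \lesssim_d \mathrm{PS}(f)$, i.e. so that the $\ell^1$ sum of the local bounds is controlled by $\mathrm{PS}(f)$. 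Concretely, following the Phong--Stein strategy: sort the roots $\xi_j$ by the structure of root clusters, associate to each $\xi_j$ its optimal cluster ${\mathcal C}_j$ and radius $r_j = [\,|a\prod_{\xi_k\notin{\mathcal C}_j}(\xi_j-\xi_k)^{e_k}|\,]^{-1/(S({\mathcal C}_j)+1)}$, cover $\mathrm{supp}(\phi)$ by $O_d(1)$ discs of radius comparable to the various $r_j$ (plus one outer region where $|f'|$ is large), and on each such disc estimate $H_{f,D}$ from below by the scaling heuristic $H_{f,D}\gtrsim_d r_j^{-1}$, which is exactly \eqref{H-sublevel-equivalence} together with the Proposition~3.3 from \cite{KW} statement quoted in Section~\ref{H-functional} relating sublevel sets of $f$ (equivalently of $f'$, after integrating) to the balls $B_{r_j}(\xi_j)$.

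I expect the main obstacle to be precisely the bookkeeping of this covering: one must show that the diameters $r_j$ of the Phong--Stein balls interact with $H_f$ exactly as in the real case, i.e. establish the complex analogue of \eqref{PS-H-real}, namely that on each cluster ball the derivative $f^{(k)}$ of the appropriate order $k=S({\mathcal C}_j)+1$ satisfies $|f^{(k)}(z)|^{1/k}\gtrsim_d r_j^{-1}$ for $z$ in that ball, and then sum the localized oscillatory bounds \eqref{min-R} over the $O_d(1)$ pieces without losing the correct power. The subtlety special to ${\mathbb C}$ is that we cannot invoke the intermediate value theorem to cut $\mathrm{supp}(\phi)$ into monotonicity intervals as in \cite{PS-I}; instead the finiteness of the covering must come purely from the degree bound ($f'$ has at most $d-1$ roots, so at most $d-1$ clusters arise) and from the disjointification/bounded-overlap lemmas already developed in Section~\ref{Thm 5.1} (Lemma~\ref{bounded-overlap}). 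Once the comparison $H_{f,D}\gtrsim_d r_j^{-1}$ is in hand on each ball, the final summation is routine: $|I_\phi(f)|\le\sum_j |I_{\phi_j}(f)| \lesssim_d \sum_j \min(r_j, H_{f,D_j}^{-1})^2 \lesssim_d \sum_j r_j^2 \lesssim_d (\max_j r_j)\cdot\sum_j r_j \lesssim_d \mathrm{PS}(f)$, using $r_j\le \mathrm{PS}(f)$ and that there are $O_d(1)$ terms — giving exactly \eqref{PS-bound-integral}.
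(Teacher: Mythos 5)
Your proposal is a covering argument: partition $\mathrm{supp}(\phi)$ into root-cluster balls around the zeros of $f'$, apply the localized oscillatory bound \eqref{min-R} on each, and sum. The paper's proof is instead a single pointwise comparison. By Theorem~\ref{H-main-osc} the proposition reduces to showing
$$
\min_{\xi}\ \max_{{\mathcal C}\ni\xi}\ \bigl[\,|a\prod_{\xi_j\notin{\mathcal C}}(\xi-\xi_j)^{e_j}|\,\bigr]^{1/(S({\mathcal C})+1)}\ \lesssim_d\ H_f,
$$
and this is proved by fixing an arbitrary $z_*$, choosing $\xi$ to be the root of $f'$ nearest to $z_*$, ordering the remaining roots by distance from $z_*$, grouping them via a large separation constant $A$, and comparing $|f^{(1+\rho_k)}(z_*)|$ against the products of distant root factors (the estimates \eqref{H-1}, \eqref{H-F} and \eqref{Q-k}). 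No covering of $\mathrm{supp}(\phi)$, no partition of unity, and no sublevel-set structure theorem enter.

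The substantive gap in your sketch is the local estimate $H_{f,D_j}\gtrsim_d r_j^{-1}$ on each cluster ball, which you defer to \eqref{H-sublevel-equivalence} together with the \cite{KW} ball inclusion quoted in Section~\ref{H-functional}. Those facts concern sublevel sets of a polynomial in terms of its \emph{own} roots and produce cluster radii with exponent $1/S({\mathcal C})$, whereas the Phong--Stein radii attached to the roots of $f'$ carry the shifted exponent $1/(S({\mathcal C})+1)$; bridging the two (in effect integrating $f'$ once) is precisely the algebraic content the paper supplies from scratch, so the covering framework repackages the hard step rather than avoiding it. There is also a power-counting slip at the end: $\sum_j r_j^2\lesssim_d(\max_j r_j)\sum_j r_j$ yields the first power only with the additional input $\sum_j r_j\lesssim_d 1$ (true once the radii are truncated to the scale of $\mathrm{supp}(\phi)$, but unstated), and in any case the paper's pointwise inequality feeds directly into $H_{f,\phi}^{-2}$ to give the stronger square bound, which is what Proposition~\ref{PS-intro} records with its exponent $2/(S({\mathcal C})+1)$; your covering route surrenders that extra power.
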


\begin{proof}
By Theorem \ref{H-main-osc}, it suffices to prove 
$$
\min_{\xi} \, \max_{{\mathcal C}\ni \xi} \
\bigl[ |a \prod_{\xi_j\notin {\mathcal C}} (\xi - \xi_j)^{e_j}|\bigr]^{1/(S({\mathcal C}) + 1)} \ \le \
C_d \, H_f 
$$
where $H_f := \inf_{z \in {\mathbb C}} H_f(z)$ (note that $H_{f,\phi} \le H_f$). The above bound
in turn is implied by the following: for every $z_{*} \in {\mathbb C}$, there is a root $\xi$
of $f'$ such that 
\begin{equation}\label{PS-H-complex}
\bigl[ |a \prod_{\xi_j\notin {\mathcal C}} (\xi - \xi_j)^{e_j}|\bigr]^{1/(S({\mathcal C}) + 1)} \ \le \
C_d \, H_f(z_{*})
\end{equation} 
holds for every root cluster ${\mathcal C}$ containing $\xi$.

We fix $z_{*} \in {\mathbb C}$ and choose any root $\xi$ of $f'$ such that
$|z_{*} - \xi| = \min_j |z_{*} - \xi_j|$. Let ${\mathcal C}$ be any root cluster
containing $\xi$. Without loss of generality, suppose that $\xi = \xi_0$ and that the rest of roots 
are ordered so that
$$
|z_{*} - \xi| \ \le \ |z_{*} - \xi_1| \ \le \ |z_{*} - \xi_2| \ \le \ \cdots \ \le \ |z_{*} - \xi_m|.
$$
We fix a large constant $A>0$ to be determined later and set $j_0 = 0$. Let
$j_1 \ge 1$ be the smallest integer such that
$A |z_{*} - \xi_{j_1 -1}| \le |z_{*} - \xi_{j_1}|$. Next let $j_2 \ge j_1 + 1$ be the smallest integer
such that $A |z_{*} - \xi_{j_2 -1}| \le |z_{*} - \xi_{j_2}|$. And so on..., producing a sequence
$0 = j_0 < j_1 < \ldots < j_t$ of integers with
$$
A \, |z_{*} - \xi_{j_k -1}| \ \le \ |z_{*} - \xi_{j_k}| \ \ {\rm for \ every} \ \ 1\le k \le t
$$
and $|z_{*} - \xi_j| \le A |z_{*} - \xi_{j_t}|$ for every $t+1 \le j \le m$. Here $0\le t \le m$
where the $t=0$ case means $|z_{*} - \xi_j| \le A |z_{*} - \xi|$ for every $0\le j \le m$.

We split our root cluster ${\mathcal C}$ as
$$
{\mathcal C} \ = \ C_0 \cup C_1 \cup \cdots \cup C_t \ \ {\rm where} \ \ 
C_k \ = \ \{\xi_{j_k}, \ldots, \xi_{j_{k+1}-1}\} \cap {\mathcal C}
$$
for $0\le k \le t-1$ and $C_t = \{\xi_{j_t}, \ldots, \xi_m\} \cap {\mathcal C}$.
We set $f_k = \sum_{\xi_j \in C_k} e_{j}$ so that $S({\mathcal C}) = f_0 + \cdots + f_t$. 
Note that if $z_{*} = \xi$, we have $C_0 = \{\xi\}$ and so $f_0 = e_0$.

For each $0\le k \le t$, set
$$
F_k(z) \ = \ \prod_{j=j_k}^{j_{k+1}-1} (z - \xi_j)^{e_j} \ =: \ F_k^1(z) \, F_k^2(z)
$$
where
$$
F_k^1(z) \ = \ \prod_{\xi_j \in C_k} (z - \xi_j)^{e_{j}} \ \ {\rm and} \ \
F_k^2(z) \ = \ \prod_{\xi_j \notin C_k} (z - \xi_j)^{e_{j}} .
$$
Therefore $f'(z) = a \prod_{k=0}^t F_k(z)$ and if 
$Q_{{\mathcal C}} := | a \prod_{k=0}^t F_k^2(\xi)|$, then our goal is to prove
\begin{equation}\label{Q-aim}
Q_{{\mathcal C}}^{1/(S({\mathcal C}) +1)} \ \le \ C_d \, H_f(z_{*})
\end{equation}
which will establish \eqref{PS-H-complex}.

By the formula above for $f'$, we have
\begin{equation}\label{H-1}
H_f(z_{*}) \ \ge \
|f'(z_{*})| \ = \ |a \prod_{k=0}^t F_k^1(z_{*}) F_k^2(z_{*})| \ \gtrsim_d \ 
|z_{*} - \xi|^{S({\mathcal C})} Q_{\mathcal C}
\end{equation}
since for any root $\eta$ of $f'$, $|\xi - \eta| \le |z_{*} - \xi| + |z_{*}-\eta| \le 2 |z_{*}-\eta|$.

To derive other lower bounds for $H_f(z_{*})$ in terms of $Q_{\mathcal C}$,
we consider the derivatives $f^{(1+\rho_k)}$ of $f'$ where $\rho_k := \sigma_0 + \cdots + \sigma_k$
and $\sigma_k = \sum_{j=j_k}^{j_{k+1}-1} e_j$.
To do this, set 
$$
{\mathcal F}_k(z) \ = \ \prod_{\ell = k+1}^t F_{\ell}(z) 
$$ 
for each $0\le k \le t-1$ and note that
$$
f^{(1+ \rho_k)}(z)/\rho_k! \ = \ a {\mathcal F}_k(z) \ + \ a {\mathcal H}_k(z)
$$
where both ${\mathcal F}_k(z)$ and ${\mathcal H}_k(z)$ are homogeneous functions of degree $d-1 - \rho_k$
($d = {\rm deg}(f)$) in the variables $z-\eta$ as $\eta$ runs over the distinct roots of $f'$.
When $z_{*} = \xi$, we have $\rho_0 = \sigma_0 = e_{0}$ and so 
$f^{(1+e_{0})}(z)/e_{0}! = a {\mathcal F}_0(z)$ and ${\mathcal H}_0(z) = 0$.

When $z_{*} \not= \xi$,
each term in ${\mathcal H}_k(z_{*})$ has a factor $z_{*}-\xi_j$ for some $1\le j\le j_{k+1}-1$ and 
so $A |z_{*} - \xi_j| \le |z_{*} - \xi_{j_{k+1}}|$ which implies 
$|a {\mathcal H}_k(z_{*})| \le (1/2) |a {\mathcal F}_k(z_{*})|$ if $A$ is chosen large enough.
Therefore
$|f^{(1 + \rho_k)}(z_{*})/\rho_k!| \ge (1/2) |a {\mathcal F}_k(z_{*})|$.

Hence 
\begin{equation}\label{H-F}
H_f(z_{*})^{1+\rho_k} \ \gtrsim_d \ |f^{(1+\rho_k)}(z_{*})/\rho_k! | \ \gtrsim_d \ |a {\mathcal F}_k(z_{*})| 
\end{equation}
and if $f^k := \sigma_k - f_k$, then for all $0\le k \le t-1$,
$$
Q_{\mathcal C} \ \lesssim_d \ |z_{*} - \xi_{j_k}|^{f^0 + \cdots f^k} 
\frac{1}{|\prod_{\ell=k+1}^t F_{\ell}^1(z_{*})|} 
\ |a {\mathcal F}_k(z_{*})|
$$
$$
\lesssim_d \ |z_{*} - \xi_{j_k}|^{f^0 +\cdots + f^k} 
\frac{1}{|z_{*}- \xi_{j_{k+1}}|^{f_{k+1}+\cdots + f_t}} \
H_f(z_{*})^{1+\rho_k}.
$$
The first inequality follows from the fact that $|\xi - \xi_j| \le 2 |z_{*} - \xi_j|$ 
for every $j\ge 0$ and $|z_{*} - \xi_j| \le A |z_{*} - \xi_{j_k}|$ for every $j_k \le j \le j_{k+1}-1$.
The second inequality follows from \eqref{H-F}.
If $z_{*} = \xi$, then $f^0 = 0$ and we interpret $|z_{*}-\xi|^{f^0} = 1$
in the $k=0$ case.

Therefore since $|z_{*} - \xi_{j_k}| \le |z_{*} - \xi_{j_{k+1}}|$, we apply the above inequality for $Q_{\mathcal C}$
for $k$ and $k+1$ to conclude that
\begin{equation}\label{Q-k}
Q_{\mathcal C} \ \lesssim_d \ B_{k+1}^{\rho_k - S({\mathcal C})} H_f(z_{*})^{1+\rho_k} \ \ {\rm and} \ \ 
Q_{\mathcal C} \ \lesssim_d \ B_{k+1}^{\rho_{k+1} - S({\mathcal C})} H_f(z_{*})^{1+\rho_{k+1}}
\end{equation}
where $B_{k+1} = |z_{*} - \xi_{j_{k+1}}|$.
The first inequality with $k=-1$ incorporates \eqref{H-1} if we interpret $\rho_{-1} = 0$.

We now divide the analysis into cases depending on the size $S({\mathcal C})$ of ${\mathcal C}$.
Suppose $\rho_k < S({\mathcal C}) \le \rho_{k+1}$ for some
$-1\le k \le t-1$. Again with the interpretation that $\rho_{-1} = 0$, we see that any cluster
of roots ${\mathcal C}$ must have a size lying in one of these intervals. With $\rho_k < S({\mathcal C}) \le 
\rho_{k+1}$, we see that the first inequality in \eqref{Q-k} implies
$$
B_{k+1}^{S({\mathcal C})- \rho_k} \, Q_{\mathcal C} \ \lesssim_d \ H_f(z_{*})^{1+\rho_k} 
$$
and this implies \eqref{Q-aim} when $Q_{\mathcal C}^{-1/(S({\mathcal C}) +1)} \le B_{k+1}$
and therefore we may assume
\begin{equation}\label{assumption-1}
B_{k+1} \ \le \ Q_{\mathcal C}^{-1/(S({\mathcal C}) + 1)}.
\end{equation}
When $z_{*} = \xi$, the reduction to \eqref{assumption-1} when $k=-1$ is automatic.

The second inequality in \eqref{Q-k}, together with \eqref{assumption-1}, implies
$$
Q_{\mathcal C} \ \le \  B_{k+1}^{\rho_{k+1} - S({\mathcal C})} H_f(z_{*})^{1+\rho_{k+1}} \ \le \ 
Q_{\mathcal C}^{-(\rho_{k+1} - S({\mathcal C}))/(S({\mathcal C}) + 1)} \, H_f(z_{*})^{1+\rho_{k+1}}
$$
and this unravels to \eqref{Q-aim}, completing the proof of \eqref{PS-H-complex}.

\end{proof}

\section{The fourier transform of measures in ${\mathbb C}^d$}\label{Fourier-transform}

Recall that $t\to e^{2\pi i t}$ gives the basic character on ${\mathbb R}$. 
All other characters on ${\mathbb R}$ arise
from elements $s\in {\mathbb R}$,  $\chi_s(t) = e^{2\pi i s t}$. In the same way, 
starting with the basic character ${\rm e}(z) = e^{2\pi i x} e^{2\pi i y}$ where $z = x + i y$, the other
characters on
${\mathbb C} \simeq {\mathbb R}^2$
arise from elements $w = (u,v) \in {\mathbb R}^2$,
${\rm e}_w (z) = e^{2\pi i (x u + y v)}$. 


The fourier transform ${\mathcal F}(\sigma) = {\widehat{\sigma}}$ of a Borel measure $\sigma$ on ${\mathbb C}$ is defined as
$$
{\widehat{\sigma}}(\xi,\eta) \ = \ \int_{{\mathbb R}^2} e^{2\pi i [(\xi,\eta)\cdot (x,y)]} \, d\sigma(x,y) 
$$
where $(x,y)\cdot (\xi, \eta) = x \xi + y \eta$. 
Now we write this in complex notation using the transformation $T: {\mathbb C} \to {\mathbb R}^2$ defined
by $T(w) = (u+v, u - v)$ where $w = u + i v$. Note that if $z = x + iy$, then $z w = (xu - yv) + i (xv + yu)$ and so
${\rm Re}(zw) + {\rm Im}(zw) = x(u+v) + y(u - v) = (x,y)\cdot T w$. Hence, using the basic nonprincipal character $e$ on ${\mathbb C}$,
$$
{\widehat{\sigma}}(T w) \ = \ \int_{{\mathbb R}^2} e^{2\pi i ({\rm Re}(wz) + {\rm Im}(wz))} \, d\sigma(x,y)  \ = \ 
\int_{\mathbb C} e(wz) \, d\sigma(z) .
$$
It will be convenient for us to think of ${\mathcal F}\circ T$ as the more appropriate notion of the fourier transform.

This discussion readily extends to ${\mathbb C}^d \simeq {\mathbb R}^{2d}$. A complex vector 
${\underline{\xi}} = (\xi_1, \ldots, \xi_d) \in {\mathbb C}^d$ can be viewed a real vector in ${\mathbb R}^{2d}$
by writing out the real and imaginary parts of each $\xi_j = \gamma_j + i \eta_j$ so that
${\underline{\xi}}_r = (\gamma_1, \eta_1, \ldots, \gamma_d, \eta_d) \in {\mathbb R}^{2d}$. If
$\sigma$ now denotes a Borel measure on ${\mathbb C}^d$, then for ${\underline{z}} = (z_1, \ldots, z_d) \in {\mathbb C}^d$
(or ${\underline{z}}_r = (x_1, y_1, \ldots, x_d, y_d) \in {\mathbb R}^{2d}$),
$$
{\widehat{\sigma}}({\underline{\xi}}) \ = \ \int_{{\mathbb R}^{2d}} e^{2\pi i [{\underline{\xi}}_r \cdot {\underline{z}}_r]} \, 
d\sigma({\underline{x}},{\underline{y}})
$$
and this again can be written in complex notation (using complex multiplication). For 
${\underline{z}}, {\underline{w}} \in {\mathbb C}^d$,
we write $\langle {\underline{z}}, {\underline{w}} \rangle = \sum_{j=1}^d z_j w_j$ which is a slight variant of the usual
hermitian inner product on ${\mathbb C}^d$ (the form being symmetric instead of being skew-symmetric). 
Extend the transformation $T$ above to ${\underline{T}} : {\mathbb C}^d \to {\mathbb R}^{2d}$
by defining
$$
{\underline{T}} ({\underline{w}}) \  = \ (T w_1, \ldots, Tw_d) \in {\mathbb R}^2 \times \cdots \times {\mathbb R}^2 
\ \simeq \ {\mathbb R}^{2d}.
$$
We have
$$
{\widehat{\sigma}}({\underline{T}} \, {\underline{w}}) \ = \ \int_{{\mathbb R}^{2d}} 
e^{2\pi i (\sum_{j=1}^d [{\rm Re}(w_j z_j) + {\rm Im}(w_j z_j)])} \, d\sigma({\underline{x}},{\underline{y}})  \ = \ 
\int_{{\mathbb C}^d} e(\langle w, z\rangle ) \, d\sigma({\underline{z}}) 
$$
and again, it will be more convenient to think of ${\widehat{\sigma}} \circ {\underline{T}}$ as the Fourier transform of 
$\sigma$.

An important class of Borel measures $\sigma$ which arise in euclidean harmonic analysis
is surface measure on some polynomially parameterised $n$-dimensional surface 
${\underline{x}} \in {\mathbb R}^n \to \Phi({\underline{x}}) \in {\mathbb R}^N$ where 
$\Phi({\underline{x}}) = (Q_1({\underline{x}}), \ldots, Q_N({\underline{x}}))$
is an $N$-tuple of polynomials $Q_j \in {\mathbb R}[X_1, \ldots, X_n]$.

One could consider problems over ${\mathbb R}$ and examine analogues over ${\mathbb C}$. For instance,
the fourier restriction problem with respect to the moment curve $\gamma_r: {\mathbb R} \to {\mathbb R}^d$
defined by $\gamma_r(t) = (t, t^2, \ldots, t^d)$ was 
solved (with a remarkable argument) by Drury in the 1980s \cite{Drury}. One could try to see if arguments for the fourier restriction
problem with respect to $\gamma_r$ extend to establishing fourier restriction estimates for the complex moment
curve $\gamma_c : {\mathbb C} \to {\mathbb C}^d$ defined by $\gamma_c(z) = (z, z^2, \ldots, z^d)$, considered as a $2$-surface 
in ${\mathbb R}^{2d} \simeq {\mathbb C}^d$. If $z = x + iy$, this $2$-surface is parameterised by 
$\Gamma(x,y) = (\phi_1(x,y), \psi_1(x,y), \ldots, \phi_d(x,y), \psi_d(x,y))$ where
$$
\phi_1(x,y) = x, \psi_1(x,y) = y, \phi_2(x,y) = x^2 - y^2, \psi_2(x,y) = 2 xy, \phi_3(x,y) = x^3 - 3xy^2, ...
$$
So this is a problem in {\it real} harmonic analysis.

The problem of fourier restriction to complex curves has been studied by a number of authors; see for example,
\cite{Bak-Ham}, \cite{Chung} and more recently \cite{Dios} and \cite{Conor} where positive results have
been obtained. The question arises whether the results are sharp.

In the real case, it is easy to determine by a scaling
argument what is the necessary relationship between the Lebesgue exponents $p$ and $q$ when the restriction operator 
${\mathcal R} : L^p \to L^q$
to the curve $\gamma_r$ is bounded. To determine the necessary range in $p$ where there is some restriction estimate, one usually
looks at the extension operator (the dual operator to ${\mathcal R}$)
$$
E b({\underline{x}}) \ := \ \int_0^1 e^{2\pi i [{\underline{x}} \cdot \gamma_r(t)]} b(t) \, dt 
$$
(so that a bound ${\mathcal R}: L^p \to L^q$ is equivalent to a bound $E : L^{q'} \to L^{p'}$) and applies it to
$b \equiv 1$ which is in every $L^{q'}$. Hence we want to determine the range of $p'$ such that
$E 1 ({\underline{x}})$ is in $L^{p'}$. 

Note that 
$$
E1({\underline{x}}) \ = \ {\widehat{\sigma}}({\underline{x}}) \ = \ \int_0^1 e^{2\pi i P_{\underline{x}}(t)} \, dt
$$
where $\sigma$ is (more or less) arclength measure on a compact piece of $\gamma_r$ and the phase 
$P(t) = P_{\underline{x}}(t) = x_1 t + x_2 t^2 + \cdots + x_d t^d$  is a real polynomial whose coefficients
give us a function in ${\underline{x}} \in {\mathbb R}^d$. The bound 
$| {\widehat{\sigma}}({\underline{x}})| \le C_d \|{\underline{x}}\|^{-1/d}$ follows from the classical van der
Corput estimates and the exponent $1/d$ is 
best possible {\it if} we measure the decay in terms of the isotropic norm $\|\cdot\|$ on ${\mathbb R}^d$.
This estimate alone does not give us the correct $L^{p'}$ range of integrability. In fact the precise range in $p'$
where $ {\widehat{\sigma}} \in L^{p'}$ is not a straightforward problem to resolve.

It turns out that the Lebesgue norm $\| {\widehat{\sigma}}\|_{L^{p'}({\mathbb R}^d)}$ arises as a constant 
(the singular integral) in
an asymptotic formula for the number of solutions to a system of diophantine equations. Determining a precise
count for various systems of diophantine equations became known as Tarry's problem which have been studied
since the 1920s. Ever since then, number theorists have been interested in finding the exact range of $L^{p'}$ integrability
for ${\widehat{\sigma}}$ with some partial progress given by Vinogradov and Hua in the 1930s. In the late 1970s,
Arkhipov, Chubarikov and Karatsuba \cite{ACK-1} (see also \cite{ACK} and \cite{ACK-2}) solved this problem with an elegant argument using a very clever application
of van der Corput estimates for oscillatory integrals. As a consequence we now know
that Drury's result is sharp.

So one might ask whether the argument can extend to the complex case. For the complex moment curve, the
extension operator applied to $b \equiv 1$ can be written as
$E 1 ({\underline{\xi}}_r)  =$
$$ 
 \int_{{\mathbb R}^2} e^{2\pi i [{\underline{\xi}}_r \cdot \Gamma(x,y)]} 
\phi(x,y) \, dx dy = 
\int_{{\mathbb R}^{2}} e^{2\pi i [ \sum_{j=1}^d \xi_j \phi_j (x,y) + \sum_{j=1}^d \eta_j \psi_j(x,y)]}
\phi(x,y) \, dx dy
$$
where (as above) the complex vector
${\underline{\xi}} = (\xi_1, \ldots, \xi_n) \in {\mathbb C}^d$ with $\xi_j = \gamma_j + i \eta_j$ can be viewed a real vector 
${\underline{\xi}}_r = (\gamma_1, \eta_1, \ldots, \gamma_d, \eta_d)$ in ${\mathbb R}^{2d}$. Here 
$\phi \in C^{\infty}_c({B})$ where ${B}$ is
the unit ball in ${\mathbb R}^2$.

Using the notation developed above, we have
$$
E1( {\underline{T}} \, {\underline{w}}) \ = \  \int_{{\mathbb C}} e(\langle {\underline{w}}, \gamma_c(z) \rangle) 
\phi(z) \, dz
$$
where $\phi \in C^{\infty}_c({\mathbb D})$. The
problem of determining the $L^{p'}$ integrability of $E1$ is the same as the one for $E1 \circ {\underline{T}}$. Note that
the phase $\langle {\underline{w}}, \gamma_c(z) \rangle = \sum_{j=1}^d w_j z^j$ is a general complex polynomial of degree $d$.
Hence we arrive at studying the two dimensional oscillatory integral
$$
I({\underline{w}}) \ := \ \int_{{\mathbb C}} e( P_{\underline{w}} (z)) \phi(z) \, dz
$$
where $P_{\underline{w}} (z) = w_1 z + w_2 z^2 + \cdots + w_d z^d$ is a complex polynomial.

Also relevant for Tarry's problem as well as for the fourier restriction problem is the case of
sparse polynomials $P^{\rm sparse}(z) = w_1 z^{k_1} + \cdots w_d z^{k_d}$
where $k_1 < k_2 < \cdots < k_d$ when $K := k_1 + \cdots + k_d < (0.5) k_d(k_d +1)$. The interest
is in determining when 
$$
I_{\rm sparse}({\underline{w}}) \ := \ \int_{{\mathbb C}} e( P^{\rm sparse}_{\underline{w}} (z)) \phi(z) \, dz
$$
belongs to the space  $L^q({\mathbb C}^d)$.

Proposition \ref{ACK-intro} determines the $L^q$ integrability of $I({\underline{w}})$ and $I_{\rm sparse}({\underline{w}})$.  

\section{Proposition \ref{ACK-intro} -- some preliminaries}\label{complex-ack}

For ${\underline{w}} = (w_1, \ldots, w_d) \in {\mathbb C}^d$, let
$$
P(z) \ = \ P_{{\underline{w}}}(z) \ = \ w_d z^d + w_{d-1} z^{d-1} + \cdots + w_1 z
$$ 
be a complex polynomial of degree $d$ with coefficients ${\underline{w}}$.
Consider the oscillatory integral
$$
I({\underline{w}}) \ = \ \int_{\mathbb C} e(P_{{\underline{w}}}(z)) \, \phi(z) \, dz
$$
where $\phi \in C^{\infty}_c ({\mathbb D})$ satisfies $\phi(z) \equiv 1$ in ${\mathbb D}_{1/2}$.
For this particular oscillatory integral, we have changed notation from $I_{\phi}(P_{{\underline{w}}})$ to $I({\underline{w}})$ to highlight that we want to view
this oscillatory integral as a function of the coefficients of $P_{{\underline{w}}}(z)$. Similarly
we define $I_{\rm sparse}({\underline{w}})$ with respect to a polynomials with sparse powers as in the previous section.

Recall the statement of Proposition \ref{ACK-intro}: we have
\begin{equation}\label{finite}
{\mathcal I} \ := \ \int_{{\mathbb C}^d} |I({\underline{w}})|^q \, d{\underline{w}} \ < \ \infty
\end{equation}
if and only if $q > 0.05(d^2 + d) + 1$. Furthermore, $\|I_{\rm sparse}\|_{L^q} < \infty$ if and only if $q > K$.

We follow
the argument in \cite{ACK}, complexifying two key results. As in the real case, \eqref{finite}
relies on van der Corput bounds for oscillatory integrals as in Theorem \ref{H-main-osc} when $n=1$, together with a sublevel set bound for
$$
H({\underline{w}}) \ := \ \inf_{z\in {\rm supp}(\phi)} \max\bigl(|P'(z)|, |P''(z)/2|^{1/2}, \ldots, |P^{(d)}(z)/d!|^{1/d}).
$$
This is the case whether we are examining the $L^q$ norm of $I$ or $I_{\rm sparse}$. Here we will
only give the details establishing \eqref{finite} since the argument for $I_{\rm sparse}$ runs in the same
way as in the real case.

For convenience we restate the refined version of Theorem \ref{H-main-osc} in the case $n=1$ as
detailed in Section \ref{Thm 5.1}; see \eqref{poly-several-int} and \eqref{phi-dependence}.

\begin{proposition}\label{vc-complex} For any polynomial $Q \in {\mathbb C}[X]$ of degree $d$,
we have
$$
I_{\phi}(Q)| \ \le \ C_{d,\phi} \, H_Q^{-2}
$$
where 
\begin{equation}\label{phi-dependence-again-again}
C_{d,\phi} \ \le \ C_{d,k} \, [1 + J^{-k} \|\phi\|_{C^k}]
\end{equation}
for any large $k$.
In particular, for $I({\underline{w}})$ appearing in \eqref{finite}, we have
\begin{equation}\label{osc-int}
|I({\underline{w}})| \ \le \ C_{d,\phi} \, H({\underline{w}})^{-2} .
\end{equation}
\end{proposition}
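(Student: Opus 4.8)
The plan is to obtain Proposition \ref{vc-complex} as the one-variable case of Theorem \ref{H-main-osc}, reading off the refined dependence on $\phi$ from the proof given in Section \ref{Thm 5.1}. First I would note that the bound $|I_{\phi}(Q)| \le C_{d,\phi} H_Q^{-2}$ is exactly \eqref{poly-several-int} with $n=1$, provided one checks that the various normalizations of the $H$-functional are interchangeable: for a polynomial of degree $\le d$ one has $(k!)^{1/k} \le d$ for $1 \le k \le d$, so $\inf_z \max_{1\le k \le d}|Q^{(k)}(z)|^{1/k}$ and $\inf_z \max_{1\le k \le d}|Q^{(k)}(z)/k!|^{1/k}$ agree up to a factor depending only on $d$. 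Thus the factorial-free $H_P$ used in Section \ref{Thm 5.1} and the functional $H(\underline{w})$ with factorials appearing in Section \ref{complex-ack} may be used interchangeably at the cost of a constant $C_d$.

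The only genuine point is the explicit bound on $C_{d,\phi}$ asserted in the proposition, and for this I would revisit the structure of the proof of \eqref{poly-several-int}. Recall that one takes a Vitali cover of $\mathrm{supp}(\phi)$ at scale $\epsilon\, r(z_j) = \epsilon/J(z_j)$ (using Lemma \ref{J-constant} so that $J$, hence $r$, is essentially constant on each ball), writes $I = \sum_{j\in A} I_j + \sum_{j\in B} I_j$, and estimates the degenerate part $\sum_{j\in A} I_j$ by the measure of the sublevel set ${\mathfrak S} = \{z \in \mathrm{supp}(\phi): |\nabla P(z)| \le 4 J(z)\}$ via Proposition \ref{poly-sublevel-several}; since this uses only that $\sum_{j\in A}\varphi_j$ is uniformly bounded, the outcome $\lesssim_d H_P^{-2}$ is independent of $\phi$ beyond its support, accounting for the $1$ in $[1 + J^{-k}\|\phi\|_{C^k}]$. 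For the non-degenerate part, after rescaling $z = z_j + r(z_j) w$ one has $|\nabla Q| \gtrsim 1$ and $\|Q\|_{C^d} \lesssim 1$ on the support of the localized cut-off, so $N$-fold integration by parts gives $|I_j| \lesssim_N r(z_j)^2 [1 + J^{-N}\|\phi\|_{C^N}]\Lambda_j^{-N}$ as in \eqref{phi-dependence-again}; summing over $j \in B$ using the bounded overlap of Lemma \ref{bounded-overlap} together with the geometric series in the dyadic parameters, exactly as in Section \ref{Thm 5.1}, produces $\sum_{j\in B}|I_j| \lesssim_d [1 + J^{-N}\|\phi\|_{C^N}] H_P^{-2}$. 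Combining the two parts and choosing $k=N$ (any integer $\ge 4$ works) yields the stated bound \eqref{phi-dependence-again-again}.

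Finally, the ``in particular'' estimate \eqref{osc-int} is the specialization to $Q = P_{\underline{w}}(z) = w_1 z + \cdots + w_d z^d$ with the fixed bump function $\phi \in C^{\infty}_c({\mathbb D})$, upon identifying $H_{P_{\underline{w}}}$ with $H(\underline{w})$ up to the dimensional constant noted above and absorbing the fixed $\phi$ into $C_{d,\phi}$. I do not expect any real obstacle here: all the substance lies in Theorem \ref{H-main-osc} and its proof, and the only care required is the bookkeeping of the $\phi$-dependence through the integration-by-parts step for the $B$-sum, which is precisely what the remark surrounding \eqref{phi-dependence-again} in Section \ref{Thm 5.1} was designed to record.
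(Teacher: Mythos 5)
Your proposal is correct and is essentially the paper's own approach: the paper itself presents Proposition \ref{vc-complex} simply as a restatement of the $n=1$ case of Theorem \ref{H-main-osc} together with the refined constant tracked in the Remark \eqref{phi-dependence}--\eqref{phi-dependence-again}. Your two added checks — the $(k!)^{1/k}\le d$ comparison reconciling the factorial and factorial-free normalizations of $H$, and the bookkeeping of the $A$-sum vs.\ $B$-sum $\phi$-dependence — are exactly the right details to make the reduction explicit.
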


The sublevel set bound for $H({\underline{w}})$ is the following. 

\begin{proposition}\label{H-sublevel} For any $Q\gg 1$, we have
\begin{equation}\label{H-sublevel-bound}
\bigl| \{ {\underline{w}} \in {\mathbb C}^d : H({\underline{w}}) \le Q \} \bigr| \ \lesssim_d \ Q^{2[0.5(d^2 + d) + 1]}.
\end{equation}
\end{proposition}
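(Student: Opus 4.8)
### Proof plan for Proposition 16.3

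\textbf{Overview.} The plan is to estimate the measure of the sublevel set $\{{\underline{w}} \in {\mathbb C}^d : H({\underline{w}}) \le Q\}$ by slicing according to which monomial coefficient $w_j$ is "responsible" for the smallness of $H$ at a given point. The key structural fact is that $H({\underline{w}})$ being small forces \emph{all} the derivatives $P', P'', \ldots, P^{(d)}$ to be small at \emph{some} common point $z \in {\rm supp}(\phi)$ (in particular on ${\mathbb D}_{1/2}$), and via a Vandermonde/interpolation argument this constrains the coefficients $w_1, \ldots, w_d$ to lie in a box of controlled size. Since $P^{(k)}(z)/k! = \sum_{j\ge k}\binom{j}{k} w_j z^{j-k}$, knowing $|P^{(k)}(z)| \le C_d Q^k$ for $k=1,\ldots,d$ at a single point $z$ is essentially a triangular linear system in the $w_j$: the top equation $k=d$ gives $|w_d| \lesssim Q^d$, then back-substitution gives $|w_{d-1}|\lesssim Q^{d-1}$ (up to the fixed point $z \in {\mathbb D}$, whose powers are bounded), and so on down to $|w_1| \lesssim Q$.

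\textbf{Main steps.} First I would fix $Q \gg 1$ and a point ${\underline{w}}$ in the sublevel set, and choose $z = z({\underline{w}}) \in {\rm supp}(\phi)$ with $\max_{1\le k\le d}|P^{(k)}(z)/k!|^{1/k} \le 2Q$, i.e. $|P^{(k)}(z)| \le C_d Q^k$ for all $k$. Second, I would run the back-substitution: from $P^{(d)}(z) = d!\, w_d$ get $|w_d| \le C_d Q^d$; from $P^{(d-1)}(z)/(d-1)! = w_{d-1} + d\, w_d z$ and $|z|\le 1$ get $|w_{d-1}| \le C_d Q^{d-1} + C_d|w_d| \le C_d' Q^d$ --- so a priori the bound at level $j$ degrades to $Q^d$; this is too lossy. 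To fix this, I would instead \emph{also} use that $z$ ranges over the fixed bounded region ${\mathbb D}$ and partition ${\mathbb D}$ into $O(Q^0)$-many... no: better, I would cover ${\mathbb D}$ by $\sim Q^{2d}$-independent structure is wrong too. The correct fix is to \emph{integrate over $z$}: for each $z \in {\mathbb D}$, the set of ${\underline{w}}$ with $|P^{(k)}(z)/k!| \le C_d Q^k$ for all $k$ is, by the triangular structure, contained in a polydisc $\prod_{j=1}^d \{|w_j - a_j(z,w_{>j})| \le C_d Q^j\}$, which has $2d$-dimensional Lebesgue measure $\lesssim_d \prod_{j=1}^d Q^{2j} = Q^{2(1+2+\cdots+d)} = Q^{2 \cdot 0.5 d(d+1)}$. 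Since this box is the same (up to translation) for every $z$, and the sublevel set is the union over $z \in {\mathbb D}$ of these boxes, I would bound the union by covering ${\mathbb D}$ by $\sim Q^2$ discs of radius $Q^{-1}$ and noting that on each such disc the centers $a_j(z,\cdot)$ move by $O(Q^{j-1}\cdot Q^{-1}\cdot Q^{?})$...

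\textbf{Cleaner route.} Rather than fight with the union, I would change variables. Let $z_0$ be the chosen point; write $\widetilde{w}_k := P^{(k)}(z_0)/k!$ for $k=1,\dots,d$ and $\widetilde w_0 := P(z_0)$. The map ${\underline{w}} \mapsto (\widetilde w_0, \widetilde w_1, \ldots, \widetilde w_d)$ is, for \emph{fixed} $z_0$, a triangular \emph{linear} bijection of ${\mathbb C}^{d+1}\to{\mathbb C}^{d+1}$ with unit determinant (the diagonal entries are $1$ after dividing by $k!$; more precisely the Taylor-coefficient map has Jacobian $\pm 1$ since it is unipotent-triangular in the $w_j$). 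Then $H({\underline{w}}) \le Q$ means $\max_{1\le k \le d}|\widetilde w_k|^{1/k} \le 2Q$ \emph{for some} $z_0 \in {\rm supp}(\phi) \subseteq {\mathbb D}$. Fixing $z_0$, the set of such $(\widetilde w_1, \ldots, \widetilde w_d)$ is the polydisc of volume $\prod_{k=1}^d (C_d Q^k)^2 = C_d Q^{d(d+1)}$, and $\widetilde w_0$ is free but irrelevant (it does not appear in $H$ and ${\underline{w}}$ has only $d$ complex components $w_1,\dots,w_d$; there is no $w_0$). So for each fixed $z_0$ the slice has measure $\lesssim_d Q^{d(d+1)} = Q^{2[0.5(d^2+d)]}$. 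To pass from "for some $z_0$" to a uniform bound, I use that a \emph{single} $z_0 \in {\mathbb D}$ with, say, $|P^{(1)}(z_0)| \le 2Q$ constrains one more linear functional of ${\underline{w}}$: specifically I would argue that if $H({\underline{w}}) \le Q$ then picking the witnessing $z_0$ and noting $P'(z_0) = \sum_{j=1}^d j w_j z_0^{j-1}$ is determined up to the box, the map $z_0 \mapsto$ (the relevant data) is algebraic of bounded degree, so the union over $z_0$ is covered by $\lesssim_d Q^2$ translates of the box (choosing an $Q^{-1}$-net of ${\mathbb D}$, since moving $z_0$ by $Q^{-1}$ moves each $\widetilde w_k$ within a comparable multiple of its own box by $|{\underline{w}}|\cdot Q^{-1} \lesssim Q^{d-1}\cdot Q^{-1} \cdot (\text{bounded})$, which is $\le$ the box radius $Q^k$ only if $k \ge d-1$... hmm).

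\textbf{What I expect the real argument to be, and the obstacle.} The honest statement is that the extra "$+1$" in the exponent $0.5(d^2+d)+1$ comes precisely from this union-over-$z_0$ step costing one factor of $Q^2$ (one complex dimension's worth of freedom for the base point), exactly as in the real case of \cite{ACK} where the analogous sublevel bound carries an extra $Q$. So the clean plan is: (i) for fixed $z_0 \in {\mathbb D}$, the triangular structure gives $|\{{\underline{w}}: |P^{(k)}(z_0)| \le C_d Q^k,\ 1\le k\le d\}| \lesssim_d Q^{d(d+1)}$; (ii) cover ${\mathbb D}$ by $O(Q^2)$ discs $D_\nu$ of radius $c Q^{-1}$; (iii) show that if $H({\underline{w}}) \le Q$ is witnessed at $z_0 \in D_\nu$, then ${\underline{w}}$ lies in a fixed $O_d(1)$-dilate of the box associated to the center of $D_\nu$ --- this uses that on $D_\nu$ the derivatives $P^{(k)}$ vary by at most $\sup|P^{(k+1)}| \cdot cQ^{-1} \lesssim Q^{k+1}\cdot Q^{-1} = Q^k$, keeping us in a comparable box; (iv) sum: $|\{H \le Q\}| \le \sum_\nu |\text{box}_\nu| \lesssim Q^2 \cdot Q^{d(d+1)} = Q^{d^2+d+2} = Q^{2[0.5(d^2+d)+1]}$, as desired. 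The main obstacle is step (iii): controlling the variation of $P^{(k)}$ over the small disc uniformly in ${\underline{w}}$ --- one needs the a priori bound $|{\underline{w}}| \lesssim_d Q^d$ on the box first (from step (i) applied crudely) before the Cauchy-estimate / mean-value bound $|P^{(k+1)}| \lesssim Q^{k+1}$ on ${\mathbb D}_2$ can be invoked; this is a bootstrapping point that must be set up carefully, and matching the exponent exactly (no extra logarithms, sharp constant in the net) is where the calculation has to be done honestly.
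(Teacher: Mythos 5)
Your final plan (i)--(iv) is exactly the paper's proof: cover ${\mathbb D}$ by the $\sim Q^2$ lattice points $z_{r,s}=(r+is)/Q$, show that the sublevel set is contained in the union of the sets $S_Q^{r,s}=\{{\underline{w}}: |P^{(k)}(z_{r,s})|\le k!\,c_k Q^k,\ 1\le k\le d\}$, compute $|S_Q^{r,s}|=C_dQ^{2[0.5(d^2+d)]}$ via the unipotent triangular change of variables $y_k=P^{(k)}(z_{r,s})/k!$, and sum. The one point you flag as an obstacle --- controlling the variation of $P^{(k)}$ as the base point moves by $O(Q^{-1})$, which you worry requires an a priori bound on $|{\underline{w}}|$ or a sup of $|P^{(k+1)}|$ over a disc --- dissolves if you orient the Taylor expansion the right way: expand $P^{(k)}$ about the \emph{witness} point $z$ (where the hypothesis $H({\underline{w}})\le Q$ hands you $|P^{(k+\ell)}(z)|\lesssim Q^{k+\ell}$ for every $\ell$) and evaluate at the nearby lattice point $z_{r,s}$. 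The expansion is a finite sum, $P^{(k)}(z_{r,s})=\sum_{\ell=0}^{d-k}\frac{1}{\ell!}P^{(k+\ell)}(z)(z_{r,s}-z)^{\ell}$, and each term is $\lesssim Q^{k+\ell}\cdot Q^{-\ell}=Q^{k}$; no bootstrapping, Cauchy estimates, or a priori coefficient bounds are needed.
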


The proof of Proposition \ref{H-sublevel} is a straightforward complex extension of the real version due to 
Arkhipov, Chuburakov and Karatsuba, see \cite{ACK}.

\section{Proof of Proposition \ref{ACK-intro} - the sufficiency}\label{sufficiency}

We decompose 
$$
{\mathcal I} \ = \ 
\int_{\{ H({\underline{w}}) \le 1\}} |I({\underline{w}})|^q \, d{\underline{w}} \ + \ \sum_{r\ge 0} 
\int_{\{2^r < H({\underline{w}}) \le 2^{r+1}\}} |I({\underline{w}})|^q \, d{\underline{w}} .
$$
Of course we have the trivial bound $|I({\underline{w}})| \lesssim 1$ and this shows that the first term is at
most $|\{ H({\underline{w}}) \le 1\}|$ which in turn is finite by \eqref{H-sublevel-bound} in Proposition \ref{H-sublevel}. 
Let us denote the $r$th term in the sum defining the second term by ${\mathcal I}_r$; that is,
$$
{\mathcal I}_r  \ = \ \int_{\{2^r < H({\underline{w}}) \le 2^{r+1}\}} |I({\underline{w}})|^q \, d{\underline{w}}  \ =: \ \int_{E_r} |I({\underline{w}})|^q \,
d{\underline{w}}.
$$

Proposition \ref{vc-complex} implies that for ${\underline{w}} \in E_r$, $|I({\underline{w}})| \lesssim_d 2^{-2r}$ 
and so ${\mathcal I}_r \lesssim_d 2^{-2r q} |E_r|$.
By Proposition \ref{H-sublevel}, we have
$|E_r| \lesssim_d 2^{2r[0.5(d^2 + d) + 1]}$ and so
$$
\sum_{r\ge 0} \int_{E_r} |I({\underline{w}})|^q \, d{\underline{w}} \ \lesssim_d \ \sum_{r\ge 0} 2^{-2r [ q - 0.5 (d^2 +d) -1]}
$$
which is finite when $q > q_d$. This completes the proof of the sufficiency part of Proposition \ref{ACK-intro}.

\section{Proof of Proposition \ref{H-sublevel}}\label{H-proof}

Let 
$$
S_Q \ := \ \bigl\{ {\underline{w}} \in {\mathbb C}^d :  H({\underline{w}}) \, \le \, Q \bigr\}
$$
so that \eqref{H-sublevel-bound} says $|S_Q| \lesssim_d Q^{2[0.5(d^2 + d)+1]}$.

For $(r,s) \in {\mathbb Z}^2$ satsifying $-Q/2 \le r, s \le Q/2$, let $z_{r,s} = r/Q + i r/Q$ and define
$$
S_{Q}^{r,s} \ := \ \bigl\{ {\underline{w}} \in {\mathbb C}^d : |P_{{\underline{w}}}^{(k)}(z_{r,s}) | \le k! c_k  \,  Q^k, \ 1\le k \le d \, \bigr\}
$$
for some appropriate large constants $c_k$.
Our basic claim is
\begin{equation}\label{basic-claim}
S_Q \ \subseteq \ \bigcup_{-Q/2 \le r,s \le Q/2} S_Q^{r,s} .
\end{equation}

We write
$$
|S_Q^{r,s}| \ = \ \int_{S_Q^{r,s}} \, d {\underline{w}}
$$
and we will compute the integral defining $|S_Q^{r,s}|$ by making a certain change of variables.
Note that
$$
P^{(k)}(z)/k! \ = \  w_k \ + \ g_{k+1} w_{k+1} z \ + \ \cdots \ + \ g_{d+1} w_d z^{d-k}
$$
for some combinatorial numbers $g_j$. We make the linear change of variables ${\underline{y}} = T {\underline{w}}$
where for each $1\le k \le d$,
$$
y_k \ := \ w_k \ + \ g_{k+1} w_{k+1} z_{r,s} \ + \ \cdots \ + \ g_{d+1} w_d z_{r,s}^{d-k}.
$$
Note that ${\rm det}T = 1$ and $|y_k| \le c_k Q^k$ for ${\underline{w}} \in S_Q^{r,s}$. Hence
$$
|S_Q^{r,s}| \ = \ \int_{|y_1| \le c_1 Q} \cdots \int_{|y_d| \le c_d Q^d} \, d{\underline{y}} \ = \ C_d \, Q^{2[0.5(d^2 + d)]}
$$
and so the claim above implies $|S_Q| \lesssim_d Q^2 Q^{2[0.5(d^2 + d)]}$, completing the proof of Proposition \ref{H-sublevel}.

To prove the claim, fix ${\underline{w}} \in S_Q$ so that $H({\underline{w}}) \le Q$. By the definition of $H$, we see that there is
a $z = u + iv \in {\mathbb D}$ such that $|P^{(k)}(z)| \le Q^k$ for all $1\le k \le d$. Let $r := \lfloor u Q\rfloor$,
$s := \lfloor v Q \rfloor$ and set $w = z_{r,s} - z$. Note that $w = (r/Q - u) + i (s/Q - v)$ and so $|w| \le \sqrt{2} Q^{-1}$.

For $1\le k \le d$, we Taylor expand
$$
P^{(k)}(z_{r,s}) \ = \ P^{(k)}(z) + P^{(k+1)}(z) w + \frac{1}{2!} P^{(k+2)}(z) w^2 \ + \ \cdots \ + \ \frac{1}{(d-k)!} P^{(n)}(z) w^{d-k}
$$
so that
$$
|P^{(k)}(z_{r,s})| \ \lesssim \  \sum_{\ell=0}^{d-k} Q^{k + \ell} |w|^{\ell} \ \le \ k! c_k \, Q^k
$$
for appropriate constants $c_k$.
This implies that ${\underline{w}} \in S_Q^{r,s}$, completing the proof of the claim and hence the proof of the proposition.

\section{Proof of Proposition \ref{ACK-intro} - the necessity}\label{necessity}

Here we follow \cite{ACK} in the real case. We bound
$$
{\mathcal I} \ \ge \ \int_R |I({\underline{x}})|^q \, d{\underline{x}} 
$$
where $R \subset {\mathbb C}^d$ is a region where we will be able to estimate $I({\underline{x}})$ from below.
The region $R$ will be a disjoint union of subregions $R_m, m\ge m_0$ for some large integer $m_0 = m_0(d)$. To define $R_m$,
we fix a lacunary sequence $Q_m = A^m$ where $A = A_d > 2$ and define the planar sets
$$
E_m \ := \ \bigl\{z = x + iy \in {\mathbb C} : Q_m^d  \le x \le (2 Q_m)^d, \ Q_m^d \le y \le (2Q_m)^d \, \big\}.
$$
These are disjoint since $A > 2$. Writing ${\underline{x}} = ({\underline{x}'}, x_d) \in {\mathbb C}^d$, the sets
$R_m$ will be of the form
$$
R_m \ := \ \{{\underline{x}} \in {\mathbb C}^d : x_d \in E_m, \ \, {\underline{x}'} \in R'_m(x_d) \}
$$
where for each $x_d \in E_m$, the slice
$R'_m(x_d) \subset {\mathbb C}^{d-1}$ will be specified momentarily. Note that the sets $\{R_m\}$ are pairwise disjoint
whatever our choice for $R'_m(x_d)$. 

Set $w_0 = 0.25 (1+i)$ and define, for each $m \gg 1$, 
$$
{\mathcal P}_m \ := \ \bigl\{ (r,s) \in {\mathbb N}^2 : 1 \le r, s \le Q_m/10 \bigr\}.
$$
For each $(r,s) \in {\mathcal P}_m$, let $z_{r,s} = w_0 + (r/Q_m + i s/Q_m) \in {\mathbb D}_{1/2}$.
Note that if $(r,s) \not= (r'.s')$, then $|z_{r,s} - z_{r',s'}| \ge Q_m^{-1}$.
For each $x_d \in E_m$, we define
$$
R'_m(x_d) \ := \ \bigcup_{(r,s) \in {\mathcal P}_m} {R'}_m^{r,s}(x_d)
$$
where ${R'}_m^{r,s}(x_d) = T_{x_d}^{r,s} {\mathcal B}$ and
$$
{\mathcal B} \ := \ 
\bigl\{ {\underline{y}'} = (y_1, \ldots, y_{d-1}) \in {\mathbb C}^{d-1} : |y_j| \le (c_1 Q_m)^j, \, 1\le j\le d-1  \, \bigr\},
$$
$c_1$ a small constant depending on $d$,
and $T_{x_d}^{r,s}$ an affine transformation $T_{x_d}^{r,s} {\underline{y}'} = {\underline{x}'}$ defined by the relationship
$$
x_d z^d + \cdots + x_1 z \ = \ x_d (z - z_{r,s})^d + y_{d-1} (z - z_{r,s})^{d-1} + \cdots + y_1 (z - z_{r,s});
$$
in other words, 
$$
x_{d-1} \ = \ y_{d-1} - d x_d z_{r,s}, \  x_{d-2} \ = \ y_{d-2} - (d-1) y_{d-1} z_{r,s} + {d\choose{2}} x_d \, z_{r,s}^2, \ etc...
$$
and generally, for $1\le k \le d-1$,
\begin{equation}\label{x-y}
x_{d-k} \  = \ y_{d-k} \ + \ \sum_{\ell = 1}^k (-1)^{\ell} {{d-k+\ell}\choose{\ell}} y_{d-k+\ell} \,  z_{r,s}^{\ell}.
\end{equation}
Here we write $x_d = y_d$ for convenience.
Hence the linear part of $T_{x_d}^{r,s}$ is upper triangular with $1$'s down the diagonal and so ${\rm det} T_{x_d}^{r,s} = 1$. 
In particular $T_{x_d}^{r,s}$ is a bijection.

For each ${\underline{x}'} \in {R'}_m^{r,s}(x_d)$, we see that $x_{d-1} \in {\mathbb D}_{(c_1 Q_m)^{d-1}}(-d x_d z_{r,s})$
and these discs are disjoint as we vary over $(r,s) \in {\mathcal P}_m$. Indeed if $(r,s), (r',s') \in {\mathcal P}_m$ are two distinct elements,
then $|z_{r,s} - z_{r',s'}| \ge Q_{m}^{-1}$ and so the centres of these discs
$$
|d x_d z_{r,s} - d x_d z_{r'.s'}| \ = \ d |x_d| |z_{r,s} - z_{r',s'}| \ \ge \ Q_m^{d} Q_m^{-1} \ = \ Q_m^{d-1}
$$
are separated by more than twice the radius $(c_1 Q_m)^{d-1}$ if $c_1$ is small. This shows that the
sets $\{{R'}_m^{r,s}(x_d)\}_{(r,s)\in {\mathcal P}_m}$ are pairwise disjoint for each $m$ and each $x_d \in E_m$.

Hence
$$
{\mathcal I} \ \ge \ \sum_{m\ge m_0} \int_{R_m} |I({\underline{x}})|^q \, d{\underline{x}} \ = \ 
\sum_{m\ge m_0} \sum_{(r,s)\in {\mathcal P}_m} \int_{x_d \in E_m}
 \int_{{\underline{x}'} \in {R'}_m^{r,s}(x_d)} |I({\underline{x}})|^q \, 
d{\underline{x}}.
$$

We perform the change of variables ${\underline{x}'} = T_{x_d}^{r,s} {\underline{y}'}$ in the inner integral to write
$$
\int_{{R'}_m^{r,s}(x_d)} |I({\underline{x}})|^q \, d{\underline{x}'} \ = \ 
\int_{|y_{d-1}|\le (c_1 Q_m)^{d-1}} \cdots \int_{|y_1|\le c_1 Q_m} |II_{r,s}({\underline{y}})|^q \, d{\underline{y}'}
$$
where
$$
II_{r,s}({\underline{y}}) \ = \ \int_{{\mathbb C}} e(y_d(z-z_{r,s})^d + \cdots + y_1(z-z_{r,s})) \, \phi(z) \, dz.
$$
Recall we are writing $y_d = x_d$ for convenience.

We make the change of variables $z \to z - z_{r,s}$ in $II_{r,s}$ and write
$$
II_{r,s}({\underline{y}}) \ = \ \int_{{\mathbb C}} e(y_d z^d + \cdots + y_1z) \, \phi(z + z_{r,s}) \, dz \ =: \ 
\int_{\mathbb C} e(P(z; {\underline{y}})) \, \phi(z+z_{r,s}) \, dz.
$$
Fix a nonnegative $\psi \in C^{\infty}_c({\mathbb D}_2)$ with $\psi \equiv 1$ on ${\mathbb D}$ and define
$\psi_m(z) := \psi((Q_m/a) z)$ where $a = a_d$ is a large constant. We split 
$II_{r,s}({\underline{y}}) = II_{r,s}^1({\underline{y}}) + II_{r,s}^2({\underline{y}})$ where
$$
II_{r,s}^1({\underline{y}}) \ = \ \int_{\mathbb C} e(P(z;{\underline{y}})) \, \phi(z + z_{r,s}) \psi_m(z) \, dz
$$
and
$$
II_{r,s}^2({\underline{y}}) \ = \ \int_{\mathbb C} e(P(z;{\underline{y}})) \, \phi(z + z_{r,s}) (1-\psi_m(z)) \, dz .
$$
Our goal is to prove the following:
\begin{equation}\label{y} 
{\rm for} \  {\underline{y}} \ {\rm satisfying} \ \ \ 
y_d \in E_m, \ \ |y_{d-1}| \le (c_1 Q_m)^{d-1}, \ \ldots, \  |y_1| \le c_1 Q_m,
\end{equation}
\begin{equation}\label{aim}
|II_{r,s}^1({\underline{y}})| \ge \ B  \, Q_m^{-2} \ \ \ {\rm and} \ \ \ |II_{r,s}^2({\underline{y}})| \ \le \ (B/2) \,  Q_m^{-2}
\end{equation}
for some $B = B_d > 0$.

The measure of those ${\underline{y}}$ satisfying \eqref{y} is $C Q_m^{2[d + (d-1) + \cdots + 1]}$
or $C Q_m^{2[ 0.5(d^2 + d)]}$. So if \eqref{aim} holds, then
$$
{\mathcal I} \ \gtrsim \ \sum_{m\ge m_0} \sum_{(r,s) \in {\mathcal P}_m} Q_m^{-2q}  Q_m^{2[0.5(d^2 + d)]} \ \gtrsim \
\sum_{m\ge m_0} Q_m^{-2 [q - 0.5(d^2 + d) - 1]}
$$
which shows that ${\mathcal I} = \infty$ if $q \le q_d$, establishing the necessity of Proposition \ref{ACK-intro}.

We first establish the bound for $II_{r,s}^2$ in \eqref{aim}. Set 
$$
\rho(z) \ = \ \rho_{m,r,s}(z) \ := \ \phi(z + z_{r,s}) (1 - \psi_m(z)).
$$
For $z \in {\rm supp}(\rho)$ and ${\underline{y}}$ satisfying \eqref{y}, we have 
$$
|P^{(d-1)}(z)| \ = \ |d! y_d z + (d-1)! y_{d-1}| \ \ge \ d! \bigl[ a |y_d| Q_m^{-1} -  (c_1 Q_m)^{d-1} \bigr] \ \ge a \, Q_m^{d-1}
$$
since $a$ is large and $c_1$ is small. Therefore
\begin{equation}\label{H-aim}
H_P \ = \ \inf_{z \in {\rm supp}(\phi)} \max(|P^{(d)}(z)|^{1/d}, \ldots, |P'(z)|) \ \ge \ a^{1/(d-1)} \, Q_m
\end{equation}
and and so we are in a position
to apply Proposition \ref{vc-complex} and conclude
$$
|II_{r,s}^2({\underline{y}})| \ \le \ C_{d,\rho} \, H_d^{-2} \ \le \ C_{d,\rho} \, a^{-2/(d-1)} \, Q_m^{-2} 
$$
for ${\underline{y}}$ satisfying \eqref{y}.
Since $\rho$ depends on $m$ and $r,s$, we need to understand how $C_{d,\rho}$ depends on $\rho$. 
We apply \eqref{phi-dependence-again-again} from Proposition \ref{vc-complex} to conclude
$$
C_{d, \rho} \ \le \ C_d \bigl[ 1 + J^{-k} \|\rho\|_{C^k}\bigr]
$$
for some large $k = k(d)$. Here $J \ge a^{1/(d-1)} Q_m$ and $\|\rho\|_{C^k} \lesssim_d [Q_m/a]^k$. Therefore
$C_{d,\rho} \le C_d \, a^{-k d/(d-1)}$ implying
\begin{equation}\label{II2-aim}
|II_{r,s}^2({\underline{y}})| \ \le \ C_d \, a^{-2/(d-1)} \, Q_m^{-2} \ \ {\rm for} \ {\underline{y}} \ {\rm satisfying} \ \eqref{y}.
\end{equation}
 We note here that we could have considered $|P^{(d)}(z)| \equiv d! |x_d| \ge d! Q_m^d$ and bounded $H_P$ in \eqref{H-aim}
below by $(d!)^{1/d} \, Q_m$.  But $(d!)^{1/d} \le d$ and we would not have gained the very 
large constant $a^{1/(d-1)}$ in \eqref{H-aim}
and hence the small constant in \eqref{II2-aim}.

We now turn to $II_{r,s}^1({\underline{y}})$ when ${\underline{y}}$ satisfies \eqref{y}. First we note
that $\phi(z+ z_{r,s}) \psi_m(z) = \psi_m(z)$ since when $\psi_m(z) \not= 0$, then $|z| \le 2 a Q_m^{-1}$ and 
so $z + z_{r,s} \in {\mathbb D}_{1/2}$, implying $\phi(z + z_{r,s}) = 1$.
We can therefore write
$$
II_{r,s}^1({\underline{y}}) \ = \ \int_{\mathbb C} e(y_d z^d) \, \psi_m(z) \, dz \ + \ E
$$
where 
$$
|E| =  \Big|\int_{\mathbb C} \bigl[e( P(z; {\underline{y}})) - e(y_d z^d) \bigr]  \psi_m(z) \, dz\Bigr| \ \le \
\int_{\{|z| \le 2 a Q_m^{-1}\}} |P(z; {\underline{y}}) - y_d z^d | \, dz  .
$$
For $|z| \le 2 a Q_m^{-1}$ we have
$$
|P(z;{\underline{y}}) - y_d z^d| \ \le \ \sum_{j=1}^{d-1} |y_j z^j| \ \le \
\sum_{j=1}^{d-1} (c_1 Q_m)^j (2 a Q_{m}^{-1})^j  \ \le \ 2 d \, c_1 a
$$
if we chose $a$ and $c_1$ so that $2 c_1 a \le 1$.
Therefore $|E| \lesssim_d (c_1 a) (2a Q_m^{-1})^2 \lesssim_d a^3 c_1 Q_m^{-2}$. 
This implies
\begin{equation}\label{II1-error}
|II_{r,s}^1({\underline{y}})| \ \ge \ \Bigl| \int_{\mathbb C} e(y_d z^d)  \, \psi_m(z) \, dz \Bigr| \ - \ C_d a^3 c_1 \, Q_m^{-2},
\end{equation}
leaving us to analyse the main term
$$
II_{r,s}^{\rm main}(y_d) \ := \ \int_{\mathbb C} e ( y_d z^d ) \, \psi_m(z) \, dz  \ = \ \int_{\mathbb C} e( y_d z^d) \, \psi((Q_m/a) z) \, dz.
$$

Finally we write
$$
II_{r,s}^{\rm main}(y_d) \ = \ \int_{\mathbb C} e (y_d z^d) \, dz \ + \ \int_{\mathbb C} e(y_d z^d) \bigl[ 1 - \psi((Q_m/a) z) \bigr] \, dz
\ =: \ A + B
$$
where these two improper integrals are interpreted as limits of truncated integrals. For example, for $B$
we mean $B = \lim_{R\to\infty} B_R$ where
$$
B_R \ := \ \int_{\mathbb C} e( y_d z^d) \, \bigl[ 1 - \psi((Q_m/a) z) \bigr] \, \Psi(R^{-1} z) \, dz
$$
and $\Psi \in C^{\infty}_c ({\mathbb D})$ with $\Psi(z) \equiv 1$ on ${\mathbb D}_{1/2}$. It is fairly straightforward
to see that these limits exist (in fact, the analysis below will show this as a consequence). To bound $|B|$ from above,
it suffices to bound $|B_R|$ from above, uniformly in $R$. 

We change variable $w = R^{-1} z$ to write
$$
B_R =  R^2 \, \int_{\mathbb C} e( y_d R^d w^d) \, \Psi(w) \bigl[ 1 - \psi((Q_m R/a) w) \bigr] \, dw  =: 
\int_{\mathbb C} e( y_d R^d w^d) \, \rho(w) \, dw.
$$
We will apply Proposition \ref{vc-complex} to $B_R$ with $P(w) = y_d R^d w^d$. Note that
$$
H_P \ \ge \ \inf_{w \in {\rm supp}(\rho)} |P'(w)| \ \ge \ d |y_d| R^d (a/(R Q_m))^{d-1} \ \ge \ d a^{d-1} R Q_m .
$$
Hence
$$
|B_R| \ \ \lesssim_d  \ \ a^{-2(d-1)} \bigl[ 1 + J^{-k} \|\rho\|_{C^k}\bigr] \ Q_m^{-2}
$$
where $\|\rho\|_{C^k} \lesssim (R Q_m/a)^k$ and 
$J = $
$$
\inf_{w \in {\rm supp}(\rho)} \max (|P^{(d)}(w)|^{1/d}, \ldots, |P''(w)|^{1/2} ) \ge  \sqrt{d(d-1)} (RQ_m)^{d/2} ((a/R Q_m))^{(d-2)/2},
$$
implying
$$
J \ \ge \ \sqrt{d(d-1)} a^{(d-2)/2} R Q_m.
$$
Therefore $|B_R| \lesssim_d  a^{-2(d-1)} \, Q_m^{-2}$ and so
\begin{equation}\label{B}
|B| \ \lesssim_d \ a^{-2(d-1)} \ Q_m^{-2}.
\end{equation}

It remains to treat the first integral
$$
A \ = \ \int_{\mathbb C} e(y_d z^d) \, dz
$$
of the main term $II_{r,s}^{\rm main}({\underline{y}})$ when $y_d \in E_m$. Fix $y_d = r e^{i\theta} \in E_m$ 
so that $r = |y_d|$.

We make the change of variables $w = u z$ where $u = r^{1/d} e^{i \theta/d}$ so that $w^d = y_d z^d$.
The jacobian of this change of variables is $r^{2/d} = |y_d|^{2/d}$ and so
$$
A \ = \  c_d \, \frac{1}{|y_d|^{2/d}} \ \ \ {\rm where} \ \ \ c_d \ := \ \int_{\mathbb C} e(w^d) \, dw
$$
is nonzero. This implies $|A| \ge |c_d| Q_m^{-2}$. This bound, together with the bound for $B$ in \eqref{B}, implies
\begin{equation}\label{main-II}
|II_{r,s}^{\rm main}({\underline{y}})| \ \ge \ (|c_d|/2) \, Q_m^{-2}
\end{equation}
if we choose the constant $a = a_d$ large enough. 

Putting \eqref{main-II} together with \eqref{II2-aim} and \eqref{II1-error} (choosing first the constant $a$ large and then
choosing the $c_1$ small so that $c_1 a^3 \ll 1$) establishes the desired bound \eqref{aim}, completing the necessity
part and hence the proof Proposition \ref{ACK-intro}.

\section{Appendix: proofs of Propositions \ref{vc-sublevel-first-derivative} and \ref{vc-complex-sublevel}}\label{appendix}

In this appendix we give the proofs of Proposition \ref{vc-sublevel-first-derivative} and Proposition \ref{vc-complex-sublevel}
which rely on Lemma \ref{hensel-L}.

First let us see how Lemma \ref{hensel-L} implies Proposition \ref{vc-sublevel-first-derivative}. 
In this case we have $|f'(z)|\ge 1$ on ${\mathbb D}$ and we will apply
Lemma \ref{hensel-L} with $L=1$ to
$$
\phi(z) \ := \ f(z) \ \ \ {\rm and} \ \ \  z_0 \in \{z\in {\mathbb D}: |f(z)| \le \epsilon\}.
$$
Note that
$\delta = |\phi(z_0) \phi'(z_0)^{-2}| \le \epsilon$ and since $\delta M \le \epsilon M < 1/64$,
we see that there exists a $z_{*} \in {\mathbb D}_{5/4}$ 
with $f(z_{*}) = 0$ and $|z_0 - z_{*}| \le 2 |\phi(z_0)\phi'(z_0)^{-1}| \le 2 \epsilon$. Hence
\begin{equation}\label{n=1}
\{z\in {\mathbb D}: |f(z)| \le \epsilon\} \ \subseteq \ \bigcup_{z_{*} \in {\mathcal Z}_0} {\mathbb D}_{2 \epsilon}(z_{*})
\end{equation}
where ${\mathcal Z}_0 := \{ z \in {\mathbb D}_{5/4} : f(z) = 0 \}$. This completes the proof
of Proposition \ref{vc-sublevel-first-derivative}.

We turn to the proof of Proposition \ref{vc-complex-sublevel} and we begin with the case $k=2$ so that $|f''(z)|\ge 1$ 
on ${\mathbb D}$. We
split the sublevel set $S := \{z\in {\mathbb D}: |f(z)| \le \epsilon\} = S_1 \cup S_2$ into two sets where
$$
S_1 \ := \ \{z \in S : |f'(z)| \le A \sqrt{\epsilon} \} \ \ {\rm and} \ \ 
S_2 \ := \ \{z\in S: |f'(z)| > A \sqrt{\epsilon}\}
$$
where $A = c_0 \sqrt{M}$ for some large but absolute $c_0\gg 1$. For $S_1$, we apply 
Lemma \ref{hensel-L} with $L=1$ to 
$\phi(z) := f'(z)$ and $z_0 \in S_1$. Note that
$\delta = |\phi(z_0) \phi'(z_0)^{-2}| \le A \sqrt{\epsilon}$ and $\delta M \le c_0 M^{3/2}\sqrt{\epsilon}  < 1/64$
since $\epsilon \ll M^{-3}$. Hence
we see that there exists a $z_{*}\in {\mathbb D}_{5/4}$ 
with $f'(z_{*}) = 0$ and $|z_0 - z_{*}| \le 2 |\phi(z_0)\phi'(z_0)^{-1}| \le 2 A \sqrt{\epsilon}$ and so
\begin{equation}\label{n=2,S1}
S_1  \ \subseteq \ \bigcup_{z_{*} \in {\mathcal Z}_1} {\mathbb D}_{2 A \sqrt{\epsilon}}(z_{*})
\end{equation}
where ${\mathcal Z}_1 := \{ z \in {\mathbb D}_{5/4} : f'(z) = 0 \}$. 

For $S_2$, we apply
Lemma \ref{hensel-L} with $L=1$ to $\phi(z) := f(z)$ and $z_0 \in S_2$.
Note that
$\delta = |\phi(z_0) \phi'(z_0)^{-2}| \le A^{-2}$ and $\delta M \le (c_0)^{-1}  < 1/64$
since $c_0 \gg 1$ is large. Hence
we see that there exists a $z_{*} \in {\mathbb D}_{5/4}$ 
with $f(z_{*}) = 0$ and $|z_0 - z_{*}| \le 2 |\phi(z_0)\phi'(z_0)^{-1}| \le 2 A^{-1} \sqrt{\epsilon}$ and so
\begin{equation}\label{n=2,S2}
S_2  \ \subseteq \ \bigcup_{z_{*} \in {\mathcal Z}_0} {\mathbb D}_{\sqrt{\epsilon}}(z_{*}),
\end{equation}
completing the proof of Proposition \ref{vc-complex-sublevel} when $k=2$.

For $k\ge 3$, we
split the sublevel set $S := \{z\in {\mathbb D}: |f(z)| \le \epsilon\} = S_1 \cup \cdots \cup S_k$ into $k$ sets where
$$
S_1 \ := \ \{z \in S : |f^{(k-1)}(z)| \le A \epsilon^{1/k} \}, \ \ {\rm and}
$$
$$
S_2 \ := \ \{z\in S: |f^{(k-1)}(z)| > A \epsilon^{1/k}, \ |f^{(k-2)}(z)|\le d_2 
\epsilon^{1/k}|f^{(k-1)}(z)| \}.
$$
The remaining sets $S_j,  \,3\le j \le n,$ are defined with respect to a series of inequalities 
$$
{\mathcal I}_j(z)  :  \ \ \ \ |f^{(k-j+1)}(z)| \ > \ 
d_{j-1} \epsilon^{1/k} |f^{(k-j+2)}(z)| \ > \ \cdots \ > 
$$
$$
(d_{j-1}\cdots d_2) \epsilon^{(j-2)/k}
|f^{(k-1)}(z)| >  A (d_{j-1} \cdots d_2) \epsilon^{(j-1)/k} 
$$
being satisfied. For $3\le j \le k-1$, we define
$$
S_j  \ := \ \bigl\{z\in S: {\mathcal I}_j(z) \ {\rm is \ satisfied}, \ |f^{(k-j)}(z)| \le d_j \epsilon^{1/k} |f^{(k-j+1)}(z)| \bigr\}.
$$
Here $d_2 = M^{-1/k}$ and $d_j = \eta d_{j-1}, 3\le j \le k-1$ for an appropriate small $\eta = \eta_k > 0$. Also $A = c_0 M^{(k-1)/k}$
where $c_0 \gg 1$ will be chosen large enough, depending on $\eta$ (and $k$). Finally we have
$$
S_k \ = \ \bigl\{ z \in S : {\mathcal I}_k \ {\rm is \ satisfied} \bigr\}.
$$
Of course we see that the definition of $S_2$ is incorporated in the definitions of $S_j$ above when $j=2$ if we 
interpret the product $d_{j-1} \cdots d_2$ as the empty product when $j=2$ and so equal to 1. 

We will apply the Lemma \ref{hensel-L} with various choices of $L$ to various derivatives $\phi(z) = f^{(j)}(z)$. 
For $z \in {\mathbb D}_{7/4}$, Cauchy's integral
formula gives us
$$
\phi(z) \ = \ f^{j)}(z) \ = \frac{j!}{2\pi i} \int_{C_{1/4}(z)} \frac{f(w)}{(w - z)^{j+1}} \, dw
$$
where $C_r(z) = \{w : |w-z| = r\}$ denotes the circle of radius $r$ centred at $z$. Hence
$M_j := M_{\phi} = \sup_{z\in {\mathbb D}_{7/4}}  |f^{(j)}(z)| \le j! 4^j M$.

For $z_0 \in S_1$, we apply Lemma \ref{hensel-L} with $L=1$ to $\phi(z) = f^{(k-1)}(z)$ and $M_{k-1} \le 4^{k-1} (k-1)! M$.
Here 
$$
\delta \ = \ |\phi(z_0) \phi'(z_0)^{-2}| \ = \ |f^{(k-1)}(z_0) f^{(k)}(z_0)^{-2}| \ \le \ A \epsilon^{1/k}
$$ 
and so 
$$
\delta M_{k-1} \ \lesssim_k \ A M \epsilon^{1/k} \ \ll_k \ M^{(k-1)/k} M \epsilon^{1/k} \ \ll_k \ 1
$$
since $\epsilon \ll_k M^{-(2k -1)}$. Since $2 |\phi(z_0) \phi'(z_0)^{-1}| \le 2 A \epsilon^{1/k}$, we find
a zero $z_{*}$ of $\phi = f^{(k-1)}$ in ${\mathbb D}_{5/4}$ such that $|z_{*} - z_0| \le  2A \epsilon^{1/k}$ and hence
\begin{equation}\label{S1}
S_1 \ \subseteq \ \bigcup_{z_{*}\in {\mathcal Z}_{k-1}} {\mathbb D}_{2 A \epsilon^{1/k}} (z_{*})
\end{equation}
where ${\mathcal Z}_{k-1} := \{ z\in {\mathbb D}_{5/4} : f^{(k-1)}(z) = 0 \}$. In general we define
$$
{\mathcal Z}_{j} \ := \ \bigl\{ z\in {\mathbb D}_{5/4} : f^{(j)}(z) = 0 \bigr\}.
$$

Now for $z_0 \in S_2$, we apply Lemma \ref{hensel-L} with $L=1$ to $\phi(z) = f^{(k-2)}(z)$ and $M_{k-2} \lesssim_k M$. Here
$$
\delta \ = \ |\phi(z_0) \phi'(z_0)^{-2}| \ = \ |f^{(k-2)}(z_0) f^{(k-1)}(z_0)^{-2}| \ \le \ d_2 A^{-1} \ = \ c_0^{-1} M^{-1}
$$ 
and so 
$$
\delta M_{k-2} \ \lesssim_k \ c_0^{-1}  M^{-1} M  \ \ll_k \ 1
$$
since $c_0 \gg 1$.  Since $2 |\phi(z_0) \phi'(z_0)^{-1}| \le 2 d_2 \epsilon^{1/k}$, we find
a zero $z_{*}$ of $\phi = f^{(k-2)}$ in ${\mathbb D}_{5/4}$ such that $|z_{*} - z_0| \le  2 d_2 \epsilon^{1/k}$ and hence
\begin{equation}\label{S2}
S_2 \ \subseteq \ \bigcup_{z_{*}\in {\mathcal Z}_{k-2}} {\mathbb D}_{2 d_2 \epsilon^{1/k}} (z_{*}).
\end{equation}

Next consider $z_0 \in S_j$ for $3\le j \le k$. Here we will apply Lemma \ref{hensel-L} to $\phi(z) = f^{(k-j)}(z)$
with $L= k-1$
and $M_{k-j} \lesssim_k M$. Hence
$$
\delta \ = \ |\phi(z_0) \phi'(z_0)^{-1}\phi^{(L)}(z_0)^{-1}| \ = \ |f^{(k-j)}(z_0) f^{(k-j+1)}(z_0)^{-1}f^{(k-1)}(z_0)^{-1}| 
$$
$$
\ \le \ d_j \epsilon^{1/k}  |f^{(k-1)}(z_0)^{-1}| \ \le \ d_j \epsilon^{1/k} A^{-1} \epsilon^{-1/k} \ = \ \eta^{j-2} M^{-1/k} c_0^{-1}
M^{-(k-1)/k}
$$ 
and so 
$$
\delta M_{k-j} \ \lesssim_k \ \eta^{j-2} c_0^{-1}  M^{-1} M  \ \ll_k \ 1.
$$
Also 
$$
\delta_1 = |\phi(z_0) \phi''(z_0) \phi'(z_0)^{-2}| = |f^{(k-j)}(z_0) f^{(k-j+2)}(z_0) f^{(k-j+1)}(z_0)^{-2}| \le d_j/d_{j-1} = \eta
$$
and so $\delta M_{k-j}, \, \delta_1 \ll_k 1$. These bounds need to modified in the case $j=k$; here 
$$
\delta \ = \ |f(z_0) f'(z_0)^{-1} f^{(k-1)}(z_0)^{-1}| \ \le \ \epsilon \, \frac{1}{A D_k \epsilon^{(k-1)/k}}  
\frac{\epsilon^{(k-2)/k}}{A \epsilon^{(k-1)/k}} = \frac{1}{A^2 D_k}
$$
where $D_k = d_{k-1} \cdots d_2$. Hence (for some $N = N_k \in {\mathbb N}$ which can be computed)
$$
\delta M \ \le \ c_0^{-1} \eta^{-N_k} M^{-2(k-1)/k} M^{(k-2)/k} M \ = \ c_0^{-1} \eta^{-N_k} \ \ll_k \ 1
$$
since we choose $c_0$ large depending on $\eta$. Also
$$
\delta_1 = |f(z_0) f''(z_0) f'(z_0)^{-2}| \le \epsilon  \, \frac{1}{d_{k-1} \epsilon^{1/k}} \frac{1}{ A D_k \epsilon^{(k-1)/k}} \ = \
c_0^{-1} \eta^{-N_k} \ \ll_k \ 1.
$$
For $2\le r \le j-2$, we have
$$
\delta_r \ = \ |f^{(k-j)}(z_0) f^{k-j+1}(z_0)^{-1} f^{(k-j+r+1)}(z_0) f^{(k-j+r)}(z_0)^{-1}| 
$$
$$
\le \ d_j \, \epsilon^{1/k} \
\frac{\epsilon^{(r-1)/k} (d_{j-1} \cdots d_{j-r+1})}{\epsilon^{r/k} (d_{j-1}\cdots d_{j-r})} \ = \ 
d_j d_{j-r}^{-1} \ = \ \eta^r  \ \lesssim_k \ 1.
$$
Finally we see that for $j\le k-1$,
$$
|\phi(z_0) \phi'(z_0)^{-1}| \ = \ |f^{(k-j)}(z_0) f^{(k-j+1)}(z_0)^{-1}| \ \le \ d_j \epsilon^{1/k} \ \le \ 1/8
$$
and so Lemma \ref{hensel-L} implies there is a zero $z_{*}$ of $f^{(k-j)}$ in ${\mathbb D}_{5/4}$ such
that $|z _{*}- z_0| \le 2 |\phi(z_0)\phi'(z_0)^{-1}| \le 2 d_j \epsilon^{1/k}$. Hence for $3\le j < k$,
\begin{equation}\label{Sk}
S_j \ \subseteq \ \bigcup_{z_{*}\in {\mathcal Z}_{k-j}} {\mathbb D}_{2 d_j \epsilon^{1/k}} (z_{*}).
\end{equation}
For $j=k$,
$$
|\phi(z_0) \phi'(z_0)^{-1}| \ = \ |f(z_0) f'(z_0)^{-1}| \ \le \ \epsilon \, \frac{1}{A D_k \epsilon^{(k-1)/k}}  \ = \
c_0^{-1} \eta^{-N_k} M^{-1/k} \, \epsilon^{1/k}
$$
and so Lemma \ref{hensel-L} implies there is a zero $z_{*}$ of $f$ in ${\mathbb D}_{5/4}$ such
that $|z_{*} - z_0| \le 2 |f(z_0)f'(z_0)^{-1}| \lesssim_k  \epsilon^{1/k}$. Hence 
\begin{equation}\label{Sn}
S_k \ \subseteq \ \bigcup_{z_{*}\in {\mathcal Z}_{0}} {\mathbb D}_{b_k \epsilon^{1/k}} (z_{*}).
\end{equation}

Hence \eqref{n=1}, \eqref{n=2,S1}, \eqref{n=2,S2},\eqref{S1}, \eqref{S2}, \eqref{Sk} and \eqref{Sn} imply the desired bound
\eqref{sublevel-vc}, completing the proof of Proposition \ref{vc-complex-sublevel}.

\end{document}